\newcolumntype{L}{>{$}l<{$}}
\newcolumntype{C}{>{$}c<{$}}
\newtheorem{thm}{Theorem}[section]
\newtheorem{theorem}[thm]{Theorem}
\newtheorem{corollary}[thm]{Corollary}
\newtheorem{lemma}[thm]{Lemma}
\newtheorem{proposition}[thm]{Proposition}
\newtheorem{conjecture}[thm]{Conjecture}
\theoremstyle{definition}
\newtheorem{definition}[thm]{Definition}
\newtheorem{question}[thm]{Question}
\theoremstyle{remark}
\newtheorem{remark}[thm]{Remark}
\newtheorem{example}[thm]{Example}
\newcommand{\td}{\widetilde}
\newcommand{\mc}{\mathcal}
\renewcommand{\theta}{\vartheta}
\newcommand{\Gabrov}{Gabrov\v{s}ek}
\newcommand{\del}{\partial}
\DeclareMathOperator{\supp}{supp}
\DeclareMathOperator{\sign}{sgn}
\newcommand{\Z}{\mathbb{Z}}
\newcommand{\Q}{\mathbb{Q}}
\newcommand{\R}{\mathbb{R}}
\newcommand{\F}{\mathbb{F}}
\newcommand{\RP}{\mathbb{RP}}
\newcommand{\tK}{\widetilde{K}}
\newcommand{\tL}{\widetilde{L}}
\newcommand{\tD}{\widetilde{D}}
\newcommand{\tW}{\widetilde{W}}
\newcommand{\tU}{\widetilde{U}}
\def\flip{\operatorname{flip}}
\newcommand{\ov}{\overline{V}}
\newcommand{\oone}{\overline{1}}
\newcommand{\ox}{\overline{X}}
\newcommand{\oa}{\overline{a}}
\newcommand{\ob}{\overline{b}}
\newcommand{\oo}{\overline{o}}
\newcommand{\gequiv}{g_{4}^{\operatorname{se}}}
\def\tSigma{\widetilde{\Sigma}}
\DeclareMathOperator{\Kh}{Kh}
\newcommand{\LH}{\mathit{LH}}
\newcommand{\s}{\mathfrak{s}}
\newcommand{\q}{\mathrm{q}} %quantum filtration level
\newcommand{\smin}{s_{\mathrm{min}}}
\newcommand{\bo}{\bar{o}} 
\def\Kh{\mathit{Kh}}
\def\KC{\mathit{KC}}
\newcommand{\cube}{\underline{2}}
\newcommand{\Leegen}{\mathfrak{s}} %Lee generator in chain group
\newcommand{\LeegenC}{\Leegen^{\mc{C}}} %Including notation of choice of dividing circle
\newcommand{\LeegenCi}[1][i]{\Leegen^{\mc{C}_{#1}}}
\newcommand{\LCC}{LC^*_{\mc{C}}}
\newcommand{\LCCi}[1][i]{LC^*_{\mc{C}_{#1}}}
\newcommand{\LHCi}[1][i]{LH^*_{\mc{C}_{#1}}}
\newcommand{\unC}{\td{C}}
\newcommand{\ILtikzpic}[2][]{
	\vcenter{\hbox{\begin{tikzpicture}[#1]
				#2
	\end{tikzpicture}}}
}
\newcommand{\drawover}[2][thick]{
	\draw[line width=2mm,white] #2
	\draw[#1] #2
}
\begin{document}

\title[A Rasmussen invariant for links in $\RP^3$]{A Rasmussen invariant for links in $\RP^3$}%

\author[Ciprian Manolescu]{Ciprian Manolescu}
\address {Department of Mathematics, Stanford University\\
Stanford, CA 94305, United States of America}
\email {\href{mailto:cm5@stanford.edu}{cm5@stanford.edu}}

\author[Michael Willis]{Michael Willis}
\address{Department of Mathematics, Texas A\&M University\\
College Station, TX 77840, United States of America}
\email{\href{mailto:msw188@tamu.edu}{msw188@tamu.edu}}

%\subjclass[2020]{}%
%\keywords{}
%\date{}%

\begin{abstract}
Asaeda-Przytycki-Sikora, Manturov, and Gabrov\v{s}ek extended Khovanov homology to links in $\RP^3$. We construct a Lee-type deformation of their theory, and use it to define an analogue of Rasmussen's $s$-invariant in this setting. We show that the $s$-invariant gives constraints on the genera of link cobordisms in the cylinder $I \times \RP^3$. As an application, we give examples of freely $2$-periodic knots in $S^3$ that are concordant but not standardly equivariantly concordant.
\end{abstract}

\maketitle
% ----------------------------------------------------------------
\section{Introduction}
In \cite{Rasmussen}, Rasmussen used Lee's deformation of Khovanov homology \cite{Kh, Lee} to define his $s$-invariant for links in $S^3$, which he employed to give a combinatorial proof of Milnor's conjecture. Rasmussen's invariant later found several other topological applications, such as Piccirillo's proof that the Conway knot is not slice \cite{Pic}. 

Khovanov homology was originally defined for links in $S^3$. Various extensions to links in other $3$-manifolds have been proposed; see \cite{Roz, Willis, APS, Gabrovsek, MWW}. Rasmussen's invariant was extended to links in $S^1 \times D^2$ in \cite{GLW}, to virtual knots in \cite{DKK, Rushworth}, and to links in connected sums of $S^1 \times S^2$ in \cite{MMSW}. In fact, Rasmussen's $s$-invariant is similar to the concordance invariant $\tau$ from knot Floer homology \cite{OStau, RasmussenThesis}, which can be defined for null-homologous knots in arbitrary $3$-manifolds. Giving a definition of the $s$-invariant in this general context remains an open problem.

In this paper we construct an $s$-invariant for links in the real projective space $\RP^3$. Our work builds on that of Asaeda-Przytycki-Sikora \cite{APS}, Manturov \cite{Manturov}, and Gabrov\v{s}ek \cite{Gabrovsek}, who defined a Khovanov-type homology for links $L \subset \RP^3$. We denote their invariant by $\Kh(L)$. (In \cite{APS} this was defined with coefficients in $\F_2$, and in \cite{Manturov, Gabrovsek} it was refined to $\Z$ coefficients.) The basic idea for defining $\Kh(L)$ is to represent $\RP^3 \setminus \text{pt}$ as an $I$-bundle over $\RP^2$, and project $L$ to $\RP^2$; the resulting diagram in $\RP^2$ is then used to build a Khovanov-type complex. Let us also note that, just as the Euler characteristic of Khovanov homology is the Jones polynomial, the Euler characteristic of the (suitably normalized) homology from \cite{APS, Manturov, Gabrovsek} is Drobotukhina's invariant from \cite{Drobo}, an analogue of the Jones polynomial for links in $\RP^3$. 

Working with coefficients in $\Q$, we will describe a Lee deformation $\LH^*$ of the Khovanov homology of links in $\RP^3$. By analogy with the calculation of Lee homology in $S^3$ in \cite{Lee}, we show the following.
\begin{theorem}
\label{thm:Lee}
Let $L \subset \RP^3$ be a link, and let $O(L)$ be the set of orientations of $L$. Then the Lee homology of $L$ is given by
$$ \LH^*(L) \cong \Q^{O(L)},$$
being generated by cycles $\s_o$, one for each $o \in O(L)$.
\end{theorem}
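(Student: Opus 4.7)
My plan is to mirror Lee's original proof from \cite{Lee}: for each orientation $o \in O(L)$, construct an explicit cycle $\s_o$ in the Lee chain complex supported at the oriented resolution vertex of the cube of resolutions, and then show that $\{\s_o\}_{o \in O(L)}$ descends to a basis of $\LH^*(L)$. The main new ingredient relative to the classical $S^3$ case will be the treatment of non-trivial circles in $\RP^2$ — simple closed curves representing the non-zero class in $\pi_1(\RP^2) = \Z/2$ — where the APS/Gabrov\v{s}ek TQFT assigns different modules and different merge/split operations than in the standard theory.

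\emph{Construction of $\s_o$.} Given $o$, let $D \subset \RP^2$ be a diagram of $L$ and let $D_o$ be its oriented resolution, a disjoint collection of simple closed curves in $\RP^2$. On each trivial circle I assign one of the classical Lee idempotents $\mathbf{a} = 1 + x$ or $\mathbf{b} = 1 - x$ in $V \otimes \Q \cong \Q[x]/(x^2 - 1)$, the choice being determined by the parity of crossings along a path from a basepoint to the circle; this is well-defined for trivial circles because they bound disks in $\RP^2$. On each non-trivial circle I assign the analogous Lee-canonical element of the Lee deformation of the module that the TQFT attaches to a non-trivial circle.

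\emph{Verification and basis.} To show that $\s_o$ is a cycle, I analyze every edge of the cube out of $D_o$: each such edge corresponds to switching one crossing of $D$ from its oriented resolution to the opposite resolution, producing a local merge or split. In the classical setting the $\mathbf{a}/\mathbf{b}$ labels are arranged so that at every merge edge the two circles being merged carry opposite labels (so $\mathbf{a} \cdot \mathbf{b} = 0$ in the Lee product kills the contribution), while the split contributions pair up and cancel across adjacent faces of the cube. I will verify that both cancellation mechanisms survive in $\RP^3$, paying particular attention to edges involving non-trivial circles. Linear independence of the $\s_o$ then follows from a standard $q$-filtration argument on their leading symbols in the underlying Khovanov complex. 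That the $\s_o$ span $\LH^*(L)$ follows from Lee's change-of-basis simplification of the complex: replace $\{1, x\}$ by $\{\mathbf{a}, \mathbf{b}\}$ at each cube vertex, and observe that the Lee differential decomposes into acyclic pieces plus one-dimensional summands concentrated at the oriented-resolution vertices, one per orientation.

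The main obstacle I anticipate is precisely the non-trivial-circle analysis: one needs explicit formulas for the Lee-deformed TQFT operations for merges and splits involving non-trivial circles, together with a direct check that the idempotent-type identities underlying Lee's cancellations continue to hold in the APS/Gabrov\v{s}ek setting. Once this local verification is completed, the global structure of the proof — cycle condition, filtration-based independence, and acyclic decomposition — transfers from Lee's argument essentially unchanged.
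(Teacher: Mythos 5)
Your high-level strategy — define cycles $\s_o$ at the oriented resolution vertex via an $a/b$-type labeling, show they are cycles, and prove they form a basis — is the same as the paper's. But two genuine gaps remain, and they are precisely where the $\RP^3$-specific technical content lives.

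First, you defer the one definition that actually matters: which Lee-canonical element ($\oa = \oone + \ox$ or $\ob = \oone - \ox$) gets placed on the homologically essential circle. Your parity-of-a-path convention is fine for trivial circles (since $[\gamma]\cdot[\text{trivial}]=0$ mod $2$ for any loop $\gamma$), but fails for the essential circle: a loop generating $\pi_1(\RP^2)$ intersects the essential circle an odd number of times, so the parity of crossings along a path from a basepoint to the essential circle is not well-defined. The paper's fix is the notion of a \emph{dividing circle} $\mc{C}$ (Definition~\ref{def:dividing circle}): an oriented essential circle (which must equal the essential circle of $D_o$ when one is present) whose complement is a disk, inducing a reference orientation $o_{\mc{C}}$ on all of $D_o$ by alternating with distance; the Lee label is then $a/\oa$ versus $b/\ob$ according to whether $o$ agrees with $o_{\mc{C}}$. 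Second, and entangled with the first: the Lee complex in this setting depends on auxiliary \emph{resolution choices} (an ordering and an orientation of the circles in every vertex $D_v$) that control the signs in the differential via rules (P), (O), (C). The cycle check only goes through because one uses the specific orientation $o_{\mc{C}}$ on $D_o$ as the resolution choice there (Proposition~\ref{prop:Lee gens survive in homology}), and one must then track how the generators transform under changes of these choices (Proposition~\ref{prop:Lee gens under change of div circ}). Your proposal does not engage with either sign issue. A smaller point: the cycle verification does not need split contributions to ``pair up and cancel across adjacent faces'' — the orientation $o$ on $D_o$ rules out any $1\to 2$ bifurcation out of $D_o$, so only $2\to 1$ merges and $1\to 1$ bifurcations (which are the zero map) appear, and each edge map vanishes on $\s_o$ individually. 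Finally, for spanning, the paper follows Lee's inductive long-exact-sequence argument; your proposed Bar-Natan-style direct-sum decomposition is a plausible alternative, but you would still need to check it in the presence of $1\to 1$ bifurcations and the nonstandard Frobenius-type operations for the essential circle.
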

The cycles $\s_o$ above depend on some auxiliary data; see Section \ref{sec:Lee} and Theorem \ref{thm:Lee homology of diagram} for more precise statements.

Given an orientation $o$ on a link $L \subset \RP^3$, let $\bo$ be the opposite orientation. We define the $s$-invariant of $L$ by 
$$ s(L) = \frac{ \q([\s_o+\s_{\bar{o}}]) + \q([\s_o-\s_{\bar{o}}])}{2},$$
where $\q$ denotes the quantum filtration. We will use this to study surface cobordisms in the cylinder $I \times \RP^3$. Given such a cobordism between links $L_0$ and $L_1$, we produce a filtered chain map between the Lee complexes, of a certain filtration degree. We then obtain a constraint on the topology of the cobordism in terms of the $s$-invariant. The result is entirely analogous to the one in the usual case (in $I \times S^3$); cf.  \cite{Rasmussen}, \cite{BW}.

\begin{theorem}
\label{thm:BW}
Let $\Sigma$ be an oriented, properly embedded smooth surface in $I \times \RP^3$, with $\del \Sigma = (\{0\} \times L_0) \cup (\{1\} \times L_1)$. Suppose that every component of $\Sigma$ has a boundary component in $L_0$. Then, we have
$$ s(L_1) - s(L_0) \geq \chi(\Sigma),$$
where $\chi$ denotes the Euler characteristic.
\end{theorem}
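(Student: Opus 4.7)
The plan is to adapt the classical argument of Rasmussen \cite{Rasmussen} and Beliakova-Wehrli \cite{BW} to the $\RP^3$ setting. First, I would decompose $\Sigma$ via a generic Morse function on the $I$-coordinate: away from finitely many critical times, the slice $L_t = \Sigma \cap (\{t\} \times \RP^3)$ is constant up to link isotopy in $\RP^3$ and so changes only by Reidemeister moves, including the $\RP^3$-specific ones catalogued in \cite{Gabrovsek}; at each isolated critical time one has a Morse move (birth, saddle, or death) occurring inside a small ball disjoint from the $\RP^2$ at infinity. Each elementary piece induces a chain map on the Lee complex, and their composite defines the cobordism map $\phi_\Sigma \colon \LH^*(L_0) \to \LH^*(L_1)$.

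The next step is to verify that $\phi_\Sigma$ is filtered of degree $\chi(\Sigma)$. The chain homotopy equivalences coming from Reidemeister-type moves should be filtered of degree $0$, just as in the $S^3$ case. Among Morse moves, a birth contributes filtration $+1$ (insertion of the unknot generator $1 \in \Q[x]/(x^2-1)$), a death contributes $+1$ (the counit $1 \mapsto 0$, $x \mapsto 1$), and a saddle contributes $-1$ (multiplication or comultiplication in the Frobenius algebra underlying the Lee deformation). Summing over the decomposition gives filtered degree $b_0 - s + b_2 = \chi(\Sigma)$, where $b_0$, $s$, $b_2$ denote the numbers of births, saddles, and deaths. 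Because all these moves happen in small balls, or are local changes in the $\RP^2$-diagram, each elementary map reduces to a local computation mirroring the $S^3$ case.

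The crucial step is to evaluate $\phi_\Sigma$ on the Lee generators from Theorem \ref{thm:Lee}. The orientation on $\Sigma$ induces compatible orientations $o_0$ on $L_0$ and $o_1$ on $L_1$. Move by move, I would check that $\phi_\Sigma(\s_{o_0}) = \pm \s_{o_1}$: saddles merge or split the oriented resolutions coherently, and births (respectively deaths) insert (resp.\ cap off) a Lee idempotent matching the orientation of the newborn (resp.\ dying) circle. This is the place where the hypothesis that every component of $\Sigma$ meets $L_0$ is essential: it rules out configurations (components with boundary only in $L_1$, or closed components) that would produce extra factors of $0$ or $2$, exactly as in \cite{BW}. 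Running the same argument with the orientation of $\Sigma$ reversed gives $\phi_\Sigma(\s_{\bar o_0}) = \pm \s_{\bar o_1}$.

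From these two identifications, $\phi_\Sigma$ sends the classes $[\s_{o_0} \pm \s_{\bar o_0}] \in \LH^*(L_0)$ to non-zero scalar multiples of the two classes $[\s_{o_1} \pm \s_{\bar o_1}] \in \LH^*(L_1)$ (possibly swapping the $\pm$). Combined with the filtration bound $\q(\phi_\Sigma([\xi])) \geq \q([\xi]) + \chi(\Sigma)$, summing the two resulting inequalities and averaging via
$$s(L) = \frac{\q([\s_o + \s_{\bar o}]) + \q([\s_o - \s_{\bar o}])}{2}$$
for both $L = L_0$ and $L = L_1$ yields $2 s(L_1) \geq 2 s(L_0) + 2\chi(\Sigma)$, which is the desired inequality. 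The main obstacle I anticipate is verifying the non-triviality step for the $\RP^3$-specific Reidemeister moves and for Morse moves occurring close to (though still disjoint from) the $\RP^2$ at infinity: one must revisit each move in \cite{APS, Manturov, Gabrovsek} and check that the induced filtered chain homotopy equivalence carries $\s_o$ to $\pm \s_{o'}$ for the transported orientation $o'$, using the explicit formulas for the Lee generators obtained in the proof of Theorem \ref{thm:Lee}.
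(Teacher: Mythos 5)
Your proposal follows essentially the same route as the paper: decompose $\Sigma$ into Reidemeister moves and elementary Morse moves, show each induced map is filtered with the expected degree contribution, track Lee generators through the decomposition, and use the hypothesis on components of $\Sigma$ to pin down a single image generator. Two places deserve a caveat, though. First, your move-by-move claim that each elementary cobordism sends $\s_o$ to a single $\pm\s_{o'}$ is literally false for births: a birth sends $\s_o$ to $\tfrac{1}{2}(\s_{o'} + \s_{o''})$, a sum over the two orientations of the newborn circle (Lemma~\ref{lem:births on Lee gens}), since the orientation of a freshly born circle is not determined until a later saddle connects it to the rest of the surface. The paper handles this by proving a global statement (Theorem~\ref{thm:general cobs on Lee gens}, modeled on \cite{MMSW}) that $\phi_\Sigma(\s_{o_0})$ is a sum over \emph{orientations of the entire surface $\Sigma$} extending $o_0$, and then the hypothesis that every component of $\Sigma$ meets $L_0$ forces that sum to a single term. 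You do invoke this hypothesis, but your per-move bookkeeping would need to be replaced by this global sum to actually close the argument. Second, the $\RP^3$-specific complications you defer as a ``verification step'' are in fact a substantial portion of the proof: the Lee complex here depends on resolution choices (orderings and orientations of circles in each resolution), so the Lee generators $\s_o$ are only well defined relative to a choice of dividing circle, and one must track compatible dividing circles through every Reidemeister and Morse move (Proposition~\ref{prop:Lee gens under change of div circ}, Lemmas~\ref{lem:Reid 1 on Lee gens}--\ref{lem:Reid 4 and 5 on Lee gens}, \ref{lem:births on Lee gens}--\ref{lem:saddles on Lee gens}). One also has to address unorientable saddles (1-1 bifurcations), which contribute the zero map; for an oriented $\Sigma$ these do not arise on the relevant summands, as the paper checks in Lemma~\ref{lem:saddles on Lee gens}.
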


In particular, we get a bound on the slice genus of knots in $\RP^3$. Recall that, for $K\subset S^3$, the slice genus $g_4(K)$ is the minimal genus of an oriented, connected surface in $B^4$ with boundary $K$; or, equivalently, the minimal genus of an oriented, connected cobordism in $I \times S^3$ from $K$ to the unknot. In $\RP^3$, following the terminology in \cite{MN}, we distinguish between class-0 knots and class-1 knots, according to their homology class in $H_1(\RP^3; \Z) = \Z/2.$ Observe that cobordisms $\Sigma \subset I \times \RP^3$ between two knots exist only if the knots are in the same class. For class-0 knots, we can define their slice genus in terms of cobordisms to the {\em class-0 unknot} $U_0 \subset \R^3 \subset \RP^3$ (called the affine unknot in \cite{MN}). For class-1, we will use instead cobordisms to the {\em class-1 unknot} $U_1 = \RP^1 \subset \RP^3$ (called the projective unknot in \cite{MN}).

\begin{definition}
Let $K \subset \RP^3$ be a class-$\alpha$ knot, $\alpha \in \{0,1\}$. The {\em slice genus} of $K$, denoted $g_4(K)$, is the minimal genus of a compact, oriented cobordism $\Sigma \subset I \times \RP^3$ from $K$ to the class-$\alpha$ unknot $U_{\alpha}$.
\end{definition}

The following is an immediate consequence of Theorem~\ref{thm:BW}, together with the calculation $s(U_0) = s(U_1)=0$:
\begin{corollary}
\label{cor:s}
If $K \subset \RP^3$ is a knot, then
$$ |s(K)| \leq 2g_4(K).$$ 
\end{corollary}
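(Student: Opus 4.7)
The plan is to reduce directly to Theorem~\ref{thm:BW} by feeding in a minimal-genus cobordism in each direction. Let $\alpha\in\{0,1\}$ be the homology class of $K$, and let $\Sigma\subset I\times\RP^3$ be a connected, oriented cobordism from $K$ to the class-$\alpha$ unknot $U_\alpha$ realizing the slice genus $g=g_4(K)$. Since $\Sigma$ is connected with two boundary circles (one in each end), $\chi(\Sigma)=2-2g-2=-2g$. Because $\Sigma$ is connected, the hypothesis of Theorem~\ref{thm:BW} (every component of $\Sigma$ meets $L_0$) is satisfied with either end playing the role of $L_0$.

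Applying Theorem~\ref{thm:BW} with $L_0=K$, $L_1=U_\alpha$, and the cobordism $\Sigma$ gives
$$ s(U_\alpha)-s(K)\ \geq\ \chi(\Sigma)\ =\ -2g_4(K).$$
Reversing the $I$-coordinate produces a connected cobordism $\Sigma'$ from $U_\alpha$ to $K$, of the same Euler characteristic $-2g_4(K)$. Applying Theorem~\ref{thm:BW} to $\Sigma'$ (now with $L_0=U_\alpha$, $L_1=K$) yields
$$ s(K)-s(U_\alpha)\ \geq\ -2g_4(K).$$
Combining the two inequalities gives $|s(K)-s(U_\alpha)|\leq 2g_4(K)$.

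It remains to note $s(U_0)=s(U_1)=0$. This is a direct computation using the cycles $\Leegen_o,\Leegen_{\bar o}$ from Theorem~\ref{thm:Lee}: the class-$0$ unknot $U_0$ admits a standard crossingless circle diagram supported in an affine disk of $\RP^2$ disjoint from the crosscap, so its Lee theory reduces to the $S^3$ calculation of \cite{Lee} and the quantum filtration degrees of $[\Leegen_o\pm\Leegen_{\bar o}]$ are $\pm 1$; while $U_1=\RP^1$ is represented by a single arc through the crosscap, whose Lee generators can similarly be shown to have quantum degrees $\pm 1$. In both cases the average of the two filtration degrees is $0$, hence $s(U_\alpha)=0$, and the corollary follows. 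There is no real obstacle here — the substance of the argument is packed into Theorem~\ref{thm:BW} — apart from the small verification that $s(U_1)=0$, which is carried out directly from the diagrammatic definition of $\LH^*$ on $\RP^2$.
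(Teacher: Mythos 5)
Your proof is correct and follows essentially the same route as the paper: apply Theorem~\ref{thm:BW} to a minimal-genus cobordism $\Sigma$ between $K$ and $U_\alpha$ (and to its reverse), compute $\chi(\Sigma)=-2g_4(K)$, and observe $s(U_0)=s(U_1)=0$. The paper's proof is terser but has the identical structure, so no further comparison is needed.
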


Let us now discuss a few calculations of the $s$-invariant. First, we consider local knots in $\RP^3$ (called affine knots in \cite{MN}), which are those contained in a copy of $\R^3$ in $\RP^3$.
\begin{theorem}
\label{thm:local}
(a) The $s$-invariant of a local knot $K \subset \R^3 \subset \RP^3$ coincides with Rasmussen's original $s$-invariant for the corresponding knot $K \subset \R^3$.

(b) If $K \subset \RP^3$ and $K' \subset S^3$ are knots, then the $s$-invariant of the connected sum $K \# K' \subset \RP^3 \# S^3 \cong \RP^3$ is given by
$$ s(K \# K') = s(K)+s(K').$$ 
\end{theorem}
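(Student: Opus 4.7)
I would choose a diagram $D$ for $K$ lying entirely in a small disk contained in $\R^2 \subset \RP^2$, with no crossings or arcs near the antipodal boundary of the disk model of $\RP^2$. Because $D$ is local, every vertex of the Kauffman cube of resolutions of $D$ consists only of nullhomotopic circles in $\RP^2$; no projective (i.e.\ orientation-reversing) circle appears at any vertex. Inspecting the construction in Section \ref{sec:Lee}, both the Khovanov chain complex $\KC(D)$ and its Lee deformation then reduce literally to the standard Khovanov and Lee complexes of $D$ viewed as a diagram in $S^2$. In particular, the canonical Lee cycles $\s_o$ produced in Theorem \ref{thm:Lee homology of diagram} coincide with Lee's original generators from \cite{Lee}, so the quantum filtrations $\q([\s_o \pm \s_{\bo}])$ agree with those computed in $S^3$. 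The equality of $s$-invariants follows directly from the definition.

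\textbf{Part (b).} I would pick a diagram $D \# D'$ in which $D$ is a diagram of $K$ in the disk model of $\RP^2$ and $D'$ is a diagram of $K'$ placed in a small disk of $\R^2 \subset \RP^2$, joined to $D$ by a connect-sum arc lying entirely in the local region. The key structural claim is a tensor-product decomposition
\[
\LCC(D \# D') \otimes_{\Q} A \;\cong\; \LCC(D) \otimes_{\Q} \LCC(D')
\]
as filtered chain complexes, where $A = \Q[x]/(x^2-1)$ is the Lee Frobenius algebra at a marked point on the connect-sum arc. Because $D'$ sits in the local region, every merge/split produced by splitting the connect-sum arc acts only on trivial circles; the algebraic content is therefore identical to the classical $S^3$ situation, and the projective structure of $D$ is untouched. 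Under this decomposition, a direct check shows that the canonical Lee cycles are multiplicative: for an orientation $o \# o'$ of $K \# K'$ restricting to $o$ on $K$ and $o'$ on $K'$,
\[
[\s_{o \# o'}] \;\longleftrightarrow\; [\s_o] \otimes [\s_{o'}] \otimes e_{o \# o'},
\]
for an explicit idempotent basis element $e_{o \# o'} \in A$. The additivity $s(K \# K') = s(K)+s(K')$ then follows from the standard quantum-filtration calculation on $\s_o \pm \s_{\bo}$, exactly as in Rasmussen's original argument; the quantum shifts contributed by $A$ cancel in the averaged formula defining~$s$.

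\textbf{Main obstacle.} The substantive step is establishing the tensor-product factorization of the Lee complex in the $\RP^3$ setting. Once one observes that the connect-sum arc, together with the merges and splits it induces, lives entirely in the local $\R^2$ region --- and hence never interacts with the projective part of $D$ --- this factorization reduces cleanly to the classical $S^3$ statement of \cite{Rasmussen}, and the remainder of the argument is a standard comparison of filtration degrees of canonical generators, combined with the computation of $s(U_0)=s(U_1)=0$ to handle the base cases.
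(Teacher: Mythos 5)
Your approach matches the paper's in both parts, and the key topological ideas are right: for a local diagram no essential circle ever appears in any resolution, and the connect-sum arc and all its merges and splits live in a local $\R^2$ region, so the algebra factorizes as in $S^3$. The paper proves part (a) via Proposition~\ref{prop:local links}, which rests on Theorem~\ref{thm:local link is same as S3}, and part (b) via the connected-sum statement in Proposition~\ref{prop:disj unions and conn sums}, which imports Rasmussen's argument (\cite[Proposition 3.11]{Rasmussen}) and is equivalent to your tensor-product factorization.

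Where your write-up is too fast is in the claim that the complex ``reduces literally'' to the $S^3$ one. In the $\RP^3$ construction, $\KC^*_d(D)$ depends on a set of resolution choices (orderings and orientations of the circles at each vertex of the cube), and the differential carries the sign rules (P), (O), (C) rather than the usual Khovanov signs. So even when every resolution circle is nullhomotopic, one obtains a priori a different sign scheme, and one must produce a chain \emph{isomorphism}. The paper does this by choosing a dividing circle $\mc{C}$ disjoint from the local diagram; the induced orientations $o_{\mc{C}}$ make every saddle in the cube orientable, so by Lemma~\ref{lem:signs for oriented saddle} all signs become uniform along each edge, yielding a commuting cube with an edge sign assignment and hence a complex isomorphic to the standard one. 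Moreover, with $\mc{C}$ far away, $o_{\mc{C}}$ is exactly Rasmussen's ``distance from infinity'' orientation, so the Lee generators $\LeegenC_o$ coincide with Lee's. The same caveat applies to part (b): you must justify that a single choice of dividing circle for $D$, $D\# D'$, and $D'$ can be made so that the factorization is a filtered chain isomorphism; the paper handles this by isotoping $\mc{C}$ off the disk containing $D'$ and off the connecting arcs. With these sign/choice issues spelled out, your argument is the same as the paper's.
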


In \cite{Rasmussen}, Rasmussen computed his $s$-invariant for all positive knots in $S^3$. One can also define positive knots in $\RP^3$, to be those that admit a diagram in $\RP^2$ with only positive crossings. Here is the analogue of Rasmussen's calculation.
\begin{theorem}
\label{thm:positive}
Let $K \subset \RP^3$ be a positive knot. Let $n$ be the number of crossings in a positive diagram of $K$, and let $r$ be the number of circles in the oriented resolution of that diagram. Then:
$$ s(K) = n-r+1.$$
\end{theorem}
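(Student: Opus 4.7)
The plan is to prove matching upper and lower bounds $s(K) \leq n-k+1$ and $s(K) \geq n-k+1$, following Rasmussen's strategy for positive knots in $S^3$.

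For the upper bound, I would construct an oriented cobordism $\Sigma \subset I \times \RP^3$ from $K$ at $t=0$ to the class-$\alpha$ unknot $U_\alpha$ at $t=1$, where $\alpha \in \{0,1\}$ is the class of $K$. Starting from $K$, I attach a saddle (1-handle) at each of the $n$ positive crossings, obtaining at an intermediate time the oriented resolution: a disjoint union of $k$ simple closed curves in $\RP^2$. Since $\RP^2$ cut along any nonseparating simple closed curve is a disk, at most one of these $k$ circles is nonseparating, and this occurs precisely when $K$ is class-$1$. In the class-$0$ case, cap off $k-1$ of the $k$ separating circles with disks in $\RP^3$, leaving one small separating circle isotopic to $U_0$. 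In the class-$1$ case, cap off the $k-1$ separating circles with disks and observe that the remaining nonseparating circle is isotopic in $\RP^3$ to $U_1 = \RP^1$. In both cases $\Sigma$ is a connected orientable cobordism with
\[
\chi(\Sigma) \;=\; -n + (k-1) \;=\; k-n-1.
\]
Applying Theorem~\ref{thm:BW} with $L_0 = K$, $L_1 = U_\alpha$, together with $s(U_\alpha) = 0$, then yields $s(K) \leq n-k+1$.

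For the lower bound, I would compute the chain-level quantum filtrations of $\s_o \pm \s_{\bar{o}}$ directly. Since the diagram is positive, its oriented resolution is the all-$0$ vertex of the cube of resolutions, so $\s_o$ and $\s_{\bar{o}}$ are supported at this single vertex and factor as tensor products of the Lee eigenvectors $\mathfrak{a}, \mathfrak{b}$ on the $k$ resolution circles. Expanding in the standard $v_\pm$ basis, $\s_o$ becomes a signed sum of $2^k$ monomials whose quantum gradings take the values $n + k - 2j$ for $j=0,1,\ldots,k$, with the unique minimum $n-k$ attained by the all-$v_-$ labeling. Reversing the orientation $o \mapsto \bar{o}$ swaps $\mathfrak{a}$- and $\mathfrak{b}$-labels on each circle, which multiplies the coefficient of the all-$v_-$ monomial by $(-1)^k$. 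Thus exactly one of $\s_o + \s_{\bar{o}}$, $\s_o - \s_{\bar{o}}$ retains this extremal monomial with nonzero coefficient, while in the other it cancels, pushing the chain-level filtration up to at least $n-k+2$. Therefore
\[
\q([\s_o + \s_{\bar{o}}]) + \q([\s_o - \s_{\bar{o}}]) \;\geq\; \q(\s_o + \s_{\bar{o}}) + \q(\s_o - \s_{\bar{o}}) \;\geq\; (n-k) + (n-k+2),
\]
and dividing by $2$ gives $s(K) \geq n-k+1$, completing the proof.

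The main technical obstacle I anticipate is verifying this chain-level computation in the class-$1$ case, where exactly one of the $k$ oriented-resolution circles is nonseparating in $\RP^2$. The APS/\Gabrov{} theory attaches a different algebra to nonseparating circles than to contractible ones, so one must invoke the explicit description of the Lee eigenvectors $\mathfrak{a}, \mathfrak{b}$ and their quantum gradings on non-contractible circles given in Section~\ref{sec:Lee}, and check that the monomial expansion of $\s_o$ still spans the quantum gradings $\{n-k, n-k+2, \ldots, n+k\}$ with the sign flip $(-1)^k$ under orientation reversal. Once this is settled, the chain-level lower bound matches the cobordism upper bound and produces the stated equality.
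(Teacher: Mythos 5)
Your proposal is correct, but it takes a more circuitous route than the paper, which bypasses the cobordism argument entirely. For the lower bound, your chain-level calculation of $\q(\LeegenC_o\pm\LeegenC_{\oo})$ (one equals $n-k$, the other is at least $n-k+2$) is sound and essentially matches the paper's observation; your worry about the essential circle is unnecessary, since the Lee basis $\oa=\oone+\ox$, $\ob=\oone-\ox$ and the grading table in Section~\ref{sec:Defining deformed cx and Lee cx} put $\ox$ at the same quantum degree offset $-1$ as $X$, so the expansion and the $(-1)^k$ sign flip under $\Theta$ go through verbatim. Where the two proofs diverge is in the upper bound: the paper simply notes that for a positive diagram ($n^-=0$) the whole complex sits in homological degrees $i=|v|\geq 0$ while $\LeegenC_o$ lives at $i=0$ and realizes the globally minimal quantum degree $n-k$; since there are no chains at $i=-1$, the class $[\LeegenC_o]$ has the unique representative $\LeegenC_o$, so $\q([\LeegenC_o])=n-k$ exactly and $s(K)=\smin+1=n-k+1$ follows at once via Corollary~\ref{cor:smin and smax}. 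Your cobordism-plus-Theorem~\ref{thm:BW} upper bound is also valid, and in fact that Seifert cobordism is precisely what the paper uses afterwards to prove Corollary~\ref{cor:g4positive} (that $g_4(K)=(n-k+1)/2$) — but there it is derived \emph{from} Theorem~\ref{thm:positive} rather than used to prove it. Both routes land in the same place; the paper's is shorter because the degree argument gives equality in one shot without invoking the cobordism machinery.
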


The $s$-invariant and the slice genus of some $K \subset \RP^3$ can be compared with those of its lift to the $2$-fold cover $S^3 \to \RP^3$. Let us focus on the case of class-$1$ knots $K$, for which this lift (denoted $\tK)$ is a knot rather than a link. Such knots $\tK \subset S^3$ are called {\em freely $2$-periodic}, and were studied for example in \cite{BM}, \cite{MN}. The quantity $2g_4(K)$ is easily seen to be the same as the {\em standardly equivariant slice genus} $\gequiv(\tK)$, the minimal genus of an oriented, properly and smoothly embedded  surface $\tSigma \subset B^4$ with boundary $\tK$ such that $\tSigma$ is invariant under the involution $x \mapsto -x$ on $B^4$. (This is different from the {\em equivariant slice genus} of $\tK$, for which more general involutions are allowed; compare \cite{BM}.) We have
$$ g_4(\tK) \leq 2g_4(K) =\gequiv(\tK).$$ 

Let us compare the lower bound on $\gequiv(\tK)$ from Corollary~\ref{cor:s},
\begin{equation}
\label{eq:bound1}
|s(K)| \leq \gequiv(\tK),
\end{equation}
 with Rasmussen's bound 
 \begin{equation}
\label{eq:bound2}
 |s(\tK)/2| \leq g_4(\tK) \leq \gequiv(\tK).
 \end{equation}
  In the case of positive knots, by applying the results in \cite{Rasmussen} and Theorem~\ref{thm:positive} we get that the two bounds are the same; in fact, we have
\begin{equation}
s(\tK)/2 = s(K) = g_4(\tK) = \gequiv(\tK).
\end{equation}

Theorem~\ref{thm:local} implies that we also have $s(\tK) = 2s(K)$ for class-1 knots of the form $U_1 \# K$, where $K \subset S^3$.

On the other hand, computer calculations show that $s(\tK) \neq 2s(K)$ in the following two examples.

\begin{example}
\label{ex:K1}
Consider the knot $K_1 \subset \RP^3$ shown in Figure~\ref{fig:K1}. Its lift $\tilde K_1 \subset S^3$ is the freely $2$-periodic knot $12n403$. We have
$$ s(K_1)=0, \ \ s(\tK_1) = 2,$$
so in this case our genus bound \eqref{eq:bound1} is weaker than the bound \eqref{eq:bound2} from the original $s$-invariant. It turns out that $\gequiv(\tK_1) = g_4(\tK_1) =2$.
\end{example}

\begin{figure}
{
   \def\svgwidth{1.8in}
   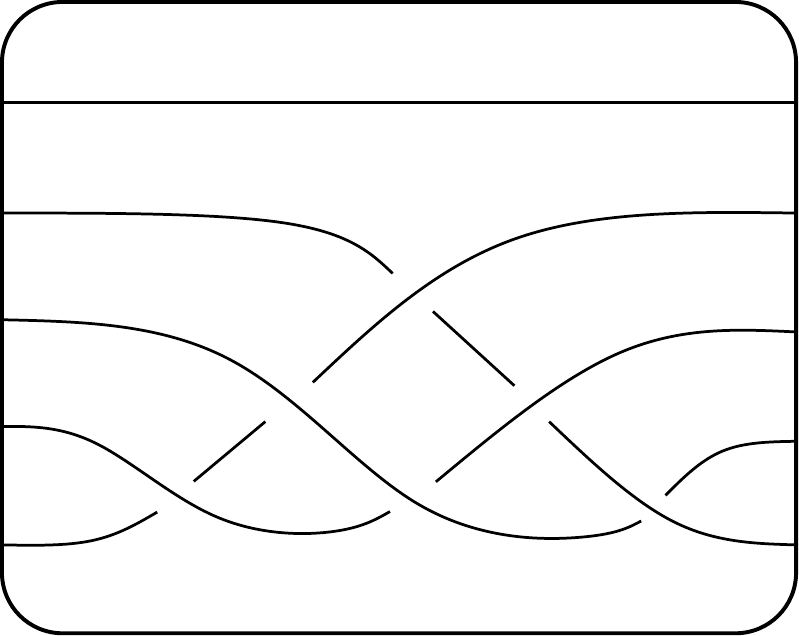
}
\caption{A knot $K_1$ in $\RP^3$, presented through its projection to $\RP^2$. The antipodal points on the boundary of the disk are identified to form $\RP^2$.}
\label{fig:K1}
\end{figure}

\begin{example}
\label{ex:K2}
Let  $K_2 \subset \RP^3$ be the knot from Figure~\ref{fig:K2}. Its lift $\tilde K_2 \subset S^3$ is the freely $2$-periodic knot $14n14256$. We have
$$ s(K_2) = 2, \ \ \ s(\tK_2) = 2.$$
Thus, our new bound \eqref{eq:bound1} reads $\gequiv(\tK_2) \geq 2$, which looks stronger than the bound $\gequiv(\tK_2) \geq s(\tK_2)/2=1$, albeit it is not really stronger because we know that $\gequiv(\tK_2)=2g_4(K_2)$ is even. Nevertheless, this is an instructive example because it turns out that the slice genus and the standardly equivariant slice genus are different:
 $$g_4(\tK_2) =1 < 2=\gequiv(\tK_2).$$
\end{example}

\begin{figure}
{
   \def\svgwidth{1.8in}
   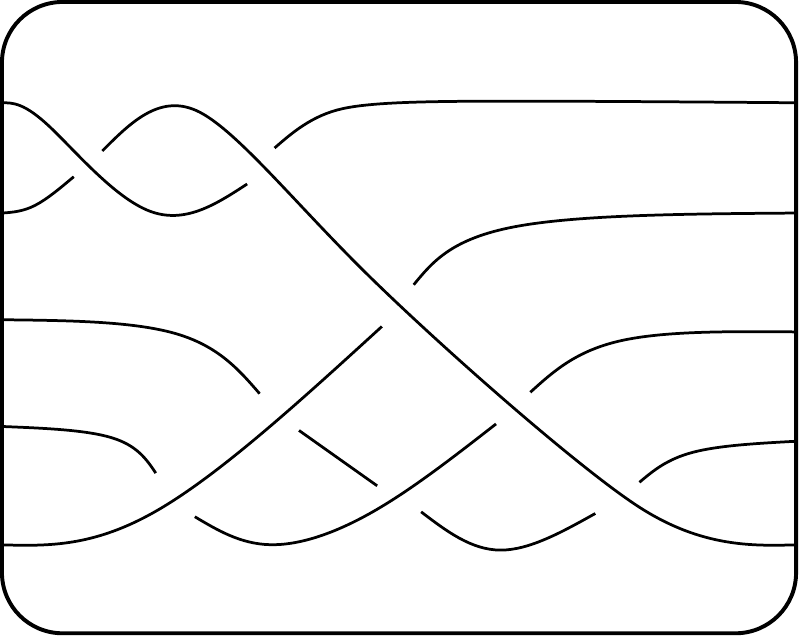
}
\caption{Another knot, $K_2$, in $\RP^3$.}
\label{fig:K2}
\end{figure}

We say that two freely periodic 2-knots are {\em standardly equivariantly concordant} if there is a smooth annular cobordism between them that is equivariant under the involution on $I \times S^3$ given by the identity times the antipodal map; this is equivalent to asking for their quotients in $\RP^3$ to be concordant in $I \times \RP^3$. By using the properties of our $s$-invariant, we obtain the following result.

\begin{theorem}
\label{thm:eqconc}
Let $K \subset \RP^3$ be either the knot $K_1$ from Example~\ref{ex:K1} or the knot $K_2$ from Example~\ref{ex:K2}. Let $\tK$ be the lift of $K$ to $S^3$, and view $\tK' = (-\tK) \# \tK \# \tK$ as the lift of $K' = (-K) \# \tK \subset \RP^3 \# S^3 \cong \RP^3.$ Then, the freely $2$-periodic knots $\tK$ and $\tK' = (-\tK) \# \tK \# \tK$ are concordant but not standardly equivariantly concordant.
\end{theorem}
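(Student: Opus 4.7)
\emph{Proof proposal.} The statement splits into two halves: exhibit a smooth concordance between $\tK$ and $\tK'$, and obstruct a standardly equivariant one using the $\RP^3$ $s$-invariant.

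For the concordance half, I would use the classical fact that for any oriented knot $J \subset S^3$, the connected sum $J \# (-J)$ is slice via a standard ribbon disk. Applied to $J = \tK$, this shows $(-\tK) \# \tK$ is slice, so $\tK' = (-\tK) \# \tK \# \tK$ is concordant in $I \times S^3$ to $\tK$: concatenate the slice disk for $(-\tK) \# \tK$ with a product annulus on the remaining $\tK$ summand.

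For the obstruction, I would invoke the equivalence recalled just before Theorem~\ref{thm:eqconc}: standardly equivariant concordance of $\tK$ and $\tK'$ is equivalent to concordance of the quotients $K$ and $K' = (-K) \# \tK$ in $I \times \RP^3$. Suppose for contradiction that a concordance $\Sigma$ between $K$ and $K'$ exists. It is an annulus, so $\chi(\Sigma) = 0$, and each of $K$, $K'$ lies in a single boundary component; applying Theorem~\ref{thm:BW} both to $\Sigma$ and to its reverse yields $s(K) = s(K')$. Theorem~\ref{thm:local}(b) gives
\[ s(K') = s((-K) \# \tK) = s(-K) + s(\tK). \]
Granting the mirror identity $s(-K) = -s(K)$ (see the next paragraph) and inserting the computed values from Examples~\ref{ex:K1} and~\ref{ex:K2}: for $K = K_1$, $s(K'_1) = -0 + 2 = 2 \neq 0 = s(K_1)$; for $K = K_2$, $s(K'_2) = -2 + 2 = 0 \neq 2 = s(K_2)$. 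Both cases contradict $s(K) = s(K')$, completing the obstruction.

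The one ingredient not already at hand is the mirror identity $s(-K) = -s(K)$ for knots $K \subset \RP^3$. I expect this to follow by exactly the argument from Rasmussen's original paper: the Khovanov--Lee complex of $-K$ is canonically the linear dual of that of $K$ (with appropriate quantum and homological shifts), the Lee generators $\mathfrak{s}_o, \mathfrak{s}_{\bar o}$ are interchanged under this duality, and the induced pairing negates the quantum filtration levels of the symmetric and antisymmetric combinations that define $s$. Verifying that this duality carries over unchanged through the $\RP^2$-diagram framework of Asaeda--Przytycki--Sikora--Manturov--\Gabrov{}ek is the only real bookkeeping task; this is the step I anticipate writing out most carefully, although it should require no new idea beyond the $S^3$ case.
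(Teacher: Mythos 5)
Your proposal matches the paper's proof in both structure and every essential step: the same ribbon-disk argument for the concordance half, the same translation of standardly equivariant concordance into concordance of quotients in $I\times\RP^3$, and the same application of Theorem~\ref{thm:BW}, Theorem~\ref{thm:local}(b), and the mirror identity to derive the contradiction. The one ingredient you flag as potentially missing, $s(-K)=-s(K)$, is in fact already proved in the paper as Proposition~\ref{prop:mirrors}, via precisely the Rasmussen-style duality argument you anticipate, so there is no gap to fill.
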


\medskip

\textbf{Organization of the paper.} In Section~\ref{sec:Defining deformed cx and Lee cx} we define the Lee deformation of the Khovanov complex in $\RP^3$. In Section~\ref{sec:Lee} we compute the Lee homology, proving Theorem~\ref{thm:Lee}.  In Section~\ref{sec:s-invt defined} we define our $s$-invariant for links in $\RP^3$ and study some of its basic properties, proving Theorem~\ref{thm:local}. In Section~\ref{sec:cobordisms} we study how the Lee generators behave under cobordisms, and prove Theorem~\ref{thm:BW}, Corollary~\ref{cor:s}, and Theorem~\ref{thm:local}. In Section~\ref{sec:app} we give some properties and applications, including Theorems~\ref{thm:positive} and \ref{thm:eqconc}. At the end we also discuss some open problems.

\medskip

\textbf{Acknowledgements.} We would like to thank Keegan Boyle, Daren Chen, Irving Dai, Anthony Licata, and Maggie Miller for helpful conversations. We are also grateful to an anonymous referee for many comments that improved the paper, and especially for providing us with the proof of Theorem~\ref{thm:no extra s-invariants}.

Part of this work was supported by NSF Grant No.1440140, while the authors were in residence at the Simons Laufer Mathematical Sciences Institute in Berkeley, California, during Fall 2022. The work was also partially supported by the first author's NSF grant DMS-2003488 and Simons Investigator award.

\section{The deformed complex and the Lee complex}\label{sec:Defining deformed cx and Lee cx}
In \cite{Gabrovsek}, \Gabrov~ constructs a Khovanov chain complex for framed, oriented links in $\RP^3$.  After adjusting grading conventions to remove the dependency on the framing, we will deform the differentials in his construction to arrive at a deformed Khovanov complex for oriented links in $\RP^3$, in a manner analogous to the construction of the Lee complex for annular links defined in \cite{GLW}.  Before we begin we set some terminology for the links under consideration.

Let us identify $\RP^3 \setminus \text{pt}$ with the non-trivial $I$-bundle $E$ over $\RP^2$, given by
$$E=(D^2 \times I) / (x, t) \sim (-x, 1-t), \forall x \in D^2, \forall t \in [0,1].$$ 

\begin{definition}
A {\em diagram} $D$ for a link $L \subset \RP^3$ is a tangle diagram in the oriented disk $D^2$, with the ends of the tangle consisting of pairs of antipodal points. (See Figures~\ref{fig:K1} and \ref{fig:K2} for examples.) We require that when we lift the diagram $D$ to $E \subset \RP^3$ so that the overstrands go on top, we obtain a link isotopic to $L$.
\end{definition}

Note that after we identify all the antipodal points on the boundary of $D^2$, we can view $D$ as a diagram in $\RP^2$. However, defining it as a subset of the oriented $D^2$ will allow us to talk about understrands and overstrands.

\begin{definition}\label{def:class-0 class-1 local}
A link $L\subset \RP^3$ is \emph{class-0} (respectively \emph{class-1}) if $[L]=0$ (respectively $[L]=1$) as an element of $H_1(\RP^3;\Z)\cong\Z/2\Z$.  The link $L$ is called \emph{local} if it is contained in a ball $B^3\subset\RP^3$ (and thus must also be class-0).  A \emph{local} diagram $D\subset\RP^2$ for a local link is a link diagram contained within a 2-disc $B^2\subset\RP^2$.
\end{definition}
See \cite{MN}, where local knots are referred to as \emph{affine knots}.

Now let $D\subset \RP^2$ be a link diagram with $n$ crossings for some link $L$ in $\RP^3$ (we are ignoring any orientation for $L$ or $D$ at present).  After choosing an ordering of the crossings in $D$, we arrive at (unoriented) resolutions $D_v\subset \RP^2$ for each vertex $v$ in the cube $\cube^n:=(0\rightarrow 1)^n$ by following the usual conventions for 0-resolutions and 1-resolutions of crossings as indicated in Figure \ref{fig:resolutions}.  Each resolution $D_v$ consists of some number of homologically trivial circles, together with at most one homologically essential circle as determined by the following lemma.

\begin{lemma}\label{lem:ess circ in Dv iff L is class-1}
A homologically essential circle exists in a resolution $D_v$ if and only if $D$ is the diagram for a class-1 link $L$.
\end{lemma}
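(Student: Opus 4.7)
The plan is to reduce this to two standard topological observations: that the $\Z/2$-homology class of a diagram in $\RP^2$ is preserved under crossing resolution, and that $\RP^2$ admits at most one disjoint homologically essential simple closed curve.

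First I would recall that $\RP^3\setminus\{\text{pt}\}$ deformation retracts onto $\RP^2$ via the projection $p$ whose fibers are intervals, and this induces an isomorphism $H_1(\RP^3;\Z/2)\cong H_1(\RP^2;\Z/2)\cong\Z/2$ identifying $[L]$ with $[D]$. Hence it suffices to compute $[D_v]\in H_1(\RP^2;\Z/2)$ and show this equals the number of essential components of $D_v$ modulo $2$, while simultaneously showing that this number is at most $1$.

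Next I would verify that $[D_v]=[D]$ in $H_1(\RP^2;\Z/2)$ for every $v\in\cube^n$. At each crossing, the local pictures of $D$, $D_0$, and $D_1$ all sit inside a small disk in $\RP^2$ and agree on its boundary. Therefore, on each such local disk, the symmetric difference of $D$ with $D_v$ is the mod-$2$ boundary of a $2$-chain (indeed, of one of the two bigons cut out by the four boundary arcs). Summing over crossings yields a mod-$2$ $2$-chain whose boundary is $D+D_v$, so $[D_v]=[D]=[L]$ in $\Z/2$.

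The second ingredient is that a homologically essential simple closed curve $C\subset\RP^2$ has complement an open disk (this is just the fact that cutting $\RP^2$ along $\RP^1$ yields $D^2$). Consequently any simple closed curve disjoint from $C$ lies in a disk and is therefore homologically trivial; so $D_v$, being a disjoint union of simple closed curves, contains at most one essential circle. Combining the two observations, the mod-$2$ count of essential circles in $D_v$ equals $[D_v]=[L]\in\Z/2$, and since this count is $0$ or $1$, an essential circle is present in $D_v$ if and only if $L$ is class-$1$.

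I expect no real obstacle here; the only slightly delicate point is being careful that the resolution-by-resolution homology argument is set up on chains so that the local bigon bounds really do witness $D+D_v$ as a boundary, rather than waving hands about "smoothings preserve homology."
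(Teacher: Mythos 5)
Your proof is correct and takes essentially the same route as the paper: the paper's one-sentence argument also reduces to the facts that every resolution $D_v$ represents the same class as $D$ in $H_1(\RP^2;\Z)\cong\Z/2$ (it invokes "an inductive argument on crossings" where you give the explicit local bigon witness), that $[D]=[L]$ under the identification $H_1(\RP^2)\cong H_1(\RP^3)$, and that $\RP^2$ contains at most one disjoint essential simple closed curve (stated in the paper just before the lemma). You have simply spelled out what the paper leaves terse.
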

\begin{proof}
An inductive argument on crossings shows that every resolution $D_v\subset\RP^2$ of $D$ must represent the same class in $H_1(\RP^2;\Z)\cong \mathbb{Z}/2\mathbb{Z}$ as $D$, which itself must represent the same class as $L$ in $H_1(\RP^3;\Z)\cong \mathbb{Z}/2\mathbb{Z}$.
\end{proof}

Now in order to build the chain groups at each vertex, and the differentials between them, we follow \Gabrov~ \cite{Gabrovsek} and introduce some auxiliary choices.

\begin{figure}
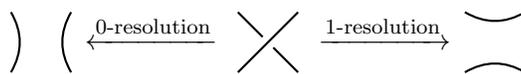

\[
\,\ILtikzpic[scale=.8]{
\draw[thick] (0,1) to[out=-60,in=60] (0,0);
\draw[thick] (1,1) to[out=-120,in=120] (1,0);
}\,
\xleftarrow{\text{0-resolution}}
\,\ILtikzpic[scale=.8]{
\draw[thick] (0,1) -- (1,0);
\drawover{ (0,0) -- (1,1); }
}\,
\xrightarrow{\text{1-resolution}}
\,\ILtikzpic[scale=.8]{
\draw[thick] (0,1) to[out=-30,in=-150] (1,1);
\draw[thick] (0,0) to[out=30,in=150] (1,0);
}\,
\]
\caption{Each crossing in a link diagram $D\subset\RP^2$ can be resolved in one of two ways as indicated.  Thus a vertex $v\in\cube^n$ determines a resolution $D_v$ consisting of circles in $\RP^2$.}
\label{fig:resolutions}
\end{figure}

\begin{definition}\label{def:auxiliary diagram choices}
Given a fixed (unoriented) link diagram $D\subset\RP^2$, a set of \emph{auxiliary diagram choices} for $D$ consists of the following data.
\begin{enumerate}
\item For each crossing $c$ in $D$, we choose a \emph{local orientation} for the two strands at $c$, meaning an orientation for each strand that need only be locally consistent.  See Figure \ref{fig:local orientations} for an example.
\item For each $v\in\cube^n$, we choose an ordering of the circles in $D_v$.
\item For each $v\in\cube^n$, we choose an orientation for $D_v$ (that is to say, an orientation for each circle in $D_v$).
\end{enumerate}
\end{definition}

\begin{figure}
	\[\begin{tikzpicture}
	\draw[thick,->] (1,0) -- (0,1);
	\draw[thick,->] (0,3) -- (1,2);
	\drawover[thick,<-]{(0,0) -- (1,1);}
	\drawover[thick,->]{(1,3) -- (0,2);}
	\draw[thick] (0,1) to[out=135,in=-135] (0,2);
	\draw[thick] (1,1) to[out=45,in=-45] (1,2);
	\draw[thick] (0,0) to[out=-135,in=-90] (-1,1.5) to[out=90,in=135] (0,3);
	\draw[thick] (1,0) to[out=-45,in=-90] (2,1.5) to[out=90,in=45] (1,3);
	\end{tikzpicture}\]
	\caption{An \emph{unoriented} diagram for the Hopf link, with each crossing given a \emph{local} orientation.  Note that these local orientations have no global consistency requirements.}
	\label{fig:local orientations}
\end{figure}
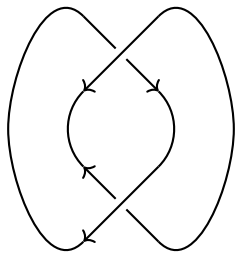

\begin{remark}\label{rmk:new choices vs Gabs choices}
Note that an orientation on the link diagram $D\subset\RP^2$ induces a choice of local orientations at each crossing $c$ that is globally consistent rather than simply locally consistent.  In \cite{Gabrovsek}, \Gabrov~ uses an orientation of $D$ in place of a choice of local orientations; however there is a small gap in his proof that the resulting differential squares to zero.  The added flexibility of allowing orientations at crossings that are only locally consistent will allow us to fill this gap, and will also simplify the presentation of the skein exact sequence for a crossing in Proposition \ref{prop:skein exact seq}.

In addition, \Gabrov~ in \cite{Gabrovsek} demands that any homologically essential circle in each $D_v$ is ordered last, but this is for notational convenience only.
\end{remark}

Let $R$ be a commutative ring. Define two free rank two $R$-modules
$$V=\langle 1,X\rangle, \ \ \ \ov=\langle \oone,\ox\rangle.$$
To each oriented, ordered resolution $D_v$, we assign a tri-graded (unnormalized) chain group $\unC(D_v)$ which is a tensor product of factors of $V$ for each trivial circle, and $\ov$ for the essential circle (if present).  The ordering of the circles determines the ordering of this tensor product.  The tri-grading, denoted $(i,j,k)$, is defined on a generator in $\unC(D_v)$ as follows:
\begin{align*}
i&=|v|,\\
j &=|v| + \#(\text{circles labeled with $1$ or $\oone$}) - \# \text{(circles labeled with $X$ or $\ox$)},\\
k &= \#\text{(circles labeled with  $\oone$)} - \# \text{(circles labeled with  $\ox$),}
\end{align*}
where $|v|$ denotes the norm of $v\in\cube^n$, i.e. the sum of the entries of $v$.  

We may think of $i$ as a homological grading, and $j$ as an internal quantum grading (both to be globally renormalized at a later point in the construction).  The grading $k$ is similar to the annular grading in annular Khovanov and Lee homology \cite{GLW}, and tracks the generator on an essential circle (if present).  Since there can be at most one such circle, we have $k\in\{-1,0,1\}$.

Now pick elements $s, t \in R$. To each edge $e:u\rightarrow v$ of the cube $\cube^n$, we assign a differential
\begin{equation}
\label{eq:dele}
\del^e = \del^e_0 +  {s \del^e_-} +  {st \Phi^e_0} +  {t \Phi^e_+},\end{equation}
where $\del^e_0$ will be a suitable generalization of the Khovanov differential assigned by \Gabrov~ in \cite{Gabrovsek} (see Remark \ref{rmk:new choices vs Gabs choices}).  The values of these various components of the differential depend on the type of edge $e$ that is under consideration. 

\begin{definition}
We say that the edge $e: u \to v$ corresponds to:
\begin{itemize}
\item a {\em 1-2 bifurcation} if $v$ is obtained from $u$ by splitting one circle into two;
\item a {\em 2-1 bifurcation} if $v$ is obtained from $u$ by combining two circles into one;
\item a {\em 1-1 bifurcation} if $v$ is obtained from $u$ by turning a circle into another circle, as in Figure \ref{fig:1-1 bifurcation}.
\end{itemize}
\end{definition}

\begin{figure}
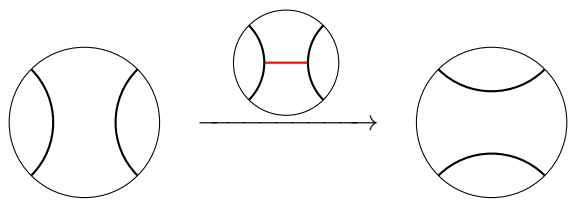

\[\ILtikzpic{
\draw[thick] (45:1) to[out=-135,in=135] (-45:1);
\draw[thick] (135:1) to[out=-45,in=45] (-135:1);
\draw (0,0) circle (1);
	}
\quad\xrightarrow{\quad\ILtikzpic[scale=.7]{
\draw[thick] (45:1) to[out=-135,in=135] (-45:1);
\draw[thick] (135:1) to[out=-45,in=45] (-135:1);
\draw (0,0) circle (1);
\draw[thick,red] (-.4,0)--(.4,0);
	}\quad}\quad
\ILtikzpic{
\draw[thick] (45:1) to[out=-135,in=-45] (135:1);
\draw[thick] (-45:1) to[out=135,in=45] (-135:1);
\draw (0,0) circle (1);
}
\]
\caption{A 1-1 bifurcation, corresponding to an unorientable saddle in $\RP^2\times I$. As in Figure~\ref{fig:K1}, the boundary of each circle is quotiented by the antipodal map.}
\label{fig:1-1 bifurcation}
\end{figure}

\begin{remark}\label{rmk:No 1-1 bifurcations for class-1}
Observe that 1-1 bifurcations do not appear for the usual planar diagrams of links in $S^3$. Moreover, in $\RP^3$, they can only appear for diagrams of non-local class-0 links. 
\end{remark}

Going back to \eqref{eq:dele}, if the edge $e$ corresponds to a 1-1 bifurcation, we set each of the components of $\del^e$ to be identically zero.  Otherwise, $e$ corresponds to either a 2-1 bifurcation  (multiplication $m$) or a 1-2 bifurcation (comultiplication $\Delta$), and then the values of these components, ignoring signs, are described by the following table.

\begin{equation}\label{eq:differential table}
%\begin{center}
\setlength{\extrarowheight}{2pt}{
\begin{tabular}{||CCCCCC||CCCCCCL||}
\hline
m: & V & \otimes & V & \to & V & \Delta : & V & \to & V & \otimes & V & \\
\hline
& 1 & \otimes & 1 &\mapsto & 1 & & 1 & \mapsto & 1 & \otimes & X &+ X \otimes 1\\
& 1 & \otimes & X &\mapsto & X & & X & \mapsto & X & \otimes & X &+  {st(1 \otimes 1)} \\
&X &\otimes &1  &\mapsto & X & & & & & & &\\
&X & \otimes & X  &\mapsto &  {st1} & & & & & & &\\
\hline
\hline 
m: & \ov & \otimes & V & \to & \ov & \Delta: & \ov& \to&\ov & \otimes & V &\\
\hline
& \oone & \otimes & 1 &\mapsto & \oone & & \oone & \mapsto &\oone & \otimes & X &+  {s( \ox \otimes 1)}  \\
& \oone & \otimes & X &\mapsto &  {s\ox} & & \ox &\mapsto & \ox& \otimes & X &+  {t (\oone \otimes 1)}\\
& \ox &\otimes &1  &\mapsto & \ox & & & & & & &\\
& \ox & \otimes & X  &\mapsto &  {t\oone} & & & & & & &\\
\hline
\hline 
m: & V & \otimes & \ov & \to & \ov & \Delta: & \ov& \to & V& \otimes & \ov& \\
\hline
& 1 & \otimes & \oone &\mapsto & \oone & & \oone & \mapsto & X& \otimes& \oone &+  {  s(1 \otimes \ox) }\\
& X & \otimes & \oone &\mapsto &  {s\ox} & & \ox & \mapsto & X& \otimes &\ox &+  { t(1 \otimes \oone)}  \\
& 1 &\otimes &\ox  &\mapsto & \ox & & & & & & &\\
&X & \otimes & \ox  &\mapsto &  {t\oone} & & & & & & &\\
\hline

\end{tabular}
}
%\end{center}
\end{equation}

The reader can verify that, just as for the annular Khovanov-Lee complex of \cite{GLW}, the components of a non-zero differential change the $(i, j, k)$ degrees by
\begin{itemize}
\item $\deg(\del^e_0) = (1, 0, 0), $
\item ${ \deg(\del^e_-) = (1, 0, -2)}, $
\item ${ \deg(\Phi^e_0) = (1, 4, 0)}, $ 
\item ${ \deg(\Phi^e_+) = (1, 4, 2)}.$ 
\end{itemize}
Note that the local orientations of crossings in $D$ and the orientations of the various $D_v$ have not yet been used.

Finally, to decide the sign of any component of the differential $\del^e:\unC(D_u)\rightarrow \unC(D_v)$, we implement the following three rules (these are not applied to 1-1 bifurcations, since those give the zero map).
Let $c$ denote the crossing in $D$ corresponding to the edge $e:u\rightarrow v$, and assign cardinal directions to the neighborhood of $c$ so that the two strands are locally oriented northwest and northeast as in Figure \ref{fig:cardinal directions}.

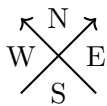
\begin{figure}
	\[\begin{tikzpicture}
	\draw[thick,->] (0,0) -- (1,1);
	\draw[thick,->] (1,0) -- (0,1);
	\node (N) at (.5,1) {N};
	\node (E) at (1,.5) {E};
	\node (S) at (.5,0) {S};
	\node (W) at (0,.5) {W};
	\end{tikzpicture}\]
	\caption{A local orientation at a crossing (which strand is above the other is not relevant) determines local cardinal directions such that the strands are oriented northwest and northeast.}
	\label{fig:cardinal directions}
\end{figure}

\begin{enumerate}
\item[(P)] Permutation Rule (uses the local orientation of $c$ and the ordering of $D_u,D_v$):  For $D_u$ (respectively $D_v$), consider the permutation $\sigma_u$ (respectively $\sigma_v$) of the circles which forces the circle in $D_u$ (resp. $D_v)$ to the west/north of $c$ to be first, forces the other circle near $c$ (if it is a separate circle) to be second, and keeps the other circles (the ones not near $c$) to be in the same order as in the given auxiliary diagram choice. Then all components of $\del^e$ are multiplied by $(\sign(\sigma_u)\cdot\sign(\sigma_v^{-1}))$. 

\item[(O)] Far Orientations Rule (uses orientations of $D_u,D_v$ far from $c$): For any generator $g\in \unC(D_u)$ (labelling of circles in $D_u$ by 1's or $X$'s), let $X^{far}_{e}(g)$ denote the number of circles which are:
  \begin{itemize}
	\item disjoint from the neighborhood of $c$ (and thus equivalent in $D_u$ and $D_v$);
	\item labeled $X$ (or $\ox$) according to the generator $g$; and
	\item oriented differently in $D_v$ compared to $D_u$.
  \end{itemize}
Then the component $\del^e(g)$ is multiplied by $(-1)^{X^{far}_e(g)}$.

\item[(C)] Nearby Consistency Rule (uses orientations of $D,D_u,D_v$ near $c$): For each circle $C$ in $D_u$ or $D_v$ passing through the neighborhood of $c$, the orientation of $C$ can be compared to the local orientation of $c$.  If these orientations match on the northeast corner, we call $C$ \emph{consistent} (else inconsistent).  If they match on the southwest corner, we call $C$ \emph{inconsistent} (else consistent). See Figure \ref{fig:circles consistent}.  Note that, since we assumed we are not at a 1-1 bifurcation, if $C$ contains both the northeast and the southwest corners, then being consistent at one corner is the same as being consistent at the other.

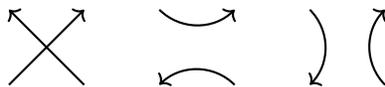
\begin{figure}
	\[\begin{tikzpicture}
	\draw[thick,->] (0,0)--(1,1);
	\draw[thick,->] (1,0)--(0,1);
	\draw[thick,->] (2,1) to[out=-45,in=-135] (3,1);
	\draw[thick,->] (3,0) to[out=135,in=45] (2,0);
	\draw[thick,->] (4,1) to[out=-45,in=45] (4,0);
	\draw[thick,->] (5,0) to[out=135,in=-135] (5,1);
	\end{tikzpicture}\]
	\caption{A local orientation for a crossing determines which orientations for nearby circles are considered locally consistent as shown.  The pre-chosen orientations for the circles in each resolution are compared with these to determine signs according to rule (C).}
	\label{fig:circles consistent}
\end{figure}

Now for any generator $g\in \unC(D_u)$, let $h\in \unC(D_v)$ be a generator which appears as a summand in $\del^e(g)$ (if $e$ corresponds to multiplication $m$, there is one such $h$, while if $e$ corresponds to comultiplication $\Delta$, there are two such $h$), and let $X^{near}_e(g,h)$ denote the number of circles near $c$ which are:
  \begin{itemize}
	\item labeled $X$ (or $\ox$) in either $D_u$ by $g$ or $D_v$ by $h$; and
	\item are inconsistent.
  \end{itemize}
Then the generator $h$ in the sum $\del^e(g)$ is multiplied by $(-1)^{X^{near}_e(g,h)}$.
\end{enumerate}

Altogether then, signs are decided by considering permutations of orderings (P), together with checking orientations of circles labeled by $X$ (or $\ox$) in both $D_u$ and $D_v$.  Such circles which are far from the crossing $c$ are counted if they change orientation (O), while such circles nearby are counted if they compare `unfavorably' with the local orientation of $c$ (C).  These rules may be applied to each component of $\del^e$ separately, or to all of them at once.

\begin{definition}\label{def:unnorm chain complex}
Given a link diagram $D\subset\RP^2$ (oriented or not) with $n$ crossings and a set of auxiliary diagram choices as in Definition \ref{def:auxiliary diagram choices}, we define the following notations:
\[ \unC(D) := \bigoplus_{v\in\cube^n} \unC(D_v), \quad \del := \sum_{\text{edges $e$ in $\cube^n$}} \del^e.\]
\end{definition}

We will refer to $\del$ as a differential in the following lemma, although we do not yet know that $\del^2=0$.  Compare the following to \cite[Lemma 6.2]{Gabrovsek}, which as stated concerns changes in global link diagram orientation, but whose proof hinges on changing local orientation at a single crossing at a time.

\begin{lemma}\label{lem:change local ori gives chain iso}
Fix a link diagram $D\subset\RP^2$, and let $(\unC(D),\del)$ and $(\unC(D)',\del')$ denote two $R$-modules with differentials built as in Definition \ref{def:unnorm chain complex}, which differ only in the auxiliary choice of local orientations at crossings.  Then there exists a natural\footnote{In this paper, whenever we use the word {\em natural}, we mean independent of any other choices.} isomorphism $\unC(D)\xrightarrow{\phi}\unC(D)'$ such that $\phi\circ\del = \del'\circ\phi$.  Moreover, if $\unC(D)''$ is the complex using a third choice of local orientations at crossings, the following diagram of natural isomorphisms commutes.
\begin{equation}\label{eq:naturality of local ori change}
	\begin{tikzcd}
		\unC(D) \ar[r,"\phi"] \ar[rr,bend left,"\phi"] &  \unC(D)' \ar[r,"\phi"] &  \unC(D)''
	\end{tikzcd}
\end{equation}
\end{lemma}
\begin{proof}
We begin by considering the effect of changing the local orientation at one fixed crossing $c$.  Doing so affects the signs in the component of the differential \begin{equation}\label{eq:differential for crossing}
	\del^c:=\sum_{e\sim c} \del^e
\end{equation}
where $e\sim c$ indicates that we are summing over every edge which corresponds to a saddle at $c$.  We claim that, for \emph{any} change of local orientation at $c$, this full sum $\del^c$ changes at most by a single sign.  Specifically, $\del^c$ is negated if and only if the understrand of $c$ is changed (otherwise $\del^c$ is not affected at all).

Up to rotation by $\pi$ there are 2 possible local orientations for $c$ before the change, corresponding to $c$ starting out locally positive or locally negative.  Once this initial choice is made however, this symmetry is broken and there are three possible choices for how to change the local orientation of $c$ (we can change the overstrand, the understrand, or both).  Thus there are six total cases to check.  We analyze one of them below.

Suppose $c$ is initially locally positive, and we change the local orientation of the understrand.  The situation is illustrated in Figure \ref{fig:local orientation change}.  Changing the understrand affects the signs of every component $\del^e$ in Equation \eqref{eq:differential for crossing}.  Some of these edges will correspond to multiplication $m$, and others to comultiplication $\Delta$.

\begin{figure}
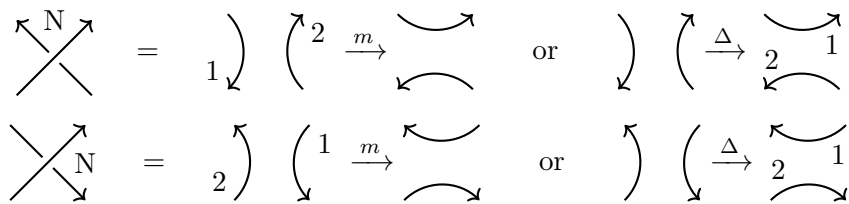

\[
\ILtikzpic{
	\draw[thick,->] (1,0)--(0,1);
	\drawover[thick,->] {(0,0)--(1,1);}
	\node at (.5,1){N};
	}
\quad = \quad
\ILtikzpic{
	\draw[thick,->] (0,1) to[out=-45,in=45] (0,0);
	\draw[thick,->] (1,0) to[out=135,in=-135] (1,1);
	\node at (-.2,.25) {1};
	\node at (1.2,.75) {2};
	}
\xrightarrow{\, m \,}
\ILtikzpic{
	\draw[thick,->] (0,1) to[out=-45,in=-135] (1,1);
	\draw[thick,->] (1,0) to[out=135,in=45] (0,0);
	}
\qquad \text{or} \qquad
\ILtikzpic{
	\draw[thick,->] (0,1) to[out=-45,in=45] (0,0);
	\draw[thick,->] (1,0) to[out=135,in=-135] (1,1);
}
\xrightarrow{\, \Delta \,}
\ILtikzpic{
	\draw[thick,->] (0,1) to[out=-45,in=-135] (1,1);
	\draw[thick,->] (1,0) to[out=135,in=45] (0,0);
	\node at (.1,.4) {2};
	\node at (.9,.6) {1};
}
\]
\[
\ILtikzpic{
	\draw[thick,<-] (1,0)--(0,1);
	\drawover[thick,->] {(0,0)--(1,1);}
	\node at (1,.5){N};
}
\quad = \quad
\ILtikzpic{
	\draw[thick,<-] (0,1) to[out=-45,in=45] (0,0);
	\draw[thick,<-] (1,0) to[out=135,in=-135] (1,1);
	\node at (-.2,.25) {2};
	\node at (1.2,.75) {1};
}
\xrightarrow{\, m \,}
\ILtikzpic{
	\draw[thick,<-] (0,1) to[out=-45,in=-135] (1,1);
	\draw[thick,<-] (1,0) to[out=135,in=45] (0,0);
}
\qquad \text{or} \qquad
\ILtikzpic{
	\draw[thick,<-] (0,1) to[out=-45,in=45] (0,0);
	\draw[thick,<-] (1,0) to[out=135,in=-135] (1,1);
}
\xrightarrow{\, \Delta \,}
\ILtikzpic{
	\draw[thick,<-] (0,1) to[out=-45,in=-135] (1,1);
	\draw[thick,<-] (1,0) to[out=135,in=45] (0,0);
	\node at (.1,.4) {2};
	\node at (.9,.6) {1};
}
\]
\caption{Changing the local orientation at a crossing $c$ affects both rules (P) and (C) for every component $\del^e$ with edge $e$ corresponding to $c$.  One case is illustrated here, where the ordering of the circles changes for multiplication (but not for comultiplication), and all notions of local consistency swap.}
\label{fig:local orientation change}
\end{figure}

For the edges $e$ corresponding to $m$, we swap all notions of local consistency, which one may check has no effect on any signs in $\del^e$ via rule (C) (note that any non-zero multiplication will have an even number of $X$'s involved in the domain and codomain).  However we also swap the order of the circles in the domain, and thus $\del^e$ is negated via rule (P).

For the edges $e$ corresponding to $\Delta$, we again swap all notions of local consistency, which negates $\del^e$ via rule (C) (note that all non-zero comultiplications involve an odd number of $X$'s).  The ordering of the circles in the codomain is maintained, and so rule (P) is not affected.

Thus we see that the local orientation change in Figure \ref{fig:local orientation change} negates the full sum $\del^c$.  The permutative nature of the change ensures that swapping the understrand's orientation again effectively negates $\del^c$ once more (returning it to how it was).  We leave it to the reader to check that swapping the overstrand's orientation does not affect $\del^c$.  The original claim, that $\del^c$ is negated if and only if the understrand is changed, then follows from the permutative nature of the sign changes.

In general then if we make an arbitrary change of local orientations at various crossings, we let $\underline{C}$ denote the set of crossings for which the understrand has changed.  We can define our isomorphism $\phi$ on any $C(D_u)$ to be multiplication by $(-1)^{\sum_{c\in\underline{C}} u_c}$, where $u_c$ is the $c$-th entry of the vector $u\in\cube^n$.  It is a simple exercise then to verify that $\phi$ commutes with the differential $\del$ and that the naturality condition of Equation \eqref{eq:naturality of local ori change} holds.
\end{proof}

\begin{theorem}\label{thm:d squared is zero}
For any link diagram $D\subset \RP^2$, $(\unC(D),\del)$ is a chain complex (i.e. we have $\del^2=0$).
\end{theorem}
\begin{proof}
The proof is a case-by-case check over all 2-crossing link diagrams in $\RP^2$ just as in \cite[Lemmas 6.4,6.5,6.6]{Gabrovsek}.  In that paper it is claimed that \cite[Lemma 6.2]{Gabrovsek} about global link orientations allows one to choose any global orientation for such links (which correspond to 2-dimensional subcubes of the large cube of resolutions for $D$), but due to the fact that a global orientation for $D$ may not descend to a well-defined orientation for these subcube links, one in fact requires the stronger Lemma \ref{lem:change local ori gives chain iso} to make this reduction.

In addition, there is one 4-valent, 2-vertex graph in $\RP^2$ missed in \cite{Gabrovsek} shown in Figure \ref{fig:st=u 4-valent graph}. This is exactly the case that leads to the requirement that the coefficient of the map $\Phi^e_0$ in \eqref{eq:dele} is the product of the coefficients for $\del^e_-$ and $\Phi^e_+$. Indeed, for example, when both crossings in Figure \ref{fig:st=u 4-valent graph} are taken to be positive, we can take the zero resolution at each crossing to arrive at the resolution diagram $D_0$ consisting of two trivial circles and one essential circle.  One can then compute that $$\del^2(\ox \otimes X\otimes X) = \del(\pm t \oone \otimes X \pm st \ox \otimes 1) = \pm(st-st)\ox = 0,$$ indicating the requirement on coefficients.
\end{proof}

\begin{figure}
\[\begin{tikzpicture}[xscale=.3, yscale=.3]
\draw[thick] plot [smooth] coordinates { (0,-3) (0,-1) (1,1) (2,-1) (2.5,0) (2,1) (1,-1) (0,1) (0,3) };
\draw (0,0) circle (3);
%\draw[thick] (0,-3) -- (0,-1) to[out=90,in=180] (1,1) to[out=0,in=180] (2,-1) to[out=0,in=0] (2,1) to[out=180,in=0] (1,-1) to[out=180,in=270] (0,1) -- (0,3);
\end{tikzpicture}\]
\caption{A 4-valent, 2-vertex graph in $\RP^2$ which is missing from Figure 10 in \cite{Gabrovsek}.}
\label{fig:st=u 4-valent graph}
\end{figure}
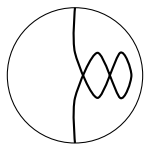

\begin{remark}\label{rmk:w=st in annular}
	In the annular theory of \cite{GLW} one may define analogous deformation parameters $s,t$ for the various graded components of the deformed differential.  In that setting the annular closure of the graph in Figure \ref{fig:st=u 4-valent graph} would lead to the same requirement that  the coefficient of $\Phi^e_0$ in \eqref{eq:dele} is the product of the coefficients for $\del^e_-$ and $\Phi^e_+$.  This is reflected in both theories by noting that $\deg(\del_-^e\circ\Phi_+^e) = \deg(\del_0^e\circ\Phi_0^e)$.
\end{remark}

The complex $(\unC(D),\del)$ will be referred to as the \emph{unnormalized deformed Khovanov complex} for $D$.  As is typical of Khovanov homology theories, an orientation for $D$ allows us to renormalize our gradings by defining
\[C(D):=\unC(D)[-n^-,n^+-2n^-,0],\]
where $[a,b,c]$ indicates a global shift in the tri-grading $(i,j,k)$ and $n^\pm$ is the number of positive/negative crossings in $D$.  We will also use the notation
\[C(D_v):=\unC(D_v)[-n^-,n^+-2n^-,0]\]
for the various normalized chain groups at each vertex in $C(D)$.

\begin{remark}\label{rmk:grading convention}
	Our grading conventions here differ slightly from the conventions in \cite{Gabrovsek}.  In particular, the grading denoted $j$ in \cite{Gabrovsek} is equivalent to $j-k$ in our notation, and our quantum grading $j$ is normalized in $C(D)$ to remove the dependency on the framing of the link.  Our conventions then align more closely with the conventions for annular Khovanov-Lee homology as in \cite{GLW}.
\end{remark}

\begin{remark}
	In \cite{Drobo}, Drobotukhina defined an analogue of the Jones polynomial for oriented links in $\RP^3$. After switching variables from $A$ to $q=-A^{-2}$, let us denote her invariant by $J_L(q)$. Then, it is not hard to check that the Euler characteristic of $C(D)$ is given by
	\begin{equation}
		\label{eq:Drobo}
		\sum_{i, j, k \in \Z} (-1)^i q^j x^k \operatorname{rk } C^{i, j, k}(D) = \begin{cases}
			(q+q^{-1}) \cdot J_{L}(q) & \text{if $L$ is of class-0},\\
			(qx + q^{-1}x^{-1})  \cdot J_{L}(q) & \text{if $L$ is of class-1}.
		\end{cases}
	\end{equation}
	By contrast, with the conventions in \cite{APS, Gabrovsek}, the Euler characteristic of their homology is the element $[L]$ in the Kauffman bracket skein module of $\RP^3$ (the analogue of the usual Kauffman bracket in $S^3$), which depends on the framing of $L$. 
\end{remark}

\begin{definition}
	Given an oriented link diagram $D\subset \RP^2$ and a set of auxiliary diagram choices, the \emph{deformed Khovanov complex} $ \KC_d^*(D)$ is the chain complex $(C(D),\del)$ where we choose $R =\Z[s, t]$, with $s$ and $t$ in \eqref{eq:dele} being the two polynomial variables.
\end{definition}

Observe that $\KC_d^*(D)$ contains enough information to recover all the other complexes $(C(D),\del)$, for any choice of commutative ring $R$ and elements $s, t \in R$. Indeed, this can be done by tensoring with $R$ over $\Z[s, t]$, where we view $R$ as a $\Z[s, t]$-module using the action of the elements $s$ and $t$.

In particular, we have the following two complexes.

\begin{definition}
The \emph{Khovanov complex} $ \KC^*(D)$ (over a commutative ring $R$) is the complex $(C(D),\del)$ where we have chosen $s=t=0$.  This is equivalent to the Khovanov complex defined in \cite{Gabrovsek}, but with alternative grading conventions as in Remark \ref{rmk:grading convention}. (Observe that $\del^e_0$ is the only part of the differential in  \eqref{eq:dele} that contributes to the Khovanov complex.)  The \emph{Khovanov homology} is then $\Kh^*(D)=H^*(\KC(D))$.
\end{definition}

\begin{definition}
The \emph{Lee complex} $LC^*(D)$ (over a commutative ring $R$) is the complex $(C(D),\del)$ where we have chosen  $s=t=1$.  The \emph{Lee homology} is then $LH^*(D)=H^*(LC(D))$.
\end{definition}

We have not included the auxiliary diagram choices of Definition \ref{def:auxiliary diagram choices} in our notation for the complexes above.  This is justified by the following theorem, which is a direct generalization of Lemma \ref{lem:change local ori gives chain iso} and \cite[Lemmas 6.1, 6.3]{Gabrovsek}.

\begin{theorem}\label{thm:  KC is well-defined}
Fix an oriented link diagram $D\subset\RP^2$.  Let $ \KC_d^*(D)$ and $ \KC_d^*(D)'$ denote the deformed complexes assigned to $D$ with two different sets of auxiliary diagram choices.  Then there is a natural  chain isomorphism
\[ \KC_d^*(D) \xrightarrow{\phi}  \KC_d^*(D)'\]
such that, if $ \KC_d^*(D)''$ is the complex using a third set of auxiliary diagram choices, the following diagram of natural isomorphisms commutes.
\begin{equation}\label{eq:composing natural isos for different auxiliary diagram choices}
\begin{tikzcd}
	 \KC^*_d(D) \ar[r,"\phi"] \ar[rr,bend left,"\phi"] &  \KC^*_d(D)' \ar[r,"\phi"] &  \KC^*_d(D)''
\end{tikzcd}
\end{equation}
\end{theorem}
\begin{proof}
The map $\phi$ is determined on each resolution as in Lemma \ref{lem:change local ori gives chain iso} and \cite[Lemmas 6.1, 6.3]{Gabrovsek}.  That is to say, if $g$ denotes a labelling of circles corresponding to a generator in $C(D_u)$, then $\phi(g)= \pm g$ with the same labels in the corresponding $C(D_u)'$.  The sign is determined by the changes of local orientations of crossings, and of circle orientations and orderings in $D_u$ as follows.

Suppose $D_u$ has $r$ circles.  Let $\sigma\in \Sigma_r$ denote the permutation of the circles in $D_u$ relating the ordering with the two auxiliary diagram choices.  Then for each $i=1,\dots,r$, let $\epsilon_i = 1$ if the orientation of the $i^{\text{th}}$ circle with respect to the auxiliary diagram choices used to define $C(D_u)$ matches the orientation of the $\sigma(i)^{\text{th}}$ circle with respect to the auxiliary diagram choices used to define $C(D_u)'$, and $\epsilon_i = -1$ otherwise.  Finally, let $X_u(g) \subset \{1,\dots,r\}$ denote the set of circles in $C(D_u)$ which are labeled $X$ (or $\ox$) by the generator $g$.  Then letting $\underline{C}$ denote the set of crossings where the understrand has changed (see the proof of Lemma \ref{lem:change local ori gives chain iso}), the map $\phi$ is given by the formula
\begin{equation}\label{eq:natural iso formula}
	\phi(g) := \Bigl( (-1)^{\sum_{c\in\underline{C}}u_c} \sign(\sigma) \prod_{i\in X_u(g)} \epsilon_i \Bigr) g.
\end{equation}
In words, any entry of $u\in\cube^n$ which is a 1 contributes a minus sign if the corresponding crossing had its understrand's local orientation changed.  Any circle labeled by $X$ in $g$ which changes orientation in $D_u$ also contributes a minus sign.  Finally, if the change in ordering of circles was given by an odd permutation, we include an extra minus sign.  Note that this extra sign is inherent in \Gabrov's own argument \cite{Gabrovsek} where $C(D_u)$ is defined as an exterior product rather than a tensor product.

The proof that $\phi$ commutes with the differential is similar to that in Lemma \ref{lem:change local ori gives chain iso} and \cite[Lemmas 6.1, 6.3]{Gabrovsek}.  The naturality with respect to composition given by diagram \eqref{eq:composing natural isos for different auxiliary diagram choices} follows from the formula of Equation \eqref{eq:natural iso formula} since the sign map respects composition of permutations.
\end{proof}

Finally, we turn to invariance under Reidemeister moves to show that the deformed Khovanov complex is in fact a link invariant.  For links in $\RP^3$ there are five Reidemeister moves, illustrated in \cite[Figure 3]{Drobo}.  These include the three classical Reidemeister moves needed for links in $S^3$, together with two additional moves R-IV and R-V which allow crossings and turnbacks to `pass through the boundary' of our $\RP^2$ of projection as shown in Figure \ref{fig:Reid 4 and 5}.

\begin{figure}
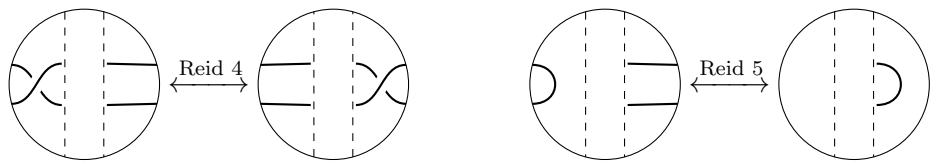

\[\ILtikzpic[scale=.5]{
\draw[thick] (165:2) to[out=0,in=180] (-.6,-.55);
\drawover[thick]{(195:2) to[out=0,in=180] (-.6,.55);}
\draw[thick] (15:2) -- (.6,.55);
\draw[thick] (-15:2)--(.6,-.55);
\draw (0,0) circle (2);
\draw[dashed] (105:2) -- (255:2);
\draw[dashed] (75:2) -- (285:2);
}
\xleftrightarrow{\text{Reid 4}}
\ILtikzpic[scale=-.5]{
	\draw[thick] (165:2) to[out=0,in=180] (-.6,-.55);
	\drawover[thick]{(195:2) to[out=0,in=180] (-.6,.55);}
	\draw[thick] (15:2) -- (.6,.55);
	\draw[thick] (-15:2)--(.6,-.55);
	\draw (0,0) circle (2);
	\draw[dashed] (105:2) -- (255:2);
	\draw[dashed] (75:2) -- (285:2);
}
\qquad\qquad
\ILtikzpic[scale=.5]{
	\draw[thick] (165:2) to[out=0,in=0,looseness=2] (195:2);
	\draw[thick] (15:2) -- (.6,.55);
	\draw[thick] (-15:2)--(.6,-.55);
	\draw (0,0) circle (2);
	\draw[dashed] (105:2) -- (255:2);
	\draw[dashed] (75:2) -- (285:2);
}
\xleftrightarrow{\text{Reid 5}}
\ILtikzpic[scale=.5]{
	\draw[thick] (.6,.55) to[out=0,in=0,looseness=2] (.6,-.55);
	\draw (0,0) circle (2);
	\draw[dashed] (105:2) -- (255:2);
	\draw[dashed] (75:2) -- (285:2);
}
\]
\caption{Reidemeister 4 and 5 moves for link diagrams in $\RP^2$.}
\label{fig:Reid 4 and 5}
\end{figure}

\begin{theorem}\label{thm:Reidemeister moves give chain homotopy equivalences}
Let $D'\subset \RP^2$ be a link diagram obtained from $D\subset \RP^2$ by performing one of the five Reidemeister moves.  Then there is a chain homotopy equivalence of $\Z[s, t]$-modules
\begin{equation}
\label{eq:phir}
 \KC_d^*(D) \xrightarrow{\phi_R}  \KC_d^*(D').
 \end{equation}
In particular, for any link $L\subset\RP^3$, the homology groups $ \Kh_d^*(L)$ are well-defined link invariants (as are the Khovanov homology $\Kh^*(L)$ and Lee homology $LH^*(L)$).
\end{theorem}
\begin{proof}
As in \cite[Section 6]{Gabrovsek}, the proofs of the the classical Reidemeister moves II and III follow along precisely the same reasoning as they did for links in $S^3$, in \cite{Kh, Lee}.  That is to say, in all three cases, one is able to identify acyclic subcomplexes and quotient complexes which can be removed up to chain homotopy.  These acyclic complexes arise from fixing single labels ($1$ or $X$) on trivial circles that appear in certain resolutions---the fact that no essential circles are used in identifying these complexes ensures that the arguments are equivalent whether working in $S^3$ or $\RP^3$.

In \cite{Gabrovsek}, invariance under Reidemeister I is not discussed, since the homology there is presented as an invariant of framed links.  However, the argument for Reidemeister I in \cite{Kh, Lee} translates from $S^3$ to $\RP^3$ in precisely the same manner as the other two moves, since the complex for a diagram with a kink has a disjoint trivial circle which is, once again, providing an acyclic subcomplex to collapse.

Meanwhile, the fourth and fifth Reidmeister moves trivially induce chain isomorphisms since there is a natural isotopy in $\RP^2$ between the various resolutions of the diagrams before and after these moves.  Note that such an isotopy allows any set of auxiliary diagram choices before the move to determine a set of auxiliary diagram choices for after the move, to ensure that the corresponding map is indeed a chain map (i.e., that it respects the signs in the differential).
\end{proof}

As with all diagrammatic link homology theories coming from Khovanov cubes of resolutions, there are skein exact sequences associated to crossings in a diagram.

\begin{proposition}\label{prop:skein exact seq}
Fix an oriented link diagram $D\subset\RP^2$.  For any fixed crossing $c$ of $D$, let $D_0,D_1$ denote the link diagrams arising from taking the zero- and one-resolutions of $D$ at $c$, oriented in any fixed manner.  Then for any sets of auxiliary diagram choices for $D,D_0,D_1$ we have a short exact sequence of chain complexes (ignoring grading shifts)
\[0\rightarrow \KC_d(D_1) \rightarrow \KC_d(D) \rightarrow \KC_d(D_0) \rightarrow 0,\]
which induces a long exact sequence on homology
\[\cdots \rightarrow \Kh_d(D_1)\rightarrow \Kh_d(D) \rightarrow \Kh_d(D_0) \rightarrow \Kh_d(D_1) \rightarrow \cdots\]
The grading shifts omitted here are determined by the orientations chosen for $D,D_0,D_1$.
\end{proposition}
\begin{proof}
Given a set of auxiliary diagram choices for $D$, it is clear from the definition of the unnormalized complex that $\unC(D)\cong\mathrm{Cone}(\unC(D_0)\xrightarrow{\del^c} \unC(D_1)),$ where the auxiliary diagram choices for each $\unC(D_i)$ are inherited from the choices for $D$. This gives rise to a short exact sequence
\[0\rightarrow \unC(D_1) \rightarrow \unC(D) \rightarrow \unC(D_0) \rightarrow 0\]
for these fixed auxiliary diagram choices.  The proposition then follows from Theorem \ref{thm:  KC is well-defined} together with the fact that $\KC_d(D)$ and $\KC_d(D_i)$ are simply renormalizations of the complexes $\unC(D)$ and $\unC(D_i)$, with shifts determined by orientations of the link diagrams.
\end{proof}

We end this section with some basic properties of the complexes inherited from Lemma \ref{lem:ess circ in Dv iff L is class-1}.

\begin{comment}
\begin{proposition}\label{prop:symmetry for class 1}
Let $D\subset\RP^2$ be a link diagram for a link $L\subset\RP^3$.
\begin{itemize}
\item If $L$ is class-1, then every generator in $ \KC^*_d(D)$ has $k$-grading $\pm 1$.  In this case there are  involutions $ \KC^*(D)\xrightarrow{\Theta}  \KC^*(D)$ and $LC^*(D)\xrightarrow{\Theta} LC^*(D)$ (compare \cite[Lemma 4]{GLW}) which interchange the labels $\oone$ and $\ox$ on the essential circle in each resolution $D_v$ of $D$ (while maintaining all other labels).
\item If $L$ is class-0, then every generator in $ \KC^*_d(D)$ has $k$-grading zero, and the involution described above is the identity. 
\end{itemize}
\end{proposition}
\end{comment}

\begin{proposition}\label{prop:symmetry for class 1}
Let $D\subset\RP^2$ be a link diagram for a link $L\subset\RP^3$.
\begin{itemize}
\item If $L$ is class-1, then every generator in $ \KC^*_d(D)$ has $k$-grading $\pm 1$.  
\item If $L$ is class-0, then every generator in $ \KC^*_d(D)$ has $k$-grading zero. 
\item Consider the involutions $ \KC^*(D)\xrightarrow{\Theta}  \KC^*(D)$ and $LC^*(D)\xrightarrow{\Theta} LC^*(D)$ (compare \cite[Lemma 4]{GLW}) which interchange the labels $\oone$ and $\ox$ on the essential circles in the resolutions $D_v$ of $D$ (while maintaining all other labels). Then $\Theta$ are chain maps, producing involutions between the respective homologies. When $L$ is class-0, these involutions are the identity, whereas when $L$ is class-1, they produce isomorphisms between the homology in $k$-grading $+1$ and that in $k$-grading $-1$.
\end{itemize}
\end{proposition}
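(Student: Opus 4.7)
The first two bullet points follow immediately from the grading table and Lemma~\ref{lem:ess circ in Dv iff L is class-1}. In the class-1 case, each resolution $D_v$ has exactly one essential circle, whose label $\oone$ or $\ox$ contributes $k = \pm 1$, while all trivial circle labels contribute $k = 0$; hence every generator has total $k$-grading $\pm 1$. In the class-0 case, no resolution contains an essential circle, so every generator has $k = 0$. In particular $\Theta$ is well-defined on each $C(D_v)$, and clearly satisfies $\Theta^2 = \mathrm{id}$.

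The main task for the third bullet is to verify that $\Theta \circ \del = \del \circ \Theta$, which I would do edge-by-edge in $\cube^n$. For 1-1 bifurcations, $\del^e = 0$ and the identity is automatic. For 2-1 or 1-2 bifurcations not involving an essential circle, $\Theta$ acts as the identity on every factor changed by $\del^e$. For 2-1 or 1-2 bifurcations in which the essential circle participates, the three tables in \eqref{eq:differential table} are invariant under the simultaneous swap $\oone \leftrightarrow \ox$ together with $s \leftrightarrow t$; since both the Khovanov complex ($s=t=0$) and the Lee complex ($s=t=1$) satisfy $s=t$, the algebraic part of $\del^e$ already commutes with $\Theta$. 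The remaining sign contributions come from rules (P), (O), (C). Rule (P) is unaffected, since $\Theta$ does not alter orderings or the orientation of $D$. For rule (O), using Theorem~\ref{thm:  KC is well-defined} I would arrange the resolution data so that an essential circle carries the same orientation across all resolutions in which it is not involved in a bifurcation; then far essential circles contribute $0$ to $X^{far}_e$ regardless of label, and (O) is preserved by $\Theta$. For rule (C), a local case analysis at 2-1 and 1-2 edges involving the essential circle, following the pattern of \cite[Lemma 4]{GLW}, shows that the $X^{near}_e$ signs combine with the $s \leftrightarrow t$ symmetry of \eqref{eq:differential table} to produce maps commuting with $\Theta$.

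The two concluding assertions are then immediate. When $L$ is class-0 no essential circles appear, so $\Theta$ is literally the identity on $\KC^*(D)$ and on $LC^*(D)$, and hence induces the identity on homology. When $L$ is class-1, the swap on the unique essential circle in each $D_v$ interchanges the subspaces of $k$-grading $+1$ and $-1$, and since $\Theta$ is a chain-map involution it descends to an isomorphism between the $k=+1$ and $k=-1$ homology groups. The main obstacle I anticipate is the sign bookkeeping in rule (C) when the essential circle participates in a bifurcation; the cleanest route is to isolate the $\oone \leftrightarrow \ox$ symmetry in \eqref{eq:differential table} and then verify compatibility with the local consistency data by direct inspection of each 2-1 and 1-2 configuration, exactly as in the annular Khovanov-Lee setting of \cite{GLW}.
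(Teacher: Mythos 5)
Your approach matches the paper's: the first two bullets follow from Lemma~\ref{lem:ess circ in Dv iff L is class-1} and the grading table, and the chain-map claim rests on the observation that when $s=t$ the unsigned entries of Table~\eqref{eq:differential table} are preserved by the simultaneous swap $\oone\leftrightarrow\ox$, $s\leftrightarrow t$. The paper's own proof is terse on the sign side (it simply cites \cite{GLW}), so your more explicit rule-by-rule treatment is welcome.

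There is, however, a subtle gap in your handling of rule (C). You claim that the $X^{near}_e$ signs "combine with the $s\leftrightarrow t$ symmetry to produce maps commuting with $\Theta$," but this is not true for arbitrary resolution choices. Consider a $2$-$1$ merge $m\colon\ov\otimes V\to\ov$ in which the two merging circles in $D_u$ are both consistent but the merged essential circle in $D_v$ is inconsistent. Then $\del^e(\oone\otimes 1)=+\oone$ (no inconsistent $X$-labels), while $\del^e(\ox\otimes 1)=-\ox$ (the inconsistent merged circle carries $\ox$), so $\Theta\bigl(\del^e(\oone\otimes 1)\bigr)=\ox\neq -\ox=\del^e\bigl(\Theta(\oone\otimes 1)\bigr)$. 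This bad configuration is exactly what is ruled out by choosing orientations so that the saddles are orientable, as in Lemma~\ref{lem:signs for oriented saddle} and Theorem~\ref{thm:essential circle in resolutions gives sign assignment}: with oriented saddles the consistency type of the circles near $c$ is preserved across the merge, and all generators at an edge receive the same overall sign, after which the $s=t$ symmetry of the unsigned table does finish the argument. You make precisely this kind of choice to dispose of rule (O), but you need it for rule (C) as well; a bare "case analysis" at the $2$-$1$ and $1$-$2$ edges will not close the argument without first fixing a resolution choice with orientable saddles (e.g.\ via Theorem~\ref{thm:essential circle in resolutions gives sign assignment}), and then observing that the resulting chain map transfers to other resolution choices through the natural isomorphisms of Theorem~\ref{thm:  KC is well-defined}.
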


\begin{proof}
Lemma \ref{lem:ess circ in Dv iff L is class-1} implies the $k$-grading statements immediately.  If $s=t$, Table \eqref{eq:differential table} can then be used to verify that $\Theta$ is a chain map, just as in \cite{GLW,GW:AnnLinks}. We have $s=t$ for both the Khovanov complex and the Lee complex. When $L$ is class-0, there are no essential circles in any resolutions, so $\Theta$ is clearly the identity. When $L$ is class-1, the map $\Theta$ interchanges the $k$-gradings $1$ and $-1$; being an involution, it must be an isomorphism.
\end{proof}

\begin{remark} \label{rem:bigraded}
While the Khovanov homology $\Kh^*(L)$ is a priori triply graded, Proposition~\ref{prop:symmetry for class 1} makes it clear that the last grading $k$ is not essential, because all the information is contained in the bigraded piece $\Kh^{*, *, 0}$ when $L$ is class-0 and in $\Kh^{*,*,1}$ when $L$ is class-1. This is similar to how the Euler characteristic $\chi(\Kh^*(L))$ is determined by the polynomial $J_L(q)$ in a single variable; see \eqref{eq:Drobo}.
\end{remark}

\subsection{Simplifying the signs}
The signs involved in the differential $\del$ for $\KC^*_d(D)$ are determined by the set of auxiliary diagram choices for $D$.  In this section we seek to simplify these choices.  We note first that an orientation for $D$ induces a natural choice of local orientation at each crossing (this is the choice being implicitly made in \cite{Gabrovsek}).  From this point forward we assume that, for oriented $D$, we are making this choice of local orientations unless stated otherwise.

From here we are left with the choices of circle orderings and orientations in the various resolutions $D_v$.  However, not all of these choices affect the signs in the same way.  Note that, in general, the sign rule (P) indicates that a choice of circle ordering affects an entire edge map $\del^e$ at a time, while rules (O) and (C) indicate that a choice of orientations may affect different generators differently.  Thus it is the choice of orientations for the various $D_v$ which contributes to the main complication in the signs.  Fortunately there are certain choices of convenient orientations for a given $D_v$ which can be packaged via a single choice of curve in $\RP^2$.

\begin{definition}
\label{def:dividing circle}
A \emph{dividing circle} for a resolution diagram $D_v$ is an oriented essential circle $\mc{C}$ in $\RP^2$ such that 
\[
\mc{C}\cap D_v = \begin{cases}
		\mc{C} & \text{if $D_v$ contains an essential circle,}\\
		\varnothing & \text{otherwise.}
	\end{cases}
\]
In either case, a dividing circle $\mc{C}$ for $D_v$ induces an orientation $o_{\mc{C}}$ on $D_v$ by orienting each circle in alternating fashion according to its distance from $\mc{C}$.  Specifically, the complement of $\mc{C}$ in $\RP^2$ is a disk, which we orient so that its boundary orientation coincides with that of $\mc{C}$; that is, if we identify the disk with the standard $B^2$ preserving orientations, the circle $\mc{C}$ should be oriented counterclockwise. Given a circle $\mc{C}'\neq \mc{C}$ in $D_v$, we let its distance from $\mc{C}$ be the minimal number of other circles in $D_v$ that a path from $\mc{C}$ to $\mc{C}'$ needs to intersect, plus $1$. Then, we orient $\mc{C}'$ counterclockwise (via the identification with the standard disc $B^2$) if and only if this distance is even.  See Figure \ref{fig:Div circ example}.\end{definition}

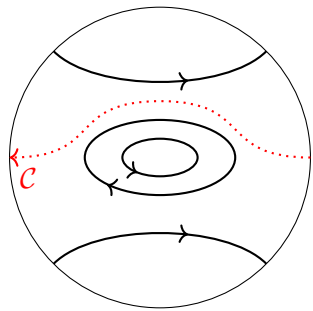
\begin{figure}
	\[\begin{tikzpicture}[scale=.5]
		\draw (0,0) circle (4);
		\draw[thick,
		decoration={markings, mark=at position 0.625 with {\arrow{>}}}, postaction={decorate}]
		(0,0) ellipse (1 and .5);
		\draw[thick,
		decoration={markings, mark=at position 0.625 with {\arrow{<}}}, postaction={decorate}]
		(0,0) ellipse (2 and 1);
		\draw[thick,
		decoration={markings, mark=at position 0.625 with {\arrow{>}}}, postaction=decorate]
		(135:4) to[out=-45,in=-135,looseness=.7] (45:4);
		\draw[thick,
		decoration={markings, mark=at position 0.625 with {\arrow{>}}}, postaction={decorate}]
		(-135:4) to[out=45,in=135,looseness=.7] (-45:4);
		\draw[thick,dotted,red,->]
		(4,0) to[out=180,in=0,looseness=1.5] (0,1.5) to[out=180,in=0,looseness=1.5] (-4,0) node[below right]{$\mc{C}$};
	\end{tikzpicture}\]
\caption{For a resolution diagram $D_v\subset \RP^2$, a choice of oriented dividing circle $\mc{C}$ (illustrated here with a dotted red curve) induces an orientation $o_{\mc{C}}$ on $D_v$.  If $D_v$ contains an essential circle, we choose $\mc{C}$ to be this circle.  Note that $o_{\mc{C}}$ allows any saddle cobordism to or from $D_v$ which does not pass through $\mc{C}$ to be orientable (including the case when $\mc{C}$ is an essential circle in $D_v$ oriented accordingly).}
\label{fig:Div circ example}
\end{figure}

\begin{remark}
A similar choice of orientations for resolutions of planar diagrams was described in \cite[Section 2.4]{Rasmussen}, using the distance from infinity.
\end{remark}

The orientations $o_\mc{C}$ of Definition \ref{def:dividing circle} will be used throughout the paper to simplify various computations involving signs, and will also be used to define the generators of Lee homology.  Furthermore, there are some cases where these orientations can be used to eliminate all of the sign complications coming from rules (O) and (C).  The key to realizing this is the following lemma about terms in the differential corresponding to oriented saddles.

\begin{lemma}\label{lem:signs for oriented saddle}
Let $u\xrightarrow{e}v$ be an edge in the cube $\cube^n$, with corresponding differential component $C(D_u)\xrightarrow{\del^e}C(D_v)$ between resolutions of the link diagram $D\subset\RP^2$.  Suppose orientations $o_u,o_v$ have been chosen for $D_u,D_v$ respectively such that the saddle cobordism $D_u\xrightarrow{s}D_v$ is orientable with respect to $o_u,o_v$.  Then the map $\del^e$ sends all generators $g\in C(D_u)$ to sums of generators in $C(D_v)$ of the same sign.  In other words, rules (O) and (C) affect the entire edge map $\del^e$ equivalently, just as rule (P) does.
\end{lemma}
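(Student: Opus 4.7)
The plan is to treat rules (O) and (C) separately and show that, under the orientable saddle hypothesis, each contributes a sign depending only on the edge $e$, so that the conclusion then follows from the analogous property of rule (P), which holds by construction.

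The argument for rule (O) is short. Circles far from $c$ are unaffected by the local bifurcation, so they are literally the same circles in $D_u$ and $D_v$. An orientable saddle provides an orientation on the cobordism whose boundary restrictions are $o_u$ and $o_v$; since the cobordism is a product away from a neighborhood of $c$, the orientations of every far circle must agree between $o_u$ and $o_v$. Hence $X^{far}_e(g) = 0$ for every $g$, and rule (O) contributes trivially.

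The bulk of the work is rule (C). The key claim I would establish is: for an orientable saddle that is not a 1-1 bifurcation, all circles of $D_u$ and $D_v$ passing through a neighborhood of $c$ share a common consistency status with respect to $D$ (either all consistent or all inconsistent). This follows from a direct local analysis: near $c$ the saddle is a band whose orientation determines orientations on its four boundary arcs, two belonging to $D_u$ and two to $D_v$. Using a standard local model such as $\{(x,y,t)\in\R^2\times I : x^2-y^2 = 1/2-t\}$, one computes the induced boundary orientations (taking care with the cobordism convention that reverses the boundary orientation on $D_u$ relative to the outward-normal convention) and verifies that, when compared against the fixed orientation of $D$ at $c$, they yield the same consistency label at the NE corner (equivalently, the SW corner) for every near circle.

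Granted the uniform-consistency claim, I would verify case by case using the differential table \eqref{eq:differential table} that the parity of $X^{near}_e(g,h)$ is independent of $g$ and $h$. For a 2-1 bifurcation (multiplication), every nonzero entry of each of the components $\del^e_0,\del^e_-,\Phi^e_0,\Phi^e_+$ involves an even total number of $X$- or $\ox$-labels across the three near circles $C_1, C_2, C$, so the parity is $0$ in both the all-consistent case (the count vanishes) and the all-inconsistent case (the count equals the number of $X$-labels, always even). For a 1-2 bifurcation (comultiplication), every nonzero summand has an odd total count across the three near circles $C, C_1', C_2'$; so the parity is $0$ if all are consistent and $1$ if all are inconsistent, but in either case uniform across summands. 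In both bifurcation types the sign from rule (C) is thus constant over the entire edge map.

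The main obstacle is the uniform-consistency claim. Although intuitively natural, it requires careful bookkeeping of the induced orientations from the band's orientation at each of the NE, NW, SE, and SW corners, and of the two fixed strands of $D$ at $c$. The hypothesis that $e$ is not a 1-1 bifurcation is essential: in a 1-1 bifurcation the band wraps through $\RP^2$ in a way that breaks the uniform orientation pattern, and one cannot expect a uniform consistency among the near circles.
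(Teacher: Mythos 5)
Your proposal is correct and follows essentially the same two-step argument as the paper: orientability forces far circles to retain their orientation (so rule (O) contributes nothing) and forces all near circles to share one consistency status, after which a parity count on the differential table ($X$-count even for $m$, odd for $\Delta$) shows rule (C) contributes a uniform sign. The only minor quibble is your closing remark that excluding 1--1 bifurcations is a separate essential hypothesis: in fact the orientability assumption already rules them out (a 1--1 saddle is non-orientable), so this is subsumed rather than an additional assumption.
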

\begin{proof}
For an orientable saddle $s$, orientations on distant circles are maintained, so that rule (O) cannot contribute anything.  Meanwhile, for rule (C), one sees that the consistency of each nearby circle is maintained from domain to codomain, and for both $m$ and $\Delta$ the parity of the $X$-counts is constant amongst all generators (see also the similar argument in the proof of Lemma \ref{lem:change local ori gives chain iso}).  This indicates that all generators are affected equally by rule (C).
\end{proof}

\begin{definition}[Definition 4.5 in \cite{LS}]
A {\em sign assignment} for the cube $\cube^n$ consists in signs $\pm 1$ assigned to each edge of the cube, such that the product of the signs along the edges of any $2$-dimensional face is $-1$.
\end{definition}

With the help of Lemma \ref{lem:signs for oriented saddle}, we can simplify the signs on all edges simultaneously for both local links and class-1 links.

\begin{theorem}\label{thm:local link is same as S3}
If $D\subset\RP^2$ is a local link diagram for the (necessarily local) link $L$, then the complex $LC^*(D)$ (respectively $ \KC^*(D)$) is chain isomorphic to the usual Lee complex (respectively Khovanov complex) for $D$ viewed as a diagram in $B^2$ for a link in $S^3$.  
\end{theorem}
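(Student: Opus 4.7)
The plan is to show that for a local diagram we can choose resolution data that makes both the chain groups and the (unsigned) differentials of $LC^*(D)$ and $\KC^*(D)$ literally agree with the standard Lee and Khovanov complexes in $S^3$, leaving only a sign discrepancy that is absorbed by a gauge transformation on the cube.

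First, since $L$ is local, Lemma~\ref{lem:ess circ in Dv iff L is class-1} implies that no resolution $D_v$ contains a homologically essential circle; every $D_v$ consists of trivial circles inside $B^2$. Thus each chain group $C(D_v)$ is a tensor product of copies of $V$ alone, matching the chain groups of the usual Khovanov/Lee complex on $D\subset B^2\subset S^2$. Because $\ov$ never appears, the differential components $\del^e_-$ and $\Phi^e_+$ vanish identically, and no edge corresponds to a 1-1 bifurcation (Remark~\ref{rmk:No 1-1 bifurcations for class-1}). The remaining components $\del^e_0$ and $\Phi^e_0$ are governed by the top block of Table~\eqref{eq:differential table}; for the Lee complex ($s=t=1$) this is precisely the standard Lee multiplication and comultiplication on $V$, while for the Khovanov complex ($s=t=0$) only $\del^e_0$ survives and gives the usual Khovanov maps.

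Next, I would deal with the signs. Pick a single dividing circle $\mc{C}$ entirely contained in $\RP^2\setminus B^2$ and use the induced orientation $o_{\mc{C}}$ (Definition~\ref{def:dividing circle}) for every resolution $D_v$; since all circles lie inside $B^2$, the distance-from-$\mc{C}$ orientation matches the standard Seifert-circle-style orientation used in the planar case (cf.\ the remark following Definition~\ref{def:dividing circle}). With this uniform choice, the saddle cobordism associated to any cube edge $u\xrightarrow{e}v$ takes place inside $B^2$ and is orientable with respect to $o_{\mc{C}}$ at both source and target, so Lemma~\ref{lem:signs for oriented saddle} ensures that rules (O) and (C) contribute a single overall sign to $\del^e$. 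Combined with the permutation sign from rule (P), each edge map equals a fixed sign $\epsilon_e\in\{\pm 1\}$ times the standard unsigned map. Since $\del^2=0$ by Theorem~\ref{thm:d squared is zero}, the collection $\{\epsilon_e\}$ is automatically a sign assignment on $\cube^n$.

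Finally, any two sign assignments on $\cube^n$ are equivalent via a gauge transformation consisting of sign flips at vertices, so the resulting complex is chain isomorphic to the standard Lee (respectively Khovanov) complex for $D$ viewed as a diagram in $B^2$. The main obstacle is the careful bookkeeping of signs: one must verify that the uniform choice $o_{\mc{C}}$ actually makes every cube-edge saddle orientable (so that Lemma~\ref{lem:signs for oriented saddle} applies on every edge at once), and that the resulting per-edge signs form a valid sign assignment; once these are in place, the identification of chain groups and of unsigned maps is direct from the absence of $\ov$-factors, and the conclusion follows from gauge-uniqueness of sign assignments.
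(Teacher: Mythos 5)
Your proposal is correct and follows essentially the same route as the paper: choose a dividing circle in $\RP^2\setminus B^2$ so that the induced orientation $o_{\mc{C}}$ makes every saddle oriented, apply Lemma~\ref{lem:signs for oriented saddle} so that signs become edge-wise, and conclude by uniqueness of sign assignments on a commuting cube (the paper cites \cite[Proof of Proposition 6.1]{LS} for this last step). Your added observations—that no essential circles or $1$-$1$ bifurcations occur so $\del^e_-$ and $\Phi^e_+$ vanish and the unsigned maps are literally the $S^3$ ones, and that $\del^2=0$ forces $\{\epsilon_e\}$ to be a sign assignment—make explicit what the paper leaves implicit, and are valid.
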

\begin{proof}
By Theorem \ref{thm:  KC is well-defined}, we may choose any set of orientations on the various $D_u$ that we like.  We will do so via dividing circles as in Definition \ref{def:dividing circle}.  Since $D$ is a local link diagram, we may choose a single oriented essential circle $\mc{C}\subset\RP^2\setminus D$ which acts as a dividing circle for all the various resolutions $D_u$.  This circle $\mc{C}$ then induces orientations $o_{\mc{C}}$ on each $D_u$ in such a way that all saddles in the cube of resolutions are oriented.  Lemma \ref{lem:signs for oriented saddle} then implies that all signs are determined edge-wise only, so that the cube must be a commuting cube with an edge sign assignment, and it is known that all such sign assignments give isomorphic complexes; see for example \cite[Lemma 5.7 and Proposition 5.8]{Putyra}.
\end{proof}

\begin{theorem}\label{thm:essential circle in resolutions gives sign assignment}
If $D$ is the diagram of a class-1 link $L$ in $\RP^3$, then the complexes $ \KC^*_d(D)$, $LC^*(D)$, $ \KC^*(D)$ are each isomorphic to corresponding complexes determined by the unsigned Table~\eqref{eq:differential table},  together with a choice of sign assignment for the cube $\cube^n$.
\end{theorem}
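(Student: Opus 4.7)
The strategy mirrors the proof of Theorem~\ref{thm:local link is same as S3}: the plan is to exhibit orientations on all resolutions $D_v$ under which every saddle in the cube is orientable, so that Lemma~\ref{lem:signs for oriented saddle} applies to the full edge map $\del^e$ and to all four of its graded components $\del_0^e,\del_-^e,\Phi_0^e,\Phi_+^e$ simultaneously. Then each $\del^e$ becomes a single sign $\epsilon_e\in\{\pm 1\}$ times the unsigned edge map read off Table~\eqref{eq:differential table}, and I would promote $\{\epsilon_e\}$ to a sign assignment on $\cube^n$ using $\del^2=0$.

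The new input compared to Theorem~\ref{thm:local link is same as S3} is that for a class-$1$ link, no essential circle in $\RP^2$ can be disjoint from every $D_v$, so a single dividing circle will not suffice. Instead, I would use for each $v$ the unique essential circle $\mc{C}_v$ contained in $D_v$ itself (guaranteed by Lemma~\ref{lem:ess circ in Dv iff L is class-1}) as the dividing circle for $D_v$, inducing orientations $o_{\mc{C}_v}$. Since any two essential simple closed curves in $\RP^2$ are isotopic, I would fix an orientation of this isotopy class once and transport it to each $\mc{C}_v$, so that the various dividing circles receive compatible orientations.

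The core step is then to verify that for every edge $e:u\to v$ the saddle cobordism from $D_u$ to $D_v$ is orientable with respect to $(o_{\mc{C}_u},o_{\mc{C}_v})$. Here the key structural input is Remark~\ref{rmk:No 1-1 bifurcations for class-1}: every saddle is a $1$-$2$ or $2$-$1$ bifurcation, hence locally orientable. If the saddle is between two trivial circles only, then $\mc{C}_u=\mc{C}_v$ and the check reduces, within the disk $\RP^2\setminus\mc{C}_u$, to Rasmussen's standard planar convention. If the saddle instead absorbs a trivial circle into the essential one or splits one off, I would choose $\mc{C}_u$ and $\mc{C}_v$ to coincide outside a small disk containing the crossing and verify the orientation matching along the saddle band directly from Definition~\ref{def:dividing circle}, using the counterclockwise convention on each side of the essential curve.

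Once all saddles are orientable, Lemma~\ref{lem:signs for oriented saddle} gives $\del^e=\epsilon_e\widetilde{\del}^e$ with a single sign per edge, where $\widetilde{\del}^e$ is the unsigned map from Table~\eqref{eq:differential table}; the relation $\del^2=0$ on each $2$-face of the cube (Theorem~\ref{thm:d squared is zero}) then forces $\{\epsilon_e\}$ to satisfy the sign-assignment axiom, and any two sign assignments on $\cube^n$ produce isomorphic complexes via diagonal rescaling of generators, as in \cite[Proposition~6.1]{LS}. Passing to $s=t=0$ and $s=t=1$ yields the corresponding statements for $\Kh^*(D)$ and $LC^*(D)$. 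The main obstacle I anticipate is the orientability check in the mixed case, where $\mc{C}_u\ne\mc{C}_v$: tracking how the dividing circle moves through the saddle and confirming that the induced orientations of the nearby trivial circles glue consistently across the band is the delicate part of the argument.
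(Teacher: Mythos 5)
Your plan matches the paper's in all essentials: take the unique essential circle $\mc{C}_v\subset D_v$ as the dividing circle for each resolution, argue that every saddle in the cube becomes orientable, and invoke Lemma~\ref{lem:signs for oriented saddle} to reduce the differential to an unsigned edge map times a single sign per edge, with $\del^2=0$ then forcing a sign assignment. Where your write-up is imprecise is in how the $\mc{C}_v$ get oriented. Fixing ``an orientation of the isotopy class'' of essential circles and transporting it is not a well-posed recipe: the space of oriented essential simple closed curves in $\RP^2$ is connected (one can isotope such a circle back to itself with reversed orientation), so the transported orientation depends on the isotopy used. You acknowledge the downstream consequence yourself---the ``delicate mixed case'' when $\mc{C}_u\neq\mc{C}_v$---but leave it unresolved. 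The paper handles both points together by \emph{propagating} rather than prescribing: it orients only the essential circle in the oriented resolution $D_o$, which via Definition~\ref{def:dividing circle} determines $o_{\mc{C}_o}$ on all of $D_o$ in such a way that every saddle out of $D_o$ is orientable, and then pushes orientations inductively through the cube so that at each step the next saddle is orientable by construction, not by a separate verification. Modulo the consistency of this propagation around $2$-faces (left implicit in the paper as well), the two arguments coincide; your concluding appeal to \cite[Proof of Proposition 6.1]{LS} is correct but not actually required by the statement, which only asks for \emph{some} sign assignment.
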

\begin{proof}
Again Theorem \ref{thm:  KC is well-defined} allows us to choose our orientations.  Since $L$ is class-1, Lemma \ref{lem:ess circ in Dv iff L is class-1} shows that every resolution $D_v$ must contain a (unique)  essential circle and we can choose these essential circles to be our dividing circles $\mc{C}$. We also choose an orientation for the essential circle in the oriented resolution of $D$. This determines orientations for all the circles in the oriented resolution, in such a way that all the saddles connecting it to  another resolution are oriented. From here we get induced orientations on each $D_v$ in such a way that all saddles are oriented, so that Lemma \ref{lem:signs for oriented saddle} completes the proof.
\end{proof}

\begin{remark}
The two theorems above correspond to the two cases where there are no 1-1 bifurcations (see also Remark \ref{rmk:No 1-1 bifurcations for class-1}).  If there are 1-1 bifurcations, then it is impossible for the cube to commute without signs except in some degenerate cases, and we cannot expect anything similar to the above theorems.
\end{remark}

\section{Lee generators and Lee homology} \label{sec:Lee}
The goal of this section is to prove Theorem \ref{thm:Lee} computing the Lee homology $LH^*(D)$ of any link diagram $D\subset\RP^2$. We will use arguments analogous to those of Lee in \cite{Lee}.  As such, we begin with a change of basis which diagonalizes the differential $\del_L$ and indicates the existence of an adjoint $\del_L^*$.  In order to assign Lee generators in $LC^*(D)$ which generate $LH^*(D)$ however, it will be necessary to utilize certain preferred sets of auxiliary diagram choices, and to keep track of the naturality of our generators with respect to these choices.

From now on we will assume that the base ring $R$ is $\Q$. (More generally, everything will still go through if we just assume that $2$ is invertible in $R$.)

\subsection{The Lee basis}

Let $D\subset\RP^2$ be any link diagram, and fix an arbitrary set of auxiliary diagram choices for $D$.  In Section \ref{sec:Defining deformed cx and Lee cx} we described the chain groups $C(D_v)$ involved in $LC^*(D)$ as tensor products of $V=\langle 1,X \rangle$ and $\ov=\langle \oone,\ox \rangle$.  As in \cite{Lee}, we define a new basis
\[ a:= 1+X,\quad b:= 1-X,\quad \oa:=\oone+\ox,\quad \ob:=\oone-\ox,\]
so that $V=\langle a,b \rangle$ and $\ov=\langle \oa,\ob \rangle$.

Unlike in $S^3$, here the differential on (tensor products of) such elements still requires some care based upon the local orientations of the circles near the saddle.  If the saddle is a 1-1 bifurcation, then the differential is zero and no further analysis is needed.  Otherwise, we present the following tables for $2\rightarrow 1$ bifurcations (multiplication $m$) and $1\rightarrow 2$ bifurcations (comultiplication $\Delta$) which utilize the notation $C$ and $I$ for consistently and inconsistently oriented circles near the saddle (see Rule (C) in Section \ref{sec:Defining deformed cx and Lee cx}).

\begin{center}
\setlength{\extrarowheight}{2pt}{
\begin{tabular}{||C||C|C|C|C||C||C|C||}
\hline
m: & a\otimes a & a\otimes b & b\otimes a & b\otimes b & \Delta: & a & b \\
\hline
C\otimes C \rightarrow C & 2a & 0 & 0 & 2b & C\rightarrow C\otimes C & a\otimes a & -b\otimes b\\
C\otimes C \rightarrow I & 2b & 0 & 0 & 2a & I\rightarrow C\otimes C & -b\otimes b & a\otimes a\\
C\otimes I \rightarrow C & 0 & 2a & 2b & 0 & C\rightarrow C\otimes I & a\otimes b & -b\otimes a\\
C\otimes I \rightarrow I & 0 & 2b & 2a & 0 & I\rightarrow C\otimes I & -b\otimes a & a\otimes b\\
\hline

\end{tabular}
}
\end{center}

We list only four of the eight possible rows because the other four rows can be determined by the global involution which interchanges the meaning of ``consistent" with ``inconsistent".  Just as in the proof of Lemma \ref{lem:change local ori gives chain iso}, by counting the total number of circles which can be labeled $X$ in both domain and co-domain, one can check that, for the multiplication $m$, this involution is in fact the identity (and thus e.g. the row for $I\otimes I \rightarrow I$ is precisely the same as for $C\otimes C \rightarrow C$), whereas for the comultiplication $\Delta$, this involution is multiplication by $-1$ (and thus e.g. the row for $I \rightarrow I\otimes I$ is the same as for $C \rightarrow C\otimes C$ but with all entries negated).

This table also continues to hold if there is an essential circle involved, with $\oa$ and $\ob$ replacing $a$ and $b$ for that circle in both domain and codomain.

From this diagonalization it is clear that an adjoint differential $\del_L^*$ can be defined by swapping the role of $m$ and $\Delta$ (and including a factor of $\pm 2$ as necessary).

\subsection{Lee generators}
For the case of links in $S^3$, each orientation of the given link diagram determines a Lee generator which generates a summand of Lee homology \cite{Lee}.  For links in $\RP^3$ however, the complex $LC^*(D)$ is only well-defined up to a set of auxiliary diagram choices (see Definition \ref{def:auxiliary diagram choices} and Theorem \ref{thm:  KC is well-defined}).  As such, our assignments of Lee generators will depend upon these choices, and we will restrict ourselves to certain `preferred' choices for one resolution in particular.

To begin, we let $D\subset\RP^2$ denote a link diagram with orientation $o$.  Throughout this section we continue to assume that all auxiliary diagram choices mentioned are using $o$ to determine the choice of local orientations at crossings.  We then let $D_o$ denote the oriented resolution of $D$ determined by $o$. (Note that, for the opposite orientation $\oo$, we have $D_{\oo}=D_o$.)  The orientations $o,\oo$ for $D$ induce orientations on $D_o=D_{\oo}$.  Our definition of a Lee generator will depend upon a choice of dividing circle $\mc{C}$ for $D_o$ which in turn induces a third orientation $o_\mc{C}$ on $D_o$.

\begin{definition}\label{def:Lee generators}
Let $(D,o)$ be an oriented link diagram in $\RP^2$.  Fix a choice of dividing circle $\mc{C}$ for $D_o=D_{\oo}$.  Let $LC^*_{\mc{C}}(D)$ denote the Lee complex for $D$ using any fixed set of auxiliary diagram choices which orients the circles in $D_o=D_{\oo}$ by $o_{\mc{C}}$ as described in Definition \ref{def:dividing circle}.  Then the \emph{Lee generators for $(D,o)$ with respect to $\mc{C}$} are two elements $\LeegenC_o,\LeegenC_{\oo}\in C(D_o)=C(D_{\oo})$ defined as follows.  For $\LeegenC_o$ (respectively $\LeegenC_{\oo}$), on each circle of $D_o$ we compare the orientation induced by $o$ (respectively $\oo$) with the orientation $o_{\mc{C}}$.  If these orientations match, we label the circle by $a$ (or by $\oa$ if we are labeling the essential circle).  Otherwise, we label the circle by $b$ (or by $\ob$).
\end{definition}

The Lee generators for a given oriented link diagram are natural, up to a sign, with respect to the choice of dividing circle (as well as all of the other auxiliary diagram choices inherent in constructing the Lee complex for a fixed diagram).

\begin{proposition}\label{prop:Lee gens under change of div circ}
Fix an oriented link diagram $(D,o)\subset\RP^2$.  Let $LC^*_{\mc{C}}(D)$ (respectively $LC^*_{\mc{C}'}(D)$) denote the Lee complex assigned to $D$ with some set of auxiliary diagram choices which assigns the orientation $o_\mc{C}$ (respectively $o_{\mc{C}'}$) to $D_o$.  Then the natural isomorphism
\[LC^*_\mc{C}(D) \xrightarrow{\phi} LC^*_{\mc{C}'}(D)\]
of Theorem \ref{thm:  KC is well-defined} is a filtered map of degree zero satisfying
\[\phi(\LeegenC_o) = \pm\Leegen^{\mc{C}'}_o,\quad\quad \phi(\LeegenC_{\oo}) = \pm\Leegen^{\mc{C}'}_{\oo}.\]
\end{proposition}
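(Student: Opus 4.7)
The plan is to verify two claims from formula \eqref{eq:natural iso formula}: that $\phi$ preserves the quantum filtration of degree zero, and that it sends each Lee generator to a signed copy of the corresponding generator on the other side. Both reduce to a direct per-circle analysis on the resolution $D_o$, since that is where the Lee generators live.

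First, I would check filtered degree zero. By \eqref{eq:natural iso formula}, $\phi$ sends each standard basis element $g$ (labeling each circle by one of $1, X, \oone, \ox$) to $\pm g$ with the labels preserved. Since the tri-grading $(i,j,k)$ of such a basis element depends only on the cube vertex and on the labels, and not on the chosen orientations of the circles, $\phi$ strictly preserves the tri-grading. In particular, it preserves the quantum filtration and has filtration degree zero.

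Next, I would analyze how $\phi$ acts on the Lee basis $\{a, b, \oa, \ob\}$ at the resolution $D_o$. For a circle whose orientation is unchanged between $o_\mc{C}$ and $o_{\mc{C}'}$, the factor $\epsilon_i$ in \eqref{eq:natural iso formula} equals $+1$, so $\phi$ fixes $1$ and $X$, hence also $a = 1+X$ and $b = 1-X$. For a circle whose orientation reverses, $\epsilon_i = -1$, giving $\phi(1) = 1$, $\phi(X) = -X$, and therefore $\phi(a) = b$ and $\phi(b) = a$. The same analysis with $\oone, \ox, \oa, \ob$ in place of $1, X, a, b$ handles an essential circle. By multilinearity, $\phi$ acts on a pure Lee-basis tensor in $C(D_o)$ by swapping $a \leftrightarrow b$ (and $\oa \leftrightarrow \ob$) on precisely those circles where $o_\mc{C}$ and $o_{\mc{C}'}$ disagree, up to the global sign $\sign(\sigma)$ from the permutation of circle orderings.

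Finally, I would compare $\LeegenC_o$ and $\Leegen^{\mc{C}'}_o$ circle-by-circle. By Definition \ref{def:Lee generators}, both label a given circle of $D_o$ by $a$ (or $\oa$) precisely when the orientation induced by $o$ on that circle agrees with the dividing-circle orientation. Hence the two generators differ in their labels on exactly those circles where $o_\mc{C}$ and $o_{\mc{C}'}$ disagree, which is exactly where $\phi$ performs the swap. This immediately yields $\phi(\LeegenC_o) = \pm \Leegen^{\mc{C}'}_o$, and replacing $o$ by $\oo$ throughout gives $\phi(\LeegenC_{\oo}) = \pm \Leegen^{\mc{C}'}_{\oo}$. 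I expect the only (minor) bookkeeping hurdle to be verifying that the essential circle is treated on equal footing with trivial circles in both the Lee basis and in \eqref{eq:natural iso formula}; this is immediate from the uniform structure of the construction, so no substantive obstacle remains.
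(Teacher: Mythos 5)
Your proof is correct and follows essentially the same route as the paper: both note that $\phi$ acts by $\pm 1$ on standard basis elements (hence has filtration degree zero), that on a circle whose orientation flips $\phi$ negates $X$ and hence swaps $a \leftrightarrow b$, and that the Lee generators $\LeegenC_o$ and $\Leegen^{\mc{C}'}_o$ differ by exactly this swap on exactly those circles. Your per-circle bookkeeping is a slightly more explicit version of the paper's argument, but there is no substantive difference in approach.
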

\begin{proof}
The map is clearly filtered of degree zero since it only affects the sign of certain generators.

For a change in local orientations of crossings, $\phi$ is $\pm I$ on all resolutions (including on $C(D_o)$).  Similarly, for permutations of orderings of circles on $D_o$ (but $o_\mc{C}=o_{\mc{C}'}$), $\phi$ is $\pm I$ on $C(D_o)$ depending on the sign of the permutation.  Any change of circle orderings or orientations on resolutions other than $D_o$ causes $\phi$ to be $I$ on $C(D_o)$.

The interesting case is when $\mc{C}$ and $\mc{C}'$ are different enough to induce different orientations $o_\mc{C}\neq o_{\mc{C}'}$ on $D_o$.  In this case, the map $\phi\vert_{C(D_o)}$ maintains circles labeled $1$, but negates any circle labeled $X$ which changes orientation.  This is equivalent to interchanging $a$ with $b$ on any circle which changes orientation.  Since the original oriented diagram $(D,o)$ is unchanged, changing the orientation of a circle is the same as changing its comparison with $o$ (similarly with $\oo$), implying that $\phi$ sends $\LeegenC_o$ to $\Leegen^{\mc{C}'}_o$ (similarly with $\oo$) as desired.
\end{proof}

We postpone the naturality of the Lee generators with respect to the choice of diagram (i.e., invariance under Reidemeister moves as in Theorem \ref{thm:Reidemeister moves give chain homotopy equivalences}) until Section \ref{sec:Reid moves}.  In the meanwhile, following \cite{Lee}, we compute the Lee homology for any link diagram $D\subset\RP^2$.

\begin{proposition}\label{prop:Lee gens survive in homology}
Fix an oriented link diagram $(D,o)$ in $\RP^2$ and fix a choice of dividing circle $\mc{C}$ for resolution $D_o$.  Then for any set of auxiliary diagram choices which orients the circles in $D_o=D_{\oo}$ by $o_{\mc{C}}$, the Lee generators $\LeegenC_o,\LeegenC_{\oo}\in LC^*_{\mc{C}}(D)$ are both in $\ker(\del)\cap\ker(\del^*)$, and thus give distinct non-zero elements of $LH^*_{\mc{C}}(D)$.
\end{proposition}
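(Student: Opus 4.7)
The plan is to adapt Lee's argument from \cite{Lee} (and its annular version in \cite{GLW}) to our setting, verifying by direct computation that $\LeegenC_o$ (and symmetrically $\LeegenC_{\bar o}$) lies in $\ker(\del)\cap\ker(\del^*)$ in the diagonalized $(a,b,\bar a,\bar b)$ basis. Since $\LeegenC_o$ is supported entirely at the vertex $v_o$ of $\cube^n$ corresponding to the oriented resolution $D_o$, it suffices to show that $\del^e(\LeegenC_o)=0$ for every edge $e$ of $\cube^n$ incident to $v_o$. Each such edge corresponds to flipping the resolution at a single crossing $c$ from its oriented to its non-oriented choice, so the induced saddle at $c$ is non-orientable with respect to the link orientation $o$. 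I will split into cases according to the bifurcation type at $c$.

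In the merge case (2-1 bifurcation) the two circles $C_1, C_2$ of $D_o$ meeting at $c$ are distinct. A checkerboard analysis of the three local regions at $c$ shows that $o_\mc{C}$, being induced by a dividing circle and therefore alternating across arcs, orients the two local arcs of $C_1$ and $C_2$ in opposite directions at $c$---making the saddle orientable with respect to $o_\mc{C}$. Combined with non-orientability with respect to $o$, this forces $o$ and $o_\mc{C}$ to disagree on exactly one of $C_1, C_2$, so these two circles receive opposite Lee labels (one labelled $a$ and the other $b$, or their barred analogues when the essential circle is involved). The same analysis shows that $C_1$ and $C_2$ share the same consistency type, so the corresponding row of the multiplication table (in either the $C\otimes C$ or $I\otimes I$ configuration) sends $a\otimes b$ and $b\otimes a$ to zero. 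Hence $\del^e(\LeegenC_o)=0$. The 1-1 bifurcation case is immediate, since $\del^e\equiv 0$ by definition there.

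The split case (1-2 bifurcation) is the main obstacle I expect, since then a single circle of $D_o$ carries both local arcs at $c$ and $o_\mc{C}$ does not make the saddle orientable. Here I would extend $\mc{C}$ to serve as a dividing circle for $D_v$ as well (permissible since $\mc{C}$ is disjoint from the local saddle region), inducing $o_\mc{C}$ on the two output circles, and then carefully expand the relevant entry of $\Delta$ on the Lee labels together with the three sign rules (P), (O), (C); these combine to produce the required cancellation. The adjoint statement $\del^*(\LeegenC_o)=0$ follows by an entirely parallel argument, with edges out of $v_o$ via $\del^*$ corresponding to flipping negative rather than positive crossings, and with the roles of $m$ and $\Delta$ in the tables swapped; the same case analysis yields vanishing. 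The argument for $\LeegenC_{\bar o}$ is identical. Finally, since $\LeegenC_o$ is a non-zero pure tensor lying in $\ker(\del)\cap\ker(\del^*)$, it is a harmonic element and therefore represents a non-zero class in $\LHC(D)$.
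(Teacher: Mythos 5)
Your proposal has two genuine gaps, both traceable to the same missing observation.

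The main simplification in the paper's proof --- which you did not spot --- is that the split (1-2 bifurcation) case simply cannot arise here. At every crossing $c$, the two local arcs of $D_o$ are, by definition of the oriented resolution, oriented parallel with respect to $o$. If both arcs lay on a single circle $C$, then reversing the orientation of $C$ would reverse both arcs simultaneously, so they stay parallel under either orientation of $C$; but a $1\to 2$ split cobordism (a pair of pants) always admits compatible boundary orientations with antiparallel input arcs at the saddle. Hence a saddle on a single circle of $D_o$ must be a 1-1 bifurcation, which contributes zero. The case you flag as ``the main obstacle I expect'' and defer with a promise of ``required cancellation'' therefore never occurs, for either $\del$ or $\del^*$; both are supported entirely on merges and 1-1 bifurcations out of $D_o$.

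The second gap is in your merge analysis. You assert that a checkerboard argument shows $o_{\mc{C}}$ always makes the saddle orientable, forcing the two merging circles to carry opposite Lee labels ($a$ and $b$) and the same consistency type. This fails in $\RP^2$: when the saddle arc crosses the dividing circle $\mc{C}$ an odd number of times (which can occur when $\mc{C}$ is forced to be the essential circle of $D_o$), the alternating orientation $o_{\mc{C}}$ leaves the two local arcs parallel, so the saddle is \emph{not} $o_{\mc{C}}$-orientable. In that case $o$ agrees with $o_{\mc{C}}$ on both or on neither of the two merging circles, the Lee labels become $a\otimes a$ or $b\otimes b$, and exactly one of the two circles is consistent. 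The product still vanishes, but via the $C\otimes I$ (or $I\otimes C$) row of the multiplication table rather than the $C\otimes C$ row you invoke. Your argument treats only the row where the labels differ and the consistencies agree, so it is incomplete even in the merge case. The paper handles both subcases explicitly, and either is possible depending on the position of $\mc{C}$ relative to the crossing.
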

\begin{proof}
Let us decorate the resolution $D_o$ with $n$ arcs indicating each of the saddles that can change the resolution at each of the $n$ crossings in $D$.  Let $D_o'$ denote the resolution diagram $D_o$ together with these arcs.

We focus on a single arc in $D_o'$, indicating a saddle $s$ that corresponds to a map $\del_s$ which contributes to either $\del$ if the original crossing in $D$ was positive (i.e. we took a 0-resolution here to arrive at $D_o$), or to $\del^*$ if the original crossing in $D$ was negative (i.e. we took a 1-resolution here to arrive at $D_o$).  In either case, we let $D_v$ denote the resolution diagram arrived at by performing the saddle cobordism, so that we have $C(D_o)\xrightarrow{\del_s}C(D_v)$.  We will use the choice of orientation $o_{\mc C}$ on $D_o$ (for the purposes of sign rules (O) and (C)) to show that $\del_s(\LeegenC_o)=\del_s(\LeegenC_{\oo})=0$ regardless of $s$ and $D_v$.

To begin, we note that the orientation $o$ on $D_o$ prevents any saddle $s$ from inducing a $1\rightarrow 2$ bifurcation.  If a saddle $s$ induces a $1\rightarrow 1$ bifurcation, then $\del_s$ is the zero map and there is nothing to prove.  Therefore we may assume for the remainder of the proof that $\del_s=m$, a multiplication map involving two circles merging into one.  From here there are two cases to consider.

Suppose that the original orientation $o$ matches $o_{\mc{C}}$ on precisely one of the two merging circles in $D_o$; up to interchanging the role of $o$ and $\oo$, we may assume that the orientations match on the eastern circle, where north is defined using the orientation $o$ of the crossing $c$ as in the rules (P) and (C) for determining signs of the differential.  Then $\del_s(\LeegenC_o)$ is computed by multiplying $b\otimes a$ with both circles consistent, while $\del_s(\LeegenC_{\oo})$ is computed by multiplying $a\otimes b$ with both circles inconsistent.  In such a case, the multiplication table for Lee generators shows $\del_s(\LeegenC_o)=\del_s(\LeegenC_{\oo})=0$ regardless of the orientation of $D_v$, as desired.

Now suppose that the original orientation $o$ matches $o_{\mc{C}}$ on an even number of the two merging circles in $D_o$; up to interchanging the role of $o$ and $\oo$, we may assume that $o$ and $o_{\mc{C}}$ match on both circles.  In this case $\del_s(\LeegenC_o)$ is computed by multiplying $a\otimes a$ with only the eastern circle consistent, while $\del_s(\LeegenC_{\oo})$ is computed by multiplying $b\otimes b$ with only the western circle consistent.  Once again the multiplication table for Lee generators shows $\del_s(\LeegenC_o)=\del_s(\LeegenC_{\oo})=0$ regardless of the orientation of $D_v$, as desired.
\end{proof}

\begin{theorem}\label{thm:Lee homology of diagram}
Let $D\subset\RP^2$ be a fixed link diagram for some link $L\subset\RP^3$ with $\ell$ components.  Let $O(L)$ denote the set of orientations of $L$.  For each pair $o,\oo \in O(L),$ choose a dividing circle $\mc{C}_o=\mc{C}_{\oo}$ for the oriented resolution $D_o=D_{\oo}$.  Then for any set of auxiliary diagram choices which assigns the orientation $o_{\mc{C}_o}$ on each $D_o$, the Lee homology will satisfy
\begin{equation}\label{eq:Lee homology of a diagram}
	\LH^*(D)\cong \Q^{O(L)},
\end{equation}
with each summand being generated by the corresponding Lee generator $\LeegenCi[o]_o$.
\end{theorem}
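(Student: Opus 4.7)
The plan is to build on Proposition~\ref{prop:Lee gens survive in homology}, which already supplies $|O(L)|=2^\ell$ homology classes $[\LeegenCi[o]_o]\in \LH^*(D)$. What remains is to establish their linear independence in homology and the matching upper bound $\dim_\Q \LH^*(D)\leq 2^\ell$.

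For linear independence, I will first verify it at the chain level. Two orientations $o,o'$ determine the same oriented resolution only when $o'\in\{o,\bar o\}$, so for all other pairs the generators $\LeegenCi[o]_o$ and $\LeegenCi[o']_{o'}$ lie in different summands $C(D_v)$ of $C(D)$. In the residual case $o'=\bar o$, the resolutions coincide but the $\{a,b\}$- and $\{\oa,\ob\}$-labels are opposite on every circle of $D_o$, so re-expanding into the $\{1,X,\oone,\ox\}$ basis preserves independence. To promote this to homological independence, I will use the adjoint $\del^*$ introduced in Section~\ref{sec:Lee} by swapping $m$ and $\Delta$ (with the appropriate $\pm 2$ factors) in the $\{a,b,\oa,\ob\}$-tables. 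Since Proposition~\ref{prop:Lee gens survive in homology} places every $\LeegenCi[o]_o$ in $\ker\del\cap\ker\del^*$, a standard pairing/Hodge argument then forbids any nontrivial relation $\sum c_o \LeegenCi[o]_o = \del\beta$.

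For the dimension bound, I plan to follow Lee's strategy from~\cite{Lee}, adapted along the lines of the annular treatment in~\cite{GLW}. After the change of basis, each cube-edge map takes the diagonal-like shape recorded in the tables at the start of Section~\ref{sec:Lee}: it rescales a generator by $\pm 2$ when the two circles involved carry matching $\{a,b\}$-type labels, and kills it otherwise. Iteratively canceling the rescaling edges via Gaussian elimination leaves a chain-homotopy-equivalent complex supported on the ``stable'' labelings, and a combinatorial check --- propagating outward from the oriented resolution via orientable saddles, using Lemma~\ref{lem:signs for oriented saddle} --- should identify these stable labelings with the orientation labelings from Definition~\ref{def:Lee generators}, giving the count $2^\ell$.

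The main obstacle I anticipate is managing the $\RP^3$-specific features in the dimension count: essential circles carry the $\{\oa,\ob\}$-basis with slightly modified signs, while 1-1 bifurcations contribute zero edge maps that could a priori interfere with Gaussian elimination. Fortunately, Remark~\ref{rmk:No 1-1 bifurcations for class-1} confines 1-1 bifurcations to the non-local class-0 case (in which no essential circles ever appear), while Lemma~\ref{lem:ess circ in Dv iff L is class-1} guarantees a unique essential circle in every resolution of a class-1 diagram. The two regimes can therefore be treated in parallel, and verifying in each that the stable labelings are in bijection with $O(L)$ is the crux of the argument.
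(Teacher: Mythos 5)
Your approach is genuinely different from the paper's. The paper follows Lee's original strategy: an induction on the number of crossings of $D$, using the long exact sequence on Lee homology coming from the short exact sequence of chain complexes obtained by resolving one crossing, with the unknot as base case; the inductive step splits into cases according to whether $L$ is a knot, a split link, or a non-split link, and one tracks the classes $[\LeegenCi[o]_o]$ through the connecting maps. You instead propose a non-inductive route: a Hodge-type pairing argument for linear independence of the $|O(L)|$ classes, paired with a Gaussian-elimination argument for the matching dimension bound. The Hodge part is essentially the same reasoning the paper already invokes in Proposition~\ref{prop:Lee gens survive in homology} to conclude ``thus give non-zero elements,'' extended linearly; it is sound once one checks that conjugating by the diagonal sign operator $g\mapsto(-1)^{\#\{b\text{-labels in }g\}}g$ turns the paper's formal adjoint $\del^*$ into the honest transpose of $\del$ with respect to a positive-definite inner product.

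There is a small error in the linear-independence step: it is not true that $D_o=D_{o'}$ forces $o'\in\{o,\oo\}$. For a split diagram $D=D'\sqcup D''$, reversing only the orientation of $D'$ changes no crossing signs and hence leaves the oriented resolution unchanged. What saves the argument is the point you make next: even when $D_o=D_{o'}$, the $\{a,b\}$- and $\{\oa,\ob\}$-labelings carried by $\LeegenCi[o]_o$ and $\LeegenCi[o']_{o'}$ differ on some circle, so they are distinct basis vectors of $C(D_o)$ and hence independent. The step should be phrased in those terms rather than via distinctness of resolutions.

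The more substantial gap is the dimension bound, which you correctly identify as ``the crux'' but do not carry out. In the $\{a,b\}$-basis, the cycles of the Lee complex are not confined to the oriented resolution: any labeling of any $D_v$ for which every outgoing non-trivial edge is a merge with mismatched $\{a,b\}$-labels is a cycle, so one must additionally argue that all such cycles away from $D_o$ are boundaries. Moreover, cancelling an edge with a $\pm 2$ coefficient via Gaussian elimination produces new composite ``zig-zag'' differentials among the surviving generators, so the assertion that the reduced complex is ``supported on stable labelings'' with vanishing differential is precisely what needs a proof, not an automatic consequence of the elimination. Lemma~\ref{lem:signs for oriented saddle} controls signs along orientable saddles but does not, by itself, control the combinatorics of the reduced complex. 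This is the difficulty that the inductive long-exact-sequence argument in the paper is designed to sidestep, by reducing to diagrams with fewer crossings where the answer is already known.
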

\begin{proof}
This is entirely similar to Lee's proof of Theorem 4.2 in \cite{Lee}.  The statement is proved for knots and 2-component links first, using an inductive argument on the number of crossings of the diagram $D$, and starting with the unknot as the base case.  For the inductive step, we consider the long exact sequence of Proposition \ref{prop:skein exact seq} (letting the auxiliary choices for $D$ determine the choices for $D_0$ and $D_1$) in the case of $s=t=1$ relating $\LH^*(D)$ to the Lee homology of the two resolutions $D_0$ and $D_1$ at a crossing.  %(This exact sequence comes from the obvious short exact sequence on chain groups.)
There are now three cases to consider depending on whether $L$ was a knot, or a split link, or a non-split link.

If $L$ was a knot, then one of $D_0$ and $D_1$ is also the diagram for a knot, while the other is for a 2-component link.  If $L$ was a non-split link, then the crossing in $D$ can be chosen so that $D_0$ and $D_1$ are both knots.  In either case, the inductive assumption applies to $\LH(D_0)$ and $\LH(D_1)$, and then one keeps track of what happens to the elements $\LeegenCi[o]_o$ under the exact sequence to arrive at the desired conclusion.  Meanwhile, if $L$ was a split link, we could choose a split diagram $D=D'\sqcup D''$ so that $\LH(D)\cong\LH(D')\otimes\LH(D'')$ and the result follows quickly.

From here, one provides a further induction on the number of components $\ell\geq 2$ using essentially the same arguments for split and non-split $L$.
\end{proof}

\section{The $s$-invariant for oriented links in $\RP^3$}
\label{sec:s-invt defined}

\subsection{The $s$-invariant of an oriented link diagram $D\subset\RP^2$}

Let $(D,o)\subset\RP^2$ be an oriented link diagram for a link $L\subset\RP^3$.  Theorem \ref{thm:Lee homology of diagram} states that, for any choice of dividing circle $\mc{C}$ for the oriented resolution $D_o$, the Lee generators $\LeegenC_o,\LeegenC_{\oo}$ generate two summands in the Lee homology $\LH^*(D)$.  As seen in Section \ref{sec:Defining deformed cx and Lee cx}, the differentials in the Lee complex change the quantum grading $j$ by either $0$ or $4$. Thus, the Lee complex is filtered with respect to $j$, and this filtration descends to the Lee homology.\footnote{The filtered subcomplexes here are given by $j \geq j_0$, for different values $j_0$. This convention is somewhat nonstandard (one usually considers $j \leq j_0$), but it is the same as the one in \cite{Rasmussen}.}  We will use the notation $\q(z)$ to denote the quantum filtration level of an element $z\in\LH^*(D)$.

\begin{definition}\label{def:s invariant for a diagram}
The \emph{Rasmussen $s$-invariant} of the oriented link diagram $(D,o)\subset\RP^2$ is defined to be
\[s(D):= \frac{\q\left(\left[\LeegenC_o + \LeegenC_{\oo}\right]\right) + \q\left(\left[\LeegenC_o - \LeegenC_{\oo}\right]\right)}{2},\]
where $\mc{C}$ is any choice of dividing circle for the oriented resolution $D_o$.
\end{definition}

We will see in Corollary \ref{cor:smin and smax} that, as is the case for links in $S^3$, the annulus, and $S^1\times S^2$, the two filtration levels $\q\left(\left[ \LeegenC_o + \LeegenC_{\oo} \right]\right)$ and $\q\left(\left[ \LeegenC_o - \LeegenC_{\oo} \right]\right)$ differ by two, and so we can let
\[\smin(D) := \q\left(\left[ \LeegenC_o \right]\right)\]
denote the lesser of these two quantities, so that $s(D)=\smin(D)+1$.

Proposition \ref{prop:Lee gens under change of div circ} shows that $s$ is well-defined for link diagrams (i.e. does not depend on the choice of dividing circle $\mc{C}$).  In order to show that $s$ is well-defined for links rather than link diagrams, we will need to analyze the behavior of Lee generators, and their filtration levels, under Reidemeister moves.

\subsection{Lee generators and filtration levels under Reidemeister moves}
\label{sec:Reid moves}
In this section we analyze the effect of Reidemeister moves on Lee homology.  Given such a Reidemeister move $R$ between two link diagrams, Theorem \ref{thm:Reidemeister moves give chain homotopy equivalences} provides a filtration-preserving chain homotopy equivalence $\phi_R$ on Lee complexes.  We wish to show that this map $\phi_R$ also preserves Lee generators in the proper sense.  The proof is diagrammatic and similar in spirit to the proofs in \cite{Rasmussen,BW}.  However, the complexes here depend not just upon the diagrams used for the links, but also on sets of auxiliary diagram choices for these diagrams.  As such, we will need to keep track of these choices when analyzing $\phi_R$.

Given an oriented link diagram $(D,o)\subset\RP^2$ and a choice of dividing circle $\mc{C}$ for the oriented resolution $D_o$, we will continue to use the notation $\LCC(D)$ for the Lee complex of $D$ using a set of auxiliary diagram choices which uses $o$ to assign local orientations to crossings and which orients $D_o$ using $o_{\mc{C}}$, together with the notation $\LeegenC_o,\LeegenC_{\oo}\in \LCC(D)$ for the corresponding Lee generators of Definition \ref{def:Lee generators}.

\begin{lemma}[Reidemeister 1]\label{lem:Reid 1 on Lee gens}
Let $D_1,D_2\subset \RP^2$ be two oriented link diagrams related by a single Reidemeister 1 move (either left- or right-handed).  Then there exists a choice of dividing circles $\mc{C}_i$ for $D_{i,o}$ such that the chain homotopy equivalence of Theorem \ref{thm:Reidemeister moves give chain homotopy equivalences} descends to a map on homology
\[\LHCi[1](D_1) \xrightarrow{\phi_R} \LHCi[2](D_2)\]
which satisfies
\[\phi_R([\LeegenCi[1]_o]) = \lambda [\LeegenCi[2]_o], \]
where $\lambda$ is some invertible scalar in $\Q$ (and likewise for the opposite orientation $\oo$).
\end{lemma}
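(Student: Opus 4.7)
The plan is to construct a compatible pair of dividing circles for $D_{1,o}$ and $D_{2,o}$, and then trace the Lee generators through the Reidemeister 1 chain homotopy equivalence using the explicit ``acyclic subcomplex plus quotient'' decomposition that is familiar from Rasmussen's original argument \cite{Rasmussen} and its generalizations (e.g.\ \cite{BW,GLW}). Since Reidemeister 1 is local, it should suffice to check that a single small circle appearing in one of the two resolutions of the R1 crossing contributes correctly.

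First I would pick an arbitrary dividing circle $\mc{C}_1$ for $D_{1,o}$ and then construct $\mc{C}_2$ for $D_{2,o}$ by taking a curve in $\RP^2$ that agrees with $\mc{C}_1$ outside a small disk containing the R1 kink, and arranging it (by isotopy in $\RP^2$) to be disjoint from the kink circle that appears in one of the two resolutions. This ensures that the orientations $o_{\mc{C}_1}$ on $D_{1,o}$ and $o_{\mc{C}_2}$ on $D_{2,o}$ agree on all circles the two diagrams share, so that outside the kink neighborhood the labels in $\LeegenCi[1]_o$ and $\LeegenCi[2]_o$ match.

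Next I would recall, from the proof of Theorem \ref{thm:Reidemeister moves give chain homotopy equivalences} (and the $S^3$ model for R1), that $\LCCi[2](D_2)$ splits as an acyclic subcomplex plus a quotient chain-isomorphic to $\LCCi[1](D_1)$: one label ($1$ or $X$) on the small kink circle generates the acyclic piece, and the complementary label descends to the quotient. The chain homotopy equivalence $\phi_R:\LCCi[1](D_1)\to\LCCi[2](D_2)$ then factors through the inclusion of this quotient, corrected by a homotopy that kills the acyclic part. Passing to the $\{a,b\}$ (respectively $\{\oa,\ob\}$) basis, the acyclic/quotient splitting is again given by fixing one of the two basis vectors on the kink circle.

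The heart of the argument is to examine $\LeegenCi[2]_o$ in the resolution of $D_{2,o}$ containing the kink circle. That circle carries an orientation coming from $o$, and comparing it with $o_{\mc{C}_2}$ produces a definite label $a$ or $b$ (or $\oa/\ob$ if the kink circle happens to be essential, which occurs only in degenerate class-1 situations). Crucially, this label is forced by the orientation conventions to be the one generating the quotient, not the acyclic subcomplex; the remaining tensor factors of $\LeegenCi[2]_o$ are exactly $\LeegenCi[1]_o$ by the matching in the first step. Therefore $\phi_R([\LeegenCi[1]_o])=\lambda[\LeegenCi[2]_o]$ with $\lambda\in\Q^\times$, the scalar collecting the signs coming from rules (P), (O), (C) together with any normalization in the definition of $\phi_R$. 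The case of $\oo$ is identical, since reversing orientations swaps $a\leftrightarrow b$ and $\oa\leftrightarrow\ob$ simultaneously on every circle.

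The main obstacle I expect is the last check: verifying in each flavor of R1 move (left- vs.\ right-handed kink, positive vs.\ negative crossing, and whether the kink circle lies in the $0$- or the $1$-resolution) that the label forced on the kink circle by $o$ and $o_{\mc{C}_2}$ is indeed the one generating the quotient. This is a finite case analysis; in each case the oriented resolution of a kink produces a uniquely oriented small circle and the comparison with $o_{\mc{C}_2}$ is unambiguous, so the label always lands on the quotient side and $\lambda$ is nonzero. Everything else is essentially bookkeeping of signs.
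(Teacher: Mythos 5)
Your proposal follows the paper's approach: both reduce the statement to Rasmussen's $S^3$ check for Reidemeister 1 after arranging compatible dividing circles for $D_{1,o}$ and $D_{2,o}$. Two small points worth noting. First, your isotopy step to make $\mc{C}_2$ avoid the kink circle is unnecessary: a dividing circle $\mc{C}_1$ for $D_{1,o}$ is already disjoint from the kink circle (which is a component of $D_{1,o}$), and since $D_{2,o}$ is obtained from $D_{1,o}$ by deleting that disjoint local circle, $\mc{C}_1$ is \emph{already} a dividing circle for $D_{2,o}$; the paper simply sets $\mc{C}_1 = \mc{C}_2$, handling separately whether the ambient circle $\Gamma$ of the R1 strand in the oriented resolution is local or essential (in the latter case $\mc{C}_i$ is forced to be $\Gamma$ itself). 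Second, your parenthetical ``unless the kink circle happens to be essential'' is vacuous: the small circle created by resolving an R1 crossing always bounds a disk in a neighborhood of the crossing, so it is never homologically essential; the genuine $\RP^2$-specific case the paper must track is whether the other strand (your $\Gamma$) is essential, not the kink circle. With those adjustments, your fleshed-out account of the acyclic/quotient decomposition in the $\{a,b\}$ basis matches what the paper delegates to Rasmussen.
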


\begin{proof}
We present the following argument with an eye towards generalization to the Reidemeister 2 and 3 moves to come, although in the current setting of the Reidemeister 1 move, various simplifications are possible.  We will focus on the right-handed Reidemeister 1 move; the left-handed analysis is similar.

The diagrams $D_1,D_2$ are identical outside of a small local picture; without loss of generality we may assume the two local pictures are
\[
D_1 = \ILtikzpic[xscale=.2,yscale=.2]{
				\draw[thick,->] (2.5,0) to[out=-90, in=0] (2,-1) to[out=180, in=-90] (0,2)--(0,3.5);
				\drawover[thick] { (0,-3.5)--(0,-2) to[out=90, in=180] (2,1) to[out=0, in=90] (2.5,0); }
				\draw[dashed] (0,0) circle (3.5);
			}
\quad , \quad
D_2 = \ILtikzpic[xscale=.2,yscale=.2]{
				\draw[thick,->] (0,-3.5)--(0,3.5) ;
				\draw[dashed] (0,0) circle (3.5);
			}\quad ,
\]
with oriented resolutions also identical outside of the local pictures
\[
D_{1,o} = \ILtikzpic[xscale=.2,yscale=.2]{
					\draw[thick, ->] (0,-3.5)--(0,3.5);
					\draw[thick] (1.8,0) circle (1);
					\draw[thick,->] (.8,.1)--(.8,.2);
					\draw[dashed] (0,0) circle (3.5);
					}
\quad , \quad
D_{2,o} =  \ILtikzpic[xscale=.2,yscale=.2]{
					\draw[thick,->] (0,-3.5)--(0,3.5) ;
					\draw[dashed] (0,0) circle (3.5);
					}\quad .
\]
Note that we have drawn the inherited orientations, although these orientations are \emph{not} the ones we use in our set of auxiliary diagram choices (recall that, after choosing a dividing circle $\mc{C}_i$, we use the alternating orientation $o_{\mc{C}_i}$ to make those choices).  Indeed these inherited orientations will play no role in the Reidemeister 1 setting, but will play a role when analyzing Reidemeister 2 and 3.

We now let $\Gamma$ denote the connected component within $D_{2,o}$ of the single strand present in the local picture above; thus $\Gamma$ is a circle in $\RP^2$.  There are now two cases to consider.  If the circle $\Gamma$ is local (contained in a disk in $\RP^2$), then the dividing circle $\mc{C}_2$ can be chosen entirely disjoint from this local picture.  In this case, since $D_{1,o}$ is identical to $D_{2,o}$ away from this local picture, we can choose $\mc{C}_1=\mc{C}_2$ disjointly as well.

On the other hand, if the circle $\Gamma$ is homologically essential, it must be chosen as $\mc{C}_2$.  Once again, since $D_{1,o}$ is identical to $D_{2,o}$ outside of this local picture, this same circle must be essential in $D_{1,o}$ and we can again choose $\mc{C}_1=\mc{C}_2$.

In both cases, since $\mc{C}_1=\mc{C}_2$, we are able to set all of the resolution choices for $D_1$ and $D_2$ equivalently (when ordering circles, the extra disjoint circle in various resolutions of $D_1$ can always be ordered last).  We also see that the local saddle connecting the two circles shown in $D_{1,0}$ cannot be a 1-1 bifurcation.  Thus the map $$\LCCi[1](D_1) \xrightarrow{\phi_R} \LCCi[2](D_2)$$ for the Reidemeister move, induced from \eqref{eq:phir}, behaves in precisely the same manner as it did for Lee complexes in $S^3$, and Rasmussen's checks in \cite{Rasmussen} apply virtually unchanged.
\end{proof}

\begin{lemma}[Reidemeister 2]\label{lem:Reid 2 on Lee gens}
	Let $D_1,D_2\subset \RP^2$ be two oriented link diagrams related by a single Reidemeister 2 move.  Then there exists a choice of dividing circles $\mc{C}_i$ for $D_{i,o}$ such that the chain homotopy equivalence of Theorem \ref{thm:Reidemeister moves give chain homotopy equivalences} descends to a map on homology
	\[\LHCi[1](D_1) \xrightarrow{\phi_R} \LHCi[2](D_2)\]
	which satisfies
	\[\phi_R([\LeegenCi[1]_o]) = \lambda [\LeegenCi[2]_o], \]
	where $\lambda$ is some invertible scalar in $\Q$ (and likewise for the opposite orientation $\oo$).
\end{lemma}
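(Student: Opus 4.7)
The plan is to mirror the proof of Lemma~\ref{lem:Reid 1 on Lee gens}. First, since an R2 move involves one positive and one negative crossing in $D_1$, the oriented resolution $D_{1,o}$ takes the $0$-smoothing at the positive crossing and the $1$-smoothing at the negative crossing, producing two parallel arcs locally. Hence $D_{1,o}$ and $D_{2,o}$ coincide as oriented resolution diagrams. Consequently we can choose $\mc{C}_1 = \mc{C}_2 =: \mc{C}$: if the R2 tangle lies in a disk disjoint from any homologically essential loop of the resolution, pick $\mc{C}$ disjoint from the local picture; if an essential loop passes through, choose that loop as $\mc{C}$. With this common choice, the ordering and orientation data of Definition~\ref{def:resolution choices} can be aligned for $D_1$ and $D_2$ on common components, and in particular the alternating orientation $o_{\mc{C}}$ on the oriented resolution is the same on both sides.

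Next, I would invoke the Gaussian elimination procedure used in Rasmussen's R2 argument \cite{Rasmussen}. The local two-crossing cube for $D_1$ has four vertices; two of them contain a disjoint bigon circle supporting an acyclic subcomplex. All local saddles here are $1\to 2$ or $2\to 1$ bifurcations (no $1\to 1$ bifurcations can occur, since the R2 tangle sits inside a disk), so by Lemma~\ref{lem:signs for oriented saddle} the signs contributed by rules (O) and (C) factor edgewise and the local cube has the same shape as in $S^3$, up to a global sign assignment. Cancelling the acyclic summand yields the filtered chain homotopy equivalence $\phi_R\colon\LCCi[1](D_1)\to\LCCi[2](D_2)$, which restricts on the oriented-resolution summand to multiplication by an invertible scalar.

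Finally, because $\LeegenCi[1]_o$ is supported on $C(D_{1,o})$ with $\{a,b,\oa,\ob\}$-labels determined by comparing $o$ to $o_{\mc{C}}$, and $D_{1,o}=D_{2,o}$ with the same $o_{\mc{C}}$, the generator $\LeegenCi[1]_o$ maps under $\phi_R$ to $\lambda\,\LeegenCi[2]_o$ for some $\lambda\in\Q^\times$; the argument for $\oo$ is identical. The main obstacle is the sign bookkeeping under rules (O) and (C) when $\mc{C}$ meets the R2 disk: one must run through the short finite list of ways $\mc{C}$ can traverse the two strands and verify in each case that the local edge maps in $\LCCi[1](D_1)$ agree, up to a uniform sign, with those used in the $S^3$ cancellation, so that the scalar $\lambda$ is read off consistently from the local configuration.
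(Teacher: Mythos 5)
Your proof misses the case split that the paper's argument hinges on. You write that the oriented resolution $D_{1,o}$ is obtained by taking the $0$-smoothing at the positive crossing and the $1$-smoothing at the negative crossing, ``producing two parallel arcs locally,'' and you conclude $D_{1,o}=D_{2,o}$. This is only true when the two strands in the R2 local picture are oriented in the \emph{same} direction. When they are oppositely oriented, the oriented resolutions at the two crossings are the ``horizontal'' smoothings, so $D_{1,o}$ acquires an extra bigon circle and does \emph{not} coincide with $D_{2,o}$. That opposite-orientation case is precisely where the extra work occurs: one cannot automatically take $\mc{C}_1=\mc{C}_2$, and the paper instead analyzes the trivalent graph $\Gamma$ obtained from $D_{2,o}$ together with the surgery arc, splitting further according to whether $\Gamma$ carries an essential circle. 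None of this appears in your proposal.

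Your second claim, that ``no $1\to 1$ bifurcations can occur, since the R2 tangle sits inside a disk,'' is also not sound. A $1\to 1$ bifurcation is a global phenomenon: the saddle arc is local, but the two arcs it joins can close up outside the disk into a single circle that the surgery turns back into a single circle non-orientably in $\RP^2$. The paper explicitly flags this in the same-orientation case (``we cannot rule out the possibility of our local saddles being $1$-$1$ bifurcations and so we cannot simply quote Rasmussen's check directly''), and then argues that the same cancellation goes through anyway because the acyclic subcomplexes are still supported on the bigon circle's labels. So the overall structure you need is (i) the orientation case split, (ii) in the opposite-orientation case, the $\Gamma$-based choice of $\mc{C}_i$ and the treatment of a possible essential circle, and (iii) an explicit acknowledgment that $1$-$1$ bifurcations can occur but do not disturb the Gaussian elimination. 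Your ``main obstacle'' paragraph gestures at sign bookkeeping but does not identify either of these actual obstructions.
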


\begin{proof}
Here there are two cases to consider right away, based upon the relative orientations of the strands.  In the case that the strands are oriented in the same direction, we have the following local pictures for $D_1$ and $D_2$:
\[
D_1 = \ILtikzpic[xscale=.2,yscale=.2]{
	\draw[thick,->] (-60:3.5) to[out=120,in=-90] (-1.5,0) to[out=90,in=240] (60:3.5);
	\drawover[thick,->] {(-120:3.5) to[out=60,in=-90] (1.5,0) to[out=90,in=-60] (120:3.5);}
	\draw[dashed] (0,0) circle (3.5);
}
\quad , \quad
D_2 = \ILtikzpic[xscale=.2,yscale=.2]{
	\draw[thick,->] (-60:3.5) to[out=120,in=-90] (1,0) to[out=90,in=240] (60:3.5);
	\draw[thick,->] (-120:3.5) to[out=60,in=-90] (-1,0) to[out=90,in=-60] (120:3.5);
	\draw[dashed] (0,0) circle (3.5);
}\quad .
\]
In this case, we quickly see that we have matching oriented resolutions $D_{1,o}=D_{2,o}$ allowing us to once again choose the same dividing circles $\mc{C}_1=\mc{C}_2$.  Here we cannot rule out the possiblity of our local saddles being 1-1 bifurcations and so we cannot simply quote Rasmussen's check directly.  However, the same reasoning leads to the same check and the same result.  To wit, $\mc{C}_1=\mc{C}_2$ implies that $\LeegenCi[1]_o = \LeegenCi[2]_o$ in $C(D_{1,o})=C(D_{2,o})$, and the map $\LCCi[1](D_1) \xrightarrow{\phi_R} \LCCi[2](D_2)$ is built by collapsing the same acyclic complexes as in $S^3$, indicating that it is the identity map on the summand $C(D_{1,o})$ as desired.

In the case that the strands are oriented in opposite directions, we have the following local pictures for $D_1$ and $D_2$:
\[
D_1 = \ILtikzpic[xscale=.2,yscale=.2]{
	\draw[thick,<-] (-60:3.5) to[out=120,in=-90] (-1.5,0) to[out=90,in=240] (60:3.5);
	\drawover[thick,->] {(-120:3.5) to[out=60,in=-90] (1.5,0) to[out=90,in=-60] (120:3.5);}
	\draw[dashed] (0,0) circle (3.5);
}
\quad , \quad
D_2 = \ILtikzpic[xscale=.2,yscale=.2]{
	\draw[thick,<-] (-60:3.5) to[out=120,in=-90] (1,0) to[out=90,in=240] (60:3.5);
	\draw[thick,->] (-120:3.5) to[out=60,in=-90] (-1,0) to[out=90,in=-60] (120:3.5);
	\draw[dashed] (0,0) circle (3.5);
}\quad ,
\]
with oriented resolutions matching outside of the local pictures
\[
D_{1,o} = \ILtikzpic[xscale=.2,yscale=.2]{
	\draw[thick,<-] (-60:3.5) to[out=120,in=60] (-120:3.5);
	\draw[thick,->] (60:3.5) to[out=-120,in=-60] (120:3.5);
	\draw[thick] (0,0) circle (1.4);
	\draw[thick,<-] (-.1,1.4)--(.1,1.4);
	\draw[dashed] (0,0) circle (3.5);
}
\quad , \quad
D_{2,o} = \ILtikzpic[xscale=.2,yscale=.2]{
	\draw[thick,<-] (-60:3.5) to[out=120,in=-90] (1,0) to[out=90,in=240] (60:3.5);
	\draw[thick,->] (-120:3.5) to[out=60,in=-90] (-1,0) to[out=90,in=-60] (120:3.5);
	\draw[dashed] (0,0) circle (3.5);
}\quad ,
\]
where again we have drawn the inherited orientations despite the fact that we will choose the alternating orientations $o_{\mc{C}_i}$ for our set of auxiliary diagram choices.

We now let $\Gamma$ denote the connected trivalent graph consisting of $D_{2,o}$ together with the obvious arc along which surgery induces an oriented saddle leading to $D_{1,o}$ (after a further oriented birth cobordism).

\[\Gamma := \ILtikzpic[xscale=.2,yscale=.2]{
	\draw[thick,<-] (-60:3.5) to[out=120,in=-90] (1,0) to[out=90,in=240] (60:3.5);
	\draw[thick,->] (-120:3.5) to[out=60,in=-90] (-1,0) to[out=90,in=-60] (120:3.5);
	\draw[very thick,red] (-1,0)--(1,0);
	\draw[dashed] (0,0) circle (3.5);}
\]

If $\Gamma$ contains no essential circles, then it is local and $\mc{C}_1=\mc{C}_2$ can be chosen entirely disjoint from these local pictures, allowing us to use Rasmussen's check in \cite{Rasmussen} (note that once again, there are no 1-1 bifurcations available in the local picture $D_{1,o}$).  If $\Gamma$ does contain an essential circle, a case-by-case check (using the orientation of $D_{2,o}$) shows that in fact $D_{2,o}$ must contain an essential circle $\mc{C}_2$.  Surgery along the extra arc provides a method to see that $D_{1,o}$ must also contain a corresponding essential circle $\mc{C}_1$ so that the resulting alternating orientations $o_{\mc{C}_i}$ on $D_{i,o}$ lead to a check that is equivalent to the check in \cite{Rasmussen} (in this case there are no 1-1 bifurcations in the entire complex, as in Theorem \ref{thm:essential circle in resolutions gives sign assignment}).

\end{proof}

\begin{lemma}[Reidemeister 3]\label{lem:Reid 3 on Lee gens}
	Let $D_1,D_2\subset \RP^2$ be two oriented link diagrams related by a single Reidemeister 3 move.  Then there exists a choice of dividing circles $\mc{C}_i$ for $D_{i,o}$ such that the chain homotopy equivalence of Theorem \ref{thm:Reidemeister moves give chain homotopy equivalences} descends to a map on homology
	\[\LHCi[1](D_1) \xrightarrow{\phi_R} \LHCi[2](D_2)\]
	which satisfies
	\[\phi_R([\LeegenCi[1]_o]) = \lambda [\LeegenCi[2]_o], \]
	where $\lambda$ is some invertible scalar in $\Q$ (and likewise for the opposite orientation $\oo$).
\end{lemma}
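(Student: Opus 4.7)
The plan is to mirror the strategies used in the proofs of Lemmas~\ref{lem:Reid 1 on Lee gens} and~\ref{lem:Reid 2 on Lee gens}. First, I would appeal to the standard observation (see \cite{Rasmussen}) that, thanks to the invariance already established in Lemma~\ref{lem:Reid 2 on Lee gens}, it suffices to handle a single choice of orientations on the three strands in the Reidemeister~3 move; all other orientation configurations follow by composing with Reidemeister~2 moves.

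Fix such an R3 move, with local pictures inside a disk in $\RP^2$ and with $D_1,D_2$ identical outside this disk. Consider the oriented resolutions $D_{1,o}$ and $D_{2,o}$, which likewise agree outside the local disk. As in the proofs of the previous lemmas, I let $\Gamma$ denote the trivalent graph in the local disk consisting of $D_{2,o}$ together with the arcs indicating the saddles that contribute to the chain homotopy equivalence of Theorem~\ref{thm:Reidemeister moves give chain homotopy equivalences}. The argument then splits into two cases, according to whether any component of $\Gamma$ extends to an essential circle of $\RP^2$.

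In the first case, where $\Gamma$ is entirely contained in a disk (no essential circles), I can pick a common dividing circle $\mc{C}_1=\mc{C}_2$ disjoint from the local picture. The alternating orientations $o_{\mc{C}_i}$ on $D_{i,o}$ may then be chosen to agree outside the local disk and to match the standard Rasmussen-type orientations inside, and no 1-1 bifurcations occur in the local portion of either cube of resolutions. The Lee complex structure on the local region is therefore identical to the corresponding Lee complex in $S^3$, and Rasmussen's original argument in \cite{Rasmussen} (based on identifying acyclic subcomplexes and quotient complexes to cancel at each of the resolutions of the three local crossings) applies verbatim, giving $\phi_R([\LeegenCi[1]_o])=\lambda[\LeegenCi[2]_o]$ for some $\lambda\in\Q^{\times}$. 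In the second case, where $\Gamma$ contains an essential circle, a case-by-case inspection shows that $D_{2,o}$ itself must contain an essential circle, which I take as $\mc{C}_2$; following the arcs of $\Gamma$ through the local saddles identifies a corresponding essential circle of $D_{1,o}$ to use as $\mc{C}_1$. With these choices, every resolution appearing in the local portion of the cube for both $D_1$ and $D_2$ contains an essential circle, so again no 1-1 bifurcations occur, and Theorem~\ref{thm:essential circle in resolutions gives sign assignment} lets me work with the unsigned Table~\eqref{eq:differential table} together with a sign assignment on the cube. The computation then reduces to Rasmussen's $S^3$ check once more.

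I expect the main obstacle to be the case analysis in the second case: the local picture for R3 involves six arc segments and hence admits more configurations for $\Gamma$ than for R2, so verifying the consistent matching of essential circles between $D_{1,o}$ and $D_{2,o}$ (and across the various intermediate resolutions) requires an explicit but straightforward enumeration. Fortunately, once the dividing circles are fixed to contain these essential components, the orientation-matching argument from Lemma~\ref{lem:Reid 2 on Lee gens} carries over without modification, and the Lee generators correspond as required.
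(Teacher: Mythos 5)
Your outline of the $\Gamma$-analysis (partitioning into the case where $\Gamma$ is local versus the case where $\Gamma$ contains an essential circle, and using Theorem~\ref{thm:essential circle in resolutions gives sign assignment} to reduce to Rasmussen's $S^3$ check) is essentially the paper's argument for the harder orientation configuration, and that part is sound. The problem is the opening reduction.

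You claim that one orientation configuration of the R3 move suffices, with the others following by composing with R2 moves, and you attribute this to Rasmussen. That attribution is incorrect: Rasmussen's Proposition~2.3 handles the R3 orientation configurations directly, case by case. More substantively, even granting the underlying geometric fact (that a single braid-like R3 together with R2 moves generates the rest — a theorem of \"Ostlund/Polyak), applying it here requires an additional naturality argument that you do not supply: the lemma is about the specific chain homotopy equivalence constructed in Theorem~\ref{thm:Reidemeister moves give chain homotopy equivalences}, and you would need to show this map agrees on Lee homology (as a map of summands indexed by orientations, not merely as an abstract isomorphism) with the composite of the R2 and base-R3 maps. Since Lee homology of a link can have several generators in the same filtration level, filtration alone does not pin down this identification; you would need to track generators through each intermediate diagram, including compatible dividing circles at every stage, which is nontrivial and is not addressed.

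There is also an internal inconsistency: if you reduce to the braid-like (coherently oriented) R3 — the natural base case for the Polyak-style reduction — then $D_{1,o}=D_{2,o}$ and $\Gamma$ acquires no extra saddle arcs, so the $\Gamma$-analysis you describe is vacuous; if instead you intend the alternating orientation as the base case, the reduction runs in the wrong direction. The paper sidesteps all of this by splitting into two orientation cases directly: the coherent case, where $D_{1,o}=D_{2,o}$ forces $\mc{C}_1=\mc{C}_2$ and equality of Lee generators on the nose; and the alternating case, where the $\Gamma$-analysis (with two extra arcs) is needed. Dropping the reduction and adopting that two-case split would make your proof correct.
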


\begin{proof}
For a Reidemeister 3 move, the local picture consists of 3 strands forming a half twist.  Up to the obvious rotational symmetry then, there are two cases to consider depending on the relative orientations of these three strands.  The first case is when the strands are oriented in the same way (up to rotation), and so we may assume without loss of generality that we have local pictures
\[
D_1=\ILtikzpic[xscale=.3,yscale=.3]{
				\draw[thick,<-] (135:3.5) -- (2,-1) to[out=-45,in=90] (-45:3.5);
				\drawover[thick,<-]{ (90:3.5) to[out=-90,in=90] (-2,0) to[out=-90,in=90] (-90:3.5); }
				\drawover[thick,<-]{ (45:3.5) to[out=-90,in=45] (2,1) -- (-135:3.5); }
				\draw[dashed] (0,0) circle (3.5);				
			}
\quad , \quad
D_2=\ILtikzpic[xscale=-.3,yscale=.3]{
				\draw[thick,<-] (45:3.5) to[out=-90,in=45] (2,1) -- (-135:3.5); 
				\drawover[thick,<-]{ (90:3.5) to[out=-90,in=90] (-2,0) to[out=-90,in=90] (-90:3.5); }
				\drawover[thick,<-]{ (135:3.5) -- (2,-1) to[out=-45,in=90] (-45:3.5); }
				
	\draw[dashed] (0,0) circle (3.5);				
}
\quad .
\]
In this case, much like for the first case of Reidemeister 2, we see that $D_{1,o}=D_{2,o}$, allowing us to choose equal dividing circles $\mc{C}_1=\mc{C}_2$ so that $\LeegenCi[1]_o=\LeegenCi[2]_o$ in $C(D_{1,o})=C(D_{2,o})$ and, regardless of the placement of the dividing curve, we have a check identical to the cases (a) and (b) in \cite[Proof of Proposition 2.3, Reidemeister 3]{Rasmussen} where the relevant map $\phi_R$ must be the identity on these summands.

Meanwhile, if the strands are oriented in alternating fashion, we have local pictures
\[
D_1=\ILtikzpic[xscale=.3,yscale=.3]{
	\draw[thick,<-] (135:3.5) -- (2,-1) to[out=-45,in=90] (-45:3.5);
	\drawover[thick,->]{ (90:3.5) to[out=-90,in=90] (-2,0) to[out=-90,in=90] (-90:3.5); }
	\drawover[thick,<-]{ (45:3.5) to[out=-90,in=45] (2,1) -- (-135:3.5); }
	\draw[dashed] (0,0) circle (3.5);				
}
\quad , \quad
D_2=\ILtikzpic[xscale=-.3,yscale=.3]{
	\draw[thick,<-] (45:3.5) to[out=-90,in=45] (2,1) -- (-135:3.5); 
	\drawover[thick,->]{ (90:3.5) to[out=-90,in=90] (-2,0) to[out=-90,in=90] (-90:3.5); }
	\drawover[thick,<-]{ (135:3.5) -- (2,-1) to[out=-45,in=90] (-45:3.5); }
	\draw[dashed] (0,0) circle (3.5);				
}
\]
leading to oriented resolutions identical outside of the local pictures
\[
D_{1,o} = \ILtikzpic[xscale=.3,yscale=.3]{
	\draw[thick,->] (90:3.5) to[out=-90,in=-45] (135:3.5);
	\draw[thick,->] (-135:3.5) to[out=45,in=90] (-90:3.5);
	\draw[thick,->] (-45:3.5) to[out=135,in=-135] (45:3.5);	
	\draw[thick] (-1,0) circle (1);
	\draw[thick,->] (-2,.1)--(-2,-.1);
	\draw[dashed] (0,0) circle (3.5);
}
\quad , \quad
D_{2,o} = \ILtikzpic[xscale=-.3,yscale=.3]{
	\draw[thick,->] (90:3.5) to[out=-90,in=-45] (135:3.5);
	\draw[thick,->] (-135:3.5) to[out=45,in=90] (-90:3.5);
	\draw[thick,->] (-45:3.5) to[out=135,in=-135] (45:3.5);	
	\draw[thick] (-1,0) circle (1);
	\draw[thick,->] (-2,.1)--(-2,-.1);
	\draw[dashed] (0,0) circle (3.5);
}
\quad .
\]

From here, we proceed in similar fashion to the proof for Reidemeister 2.  We let $\Gamma$ denote the connected trivalent graph consisting of $D_{2,o}$, with disjoint circle discarded, but with two extra arcs indicating how to reach $D_{1,o}$ via two oriented saddles (up to oriented death and birth).
\[
\Gamma := \ILtikzpic[xscale=-.3,yscale=.3]{
	\draw[thick,->] (90:3.5) to[out=-90,in=-45] node[pos=.3, inner sep=0] (N){} (135:3.5);
	\draw[thick,->] (-135:3.5) to[out=45,in=90] node[pos=.7, inner sep=0](S){} (-90:3.5);
	\draw[thick,->] (-45:3.5) to[out=135,in=-135] node[pos=.3, inner sep=0](SW){} node[pos=.7, inner sep=0](NW){} (45:3.5);	
	\draw[very thick, red] (N)--(NW);
	\draw[very thick, red] (S)--(SW);
	\draw[dashed] (0,0) circle (3.5);
}
\]
Once again, if $\Gamma$ contains no essential circles, it is local and $\mc{C}_1=\mc{C}_2$ can be chosen disjoint from these local pictures, and Rasmussen's check in \cite{Rasmussen} works directly (again, there are no 1-1 bifurcations avaliable in these local pictures).  If $\Gamma$ does contain an essential circle, a slightly more invovled case-by-case check (again using the orientation of $D_{2,o}$) shows that $D_{2,o}$ itself must have contained an essential circle $\mc{C}_2$.  The orientable cobordism to reach $D_{1,o}$ again indicates how to define and orient $\mc{C}_1$ (contained in $D_{1,o}$) leading to a check equivalent to the one in \cite{Rasmussen}.  (Note that the disjoint circles in the $D_{i,o}$ will be on `opposite sides' of the $\mc{C}_i$ in these cases, but they will also be oriented oppositely from each other---this ensures that the computations are essentially the same as they were in $S^3$.)
\end{proof}

\begin{lemma}[Reidemeister 4 and 5]\label{lem:Reid 4 and 5 on Lee gens}
	Let $D_1,D_2\subset \RP^2$ be two oriented link diagrams related by a single Reidemeister 4 or Reidemeister 5 move.  Then there exists a choice of dividing circles $\mc{C}_i$ for $D_{i,o}$ such that the chain homotopy equivalence of Theorem \ref{thm:Reidemeister moves give chain homotopy equivalences} descends to a map on homology
	\[\LHCi[1](D_1) \xrightarrow{\phi_R} \LHCi[2](D_2)\]
	which satisfies
	\[\phi_R([\LeegenCi[1]_o]) = [\LeegenCi[2]_o],\]
	and likewise for the opposite orientation $\oo$.
\end{lemma}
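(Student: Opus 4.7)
The plan is to exploit the fact that Reidemeister moves R-IV and R-V are merely planar isotopies once one recalls that $\RP^2$ is obtained from a disk by antipodal identification on the boundary. Unlike the R-I, R-II, R-III moves, the fourth and fifth moves do not alter the number of crossings or the topology of any individual resolution; they simply push strands through the boundary circle of the fundamental disk. Consequently, for every vertex $v\in \cube^n$, there is a canonical homeomorphism of $\RP^2$ carrying $D_{1,v}$ to $D_{2,v}$, and this homeomorphism preserves the homological type of every circle. In particular, the oriented resolutions $D_{1,o}$ and $D_{2,o}$ are identified by the same planar isotopy, preserving the orientation inherited from $o$ on each circle.

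First I would fix a dividing circle $\mc{C}_2$ for $D_{2,o}$, and define $\mc{C}_1$ to be its image under the inverse of the natural isotopy; because the isotopy preserves essentiality of circles and disjointness from $D_{2,o}$, the curve $\mc{C}_1$ is indeed a valid dividing circle for $D_{1,o}$. I would then choose compatible resolution data: for each $v$, order the circles of $D_{1,v}$ in the same way as the corresponding circles of $D_{2,v}$, and orient them using the dividing-circle convention $o_{\mc{C}_i}$ from Definition~\ref{def:dividing circle}. Since $\mc{C}_1$ and $\mc{C}_2$ are isotopic, and distance-from-dividing-circle is a combinatorial quantity preserved by this isotopy, the orientations $o_{\mc{C}_1}$ and $o_{\mc{C}_2}$ correspond under the identification.

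Second I would verify that under these identifications the chain map $\phi_R$ of Theorem~\ref{thm:Reidemeister moves give chain homotopy equivalences} is simply the identification of tensor factors. Indeed, as observed in the proof of that theorem, R-IV and R-V already induce chain \emph{isomorphisms} vertex-by-vertex without any cancellation of acyclic subcomplexes, so there is no correction from chain homotopies. The Lee generator $\LeegenCi[1]_o$ labels the circles of $D_{1,o}$ by $a,b,\oa,\ob$ according to the comparison between $o$ and $o_{\mc{C}_1}$; since both the link orientation and the dividing-circle orientation are carried across the isotopy unchanged, these labels match exactly those defining $\LeegenCi[2]_o$. Hence $\phi_R(\LeegenCi[1]_o) = \LeegenCi[2]_o$ on the nose, and the same argument applies to $\oo$.

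The main subtlety, which is where the bulk of my care would go, is to check that the planar isotopy underlying R-IV or R-V really does carry each homologically essential circle to a homologically essential circle and preserves the induced orientation conventions, even though the strands cross through the boundary of the fundamental disk. This amounts to a small case analysis in the two local pictures from \cite[Figure 1]{Gabrovsek}, but once one establishes that the ``dividing-circle'' decoration is functorial under these moves, the computation of signs reduces to the trivial statement that the identity map is the identity; in particular there is no scalar $\lambda\neq 1$ appearing, which is why the statement is sharper here than for R-I, R-II, and R-III.
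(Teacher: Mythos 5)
Your proposal is correct and takes essentially the same route as the paper: both use the fact that R-IV and R-V induce chain isomorphisms arising from an isotopy of $\RP^2$ that carries every resolution diagram $D_{1,v}$ to $D_{2,v}$, and both define one dividing circle as the image of the other under that isotopy so that the Lee generators correspond on the nose. The extra checks you flag (preservation of essentiality, of distance-to-dividing-circle, and of orientation conventions) are the correct points to verify, and they hold for the elementary reason that the isotopy is a self-homeomorphism of $\RP^2$; the paper simply treats these as evident.
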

\begin{proof}
As mentioned in the proof of Theorem \ref{thm:Reidemeister moves give chain homotopy equivalences}, the fourth and fifth Reidemeister moves induce chain isomorphisms via isotopies (in $\RP^2$) of resolution diagrams, including the oriented resolution diagrams $D_{i,o}$.  Then for any choice of $\mc{C}_1$, we use this isotopy to define $\mc{C}_2$ so that the map clearly sends $\LeegenCi[1]_o$ directly to $\LeegenCi[2]_o$ (even before passing to homology).
\end{proof}

\subsection{The $s$-invariant of a link and basic properties}
\label{sec:mirror and conn sum}
The results of the previous section give us the naturality required to define the $s$-invariant of a link rather than a link diagram.

\begin{definition}\label{def:s invt of link}
The \emph{$s$-invariant} of an oriented link $L\subset\RP^3$, denoted $s(L)$, is defined to be the $s$-invariant of any oriented link diagram $D$ for $L$ (as in Definition \ref{def:s invariant for a diagram}).  
\end{definition}

The $s$-invariant is well-defined by Proposition \ref{prop:Lee gens under change of div circ} together with Lemmas \ref{lem:Reid 1 on Lee gens}-\ref{lem:Reid 4 and 5 on Lee gens}.

As a corollary of Theorem \ref{thm:local link is same as S3}, we have the following basic properties.

\begin{proposition}\label{prop:local links}
Let $L_\ell\subset B^3\subset \RP^3$ be a local link in $\RP^3$.  Then 
\[s_{\RP^3}(L_\ell)=s_{S^3}(L_\ell),\]
where $s_{\RP^3}$ and $s_{S^3}$ denote the $s$-invariants for links in $\RP^3$ and $S^3$ respectively.
\end{proposition}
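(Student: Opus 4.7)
The strategy is to leverage Theorem~\ref{thm:local link is same as S3} directly, checking that the chain isomorphism it provides matches the two $s$-invariants. My plan is as follows.

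First, I would pick a local link diagram $D \subset B^2 \subset \RP^2$ for $L_\ell$, together with an oriented essential circle $\mc{C} \subset \RP^2 \setminus D$. As in the proof of Theorem~\ref{thm:local link is same as S3}, $\mc{C}$ acts as a dividing circle for every resolution $D_v$, and the induced alternating orientations $o_{\mc{C}}$ make all saddles in the cube of resolutions orientable. This gives a chain isomorphism
\[
\Psi : \LCC(D) \xrightarrow{\cong} LC^*_{S^3}(D),
\]
where the right-hand side denotes the usual Lee complex for $D$ viewed as a planar diagram.

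Next, I would verify that $\Psi$ preserves the quantum filtration. Since $L_\ell$ is class-0, Proposition~\ref{prop:symmetry for class 1} gives that every generator sits in $k$-grading zero, so the tri-graded chain groups reduce to bigraded groups $(i,j)$. The formulas for $i$ and $j$ given in Section~\ref{sec:Defining deformed cx and Lee cx} depend only on $n^\pm$ and $|v|$, which are the same whether we regard $D$ as a diagram in $\RP^2$ or in $B^2$. So $\Psi$ is a filtered chain isomorphism of degree zero.

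Then I would identify Lee generators on both sides. By Definition~\ref{def:Lee generators}, the generator $\LeegenC_o$ is built by labeling each circle of $D_o$ by $a$ or $b$ depending on whether the orientation induced by $o$ agrees with $o_{\mc{C}}$. The dividing circle $\mc{C}$, being disjoint from $D$, plays exactly the role that ``infinity'' plays in Rasmussen's construction (cf.\ the remark after Definition~\ref{def:dividing circle} and \cite[Section 2.4]{Rasmussen}): the distance from $\mc{C}$ in $\RP^2 \setminus D_o$ coincides with the distance from infinity in the planar diagram, so $o_{\mc{C}}$ equals Rasmussen's alternating orientation on $D_o$. Therefore, under $\Psi$, the Lee generators $\LeegenC_o$ and $\LeegenC_{\bo}$ match Rasmussen's standard generators $\mathfrak{s}_o^{S^3}$ and $\mathfrak{s}_{\bo}^{S^3}$, up to a common invertible scalar coming from the particular sign assignment chosen (which does not affect homology classes up to units, and in particular preserves quantum filtration levels of $[\LeegenC_o \pm \LeegenC_{\bo}]$).

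Combining the previous two steps, the filtration levels $\q([\LeegenC_o + \LeegenC_{\bo}])$ and $\q([\LeegenC_o - \LeegenC_{\bo}])$ agree in both theories, so Definition~\ref{def:s invariant for a diagram} gives $s_{\RP^3}(L_\ell) = s_{S^3}(L_\ell)$. The only delicate point is the comparison of Lee generators under $\Psi$, and specifically the verification that $o_{\mc{C}}$ on $D_o$ coincides with Rasmussen's alternating orientation; this is the main obstacle, but it follows from a parity argument on the number of circles separating a given component of $D_o$ from $\mc{C}$.
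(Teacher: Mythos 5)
Your proposal is correct and follows essentially the same route as the paper: the paper's proof simply cites Theorem~\ref{thm:local link is same as S3} together with the observation that a dividing circle can be chosen disjoint from the ball containing $D$, while you have usefully unpacked why that chain isomorphism is filtered of degree zero and carries $\LeegenC_o$ to Rasmussen's generator (by matching $o_{\mc{C}}$ with the nesting-parity orientation). Nothing is missing; this is just a more detailed exposition of the paper's argument.
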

\begin{proof}
	This is a direct consequence of Theorem \ref{thm:local link is same as S3}, together with the fact that for local links a local diagram $D$ can be chosen with a dividing curve disjoint from the ball containing $D$.
\end{proof}

Furthermore, if $L_\ell\subset B^3\subset \RP^3$ is a local link, and $L\subset \RP^3$ is a second link (maybe not local), then we may define the disjoint union
\[L\sqcup L_\ell \subset \RP^3\# B^3 \subset \RP^3 \# S^3 \cong \RP^3\]
and the connected sum
\[L\# L_\ell \subset \RP^3\# B^3 \subset \RP^3 \# S^3 \cong \RP^3.\]

\begin{proposition}\label{prop:disj unions and conn sums}
Let $L_\ell$ and $L$ be links as above.

$(a)$ For the disjoint union, we have
\[   s_{\RP^3}(L\sqcup L_\ell) = s_{\RP^3}(L) + s_{\RP^3}(L_\ell) - 1 = s_{\RP^3}(L) + s_{S^3}(L_\ell) -1.\]

$(b)$ For the connected sum, we have
\[  s_{\RP^3}(L\#L_\ell) = s_{\RP^3}(L) + s_{\RP^3}(L_\ell) = s_{\RP^3}(L) + s_{S^3}(L_\ell).\]
\end{proposition}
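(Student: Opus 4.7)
The plan is to handle the disjoint union and connect sum formulas separately. Since the second equality in each line follows from Proposition \ref{prop:local links}, I focus on the first equality in each.

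For the disjoint union, take a diagram $D\subset\RP^2$ for $L$ and a local diagram $D_\ell\subset B^2\subset\RP^2$ for $L_\ell$ disjoint from $D$. The Lee complex then splits as a tensor product $LC^*(D\sqcup D_\ell)\cong LC^*(D)\otimes LC^*(D_\ell)$, where by Theorem \ref{thm:local link is same as S3} the second factor coincides with the standard $S^3$ Lee complex of $L_\ell$. Picking a dividing circle for $D\sqcup D_\ell$ disjoint from $D_\ell$ (and serving also as a local dividing circle for $D_\ell$) causes the Lee generators to factor as $\Leegen_o\otimes\Leegen_{o_\ell}$, and the quantum filtration is additive on tensor products of filtered complexes. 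This yields $\smin(L\sqcup L_\ell)=\smin(L)+\smin(L_\ell)$; applying $s=\smin+1$ from Corollary \ref{cor:smin and smax} then gives $s(L\sqcup L_\ell)=s(L)+s(L_\ell)-1$.

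For the connect sum, first note that $L\sqcup L_\ell$ and $L\#L_\ell$ differ by a single band surgery, giving a splitting cobordism $\Sigma\colon L\#L_\ell\to L\sqcup L_\ell$ with $\chi(\Sigma)=-1$; Theorem \ref{thm:BW} then yields the upper bound $s(L\#L_\ell)\leq s(L\sqcup L_\ell)+1=s(L)+s(L_\ell)$. For the matching lower bound I use a mirror/concordance trick. Since $L_\ell$ is local, so is its mirror $-L_\ell$ (reflected inside the local ball), and $L_\ell\#(-L_\ell)$ bounds a standard ribbon surface contained entirely in a $4$-ball in $I\times\RP^3$. Hence $L\#L_\ell\#(-L_\ell)$ is concordant to $L$, and Theorem \ref{thm:BW} applied in both directions gives $s(L\#L_\ell\#(-L_\ell))=s(L)$. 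Applying the upper bound just proved, now with $L\#L_\ell$ as the general link and $-L_\ell$ as the local piece, yields $s(L)\leq s(L\#L_\ell)+s(-L_\ell)$; combined with the mirror formula $s(-L_\ell)=-s(L_\ell)$, this produces $s(L\#L_\ell)\geq s(L)+s(L_\ell)$, matching the upper bound.

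The main obstacle is the mirror formula $s(-L_\ell)=-s(L_\ell)$ for local links in $\RP^3$. For knots this is immediate from Proposition \ref{prop:local links} and Rasmussen's $S^3$ mirror formula. For multi-component local links it follows from the fact that the Lee complex of $-L_\ell$ is dual to that of $L_\ell$ with the quantum grading negated, combined with the relation between $\smin$ and $s$ from Corollary \ref{cor:smin and smax}. The remaining technical point---that $L_\ell\#(-L_\ell)$ bounds a ribbon surface inside a $4$-ball in $I\times\RP^3$---is immediate once one observes that the ribbon surface is a local $S^3$ construction sitting in the ball containing $L_\ell\#(-L_\ell)$.
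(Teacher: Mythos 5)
Your argument for the disjoint union formula is sound: the tensor-product decomposition $LC^*(D\sqcup D_\ell)\cong LC^*(D)\otimes LC^*(D_\ell)$ with a dividing circle disjoint from the local ball, the factorization of Lee generators, and the additivity of quantum filtration levels under tensor product of bounded filtered complexes over $\Q$ give $\smin(L\sqcup L_\ell)=\smin(L)+\smin(L_\ell)$ and hence the claimed formula. Your derivation of the upper bound $s(L\#L_\ell)\leq s(L)+s(L_\ell)$ from the splitting saddle and Theorem~\ref{thm:BW} is also correct.

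The lower bound, however, has a genuine gap, because the Proposition is stated for \emph{links}, not just knots. Your mirror trick rests on two facts that fail when $L_\ell$ has $\ell\geq 2$ components. First, $L\#L_\ell\#(-L_\ell)$ has $|L|+2\ell-2>|L|$ components, so it cannot be \emph{concordant} to $L$; the cobordism you describe (a ribbon disk for the connect-summed component together with annuli capping $K_i\sqcup -K_i$ for $i\geq 2$) runs in only one direction for the hypothesis of Theorem~\ref{thm:BW}, giving only $s(L\#L_\ell\#(-L_\ell))\leq s(L)$, not equality. Second, and more fundamentally, the mirror formula $s(-L_\ell)=-s(L_\ell)$ is \emph{false} for multi-component links; the paper records this explicitly in the remark following Proposition~\ref{prop:mirrors}, stating that $s(m(L))$ is not determined by $s(L)$ when $L$ has at least two components (compare Section~7.2 of \cite{MMSW}). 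Your duality argument overlooks that for a multi-component link the classes $[\LeegenC_o\pm\LeegenC_{\oo}]$ are only two among $2^\ell$ Lee generators, and duality pairs them with a larger span rather than with the corresponding classes of the mirror, so it does not force the mirror formula. The paper's own proof avoids this entirely by following the direct filtration-level analysis of Lee generators in \cite[Prop.~3.11]{Rasmussen} (knots) and \cite[Prop.~7.6]{MMSW} (links), with the added bookkeeping of compatible dividing circles for $D$, $D\sqcup D_\ell$, and $D\# D_\ell$. Your argument does establish the statement in the case that $L_\ell$ is a knot (which suffices for Theorem~\ref{thm:local}(b)), but not the Proposition as stated.
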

\begin{proof}
Both of these formulas can be proven in the same manner as for links in $S^3$ (see \cite[Proposition 3.12]{Rasmussen} for the connect sum of knots, and \cite[Lemma 7.1]{BW}, \cite[Theorem 7.1]{MMSW} for disjoint unions and connect sums of links; all of the connect sum arguments utilize the exact sequence for a crossing which extends to our setting as in Proposition \ref{prop:skein exact seq}).  The only added wrinkle involves a consistent choice of dividing circles $\mc{C}_\#,\mc{C}_{\sqcup}$ for $L\# L_\ell$ and $L\sqcup L_\ell$ which appear in the proof. Note that after choosing a local diagram $D_\ell\subset B^2$ for $L_\ell$ and a diagram $D \subset \RP^2$ for $L$, we obtain a diagram $D \sqcup D_{\ell}$ for $L \sqcup L_\ell$ by inserting $D_{\ell} \subset B^2$ in $\RP^2$ away from the strands in $D$. From here we also obtain a diagram $D \# D_{\ell}$ for $L \# L_\ell$ by connecting the two diagrams via two parallel paths. The choice of dividing circles is unique for all the relevant diagrams in the case when $L$ (and hence also $L\sqcup L_\ell$ and $L\#L_\ell$) are of class-1. When they are of class-0, given a dividing circle $\mc{C}$ for a resolution $D_u$ of $D$, we can isotope $\mc{C}$ to be disjoint from the disk $B^2$ that contains $D_\ell$, and also disjoint from the connecting 
parallel paths. Then, we can use $\mc{C}=\mc{C}_{\sqcup}=\mc{C}_\#$ as a dividing circle for the corresponding resolutions of $D\sqcup D_{\ell}$ and $D \# D_\ell$ as well. With these compatible choices of dividing circles, the proofs go through as in the planar case.
\end{proof}

\begin{proof}[Proof of Theorem~\ref{thm:local}] 
Part (a) is Proposition~\ref{prop:local links}, and part (b) is the second statement in Proposition~\ref{prop:disj unions and conn sums}.
\end{proof}

\begin{corollary}\label{cor:smin and smax}
The two filtration levels $\q\left(\left[ \LeegenC_o + \LeegenC_{\oo} \right]\right)$ and $\q\left(\left[ \LeegenC_o - \LeegenC_{\oo} \right]\right)$ used to define $s(D)$ (for a link diagram $D$) differ by two.
\end{corollary}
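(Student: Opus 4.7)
The plan is to follow the strategy of Rasmussen's original argument \cite[Proposition~3.3]{Rasmussen} (used subsequently in the annular and $S^1\times S^2$ settings in \cite{GLW, MMSW}), combining a chain-level computation with a symmetry-based argument and a Lee-spectral-sequence analysis.

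First I would expand $\LeegenC_o$ and $\LeegenC_{\oo}$ in the $\{1, X, \oone, \ox\}$-basis via $a = 1+X$, $b = 1-X$, $\oa = \oone + \ox$, $\ob = \oone - \ox$. Setting $\eta_C = +1$ (resp.\ $-1$) when the label on circle $C$ is $a$ or $\oa$ (resp.\ $b$ or $\ob$), and writing $\mathbf{X}_S$ for the basis element with $X$- or $\ox$-labels on the circles in $S$ and $1$- or $\oone$-labels elsewhere, one obtains
\[
\LeegenC_o \pm \LeegenC_{\oo} \,=\, 2\sum_{|S|\text{ even/odd}} \Bigl(\prod_{C \in S}\eta_C\Bigr)\mathbf{X}_S.
\]
Since $\mathbf{X}_S$ has $j$-grading $j_{\max} - 2|S|$ (where $j_{\max}$ is the $j$-grading of $\mathbf{X}_\emptyset$), the top $j$-components of the sum and difference are $2\mathbf{X}_\emptyset$ at $j = j_{\max}$ and $2\sum_C \eta_C \mathbf{X}_{\{C\}}$ at $j = j_{\max}-2$, already differing by exactly $2$ at the chain level.

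Next, the $o \leftrightarrow \oo$ symmetry yields $\q([\LeegenC_o]) = \q([\LeegenC_{\oo}]) =: s_0$. From the identities $[\LeegenC_o] = \tfrac{1}{2}[\LeegenC_o + \LeegenC_{\oo}] + \tfrac{1}{2}[\LeegenC_o - \LeegenC_{\oo}]$ and $[\LeegenC_o \pm \LeegenC_{\oo}] = [\LeegenC_o] \pm [\LeegenC_{\oo}]$, the subadditivity $\q(x+y) \geq \min(\q(x),\q(y))$ of the filtration forces $\min(\q_+, \q_-) = s_0$, where $\q_\pm := \q([\LeegenC_o \pm \LeegenC_{\oo}])$.

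The main step is to upgrade this lower bound to the equality $\max(\q_+, \q_-) = s_0 + 2$. This will be carried out via the Lee spectral sequence associated to the $j$-filtration, whose $E_0$-differential is the Khovanov differential $\del^{(0)}$ and whose higher differentials arise from the $j$-raising components $\Phi^e_0$ and $\Phi^e_+$ (each of $j$-shift $+4$). By an argument analogous to \cite[Lemma~3.5]{Rasmussen}, the leading associated-graded representatives of $[\LeegenC_o \pm \LeegenC_{\oo}]$ sit at $j$-levels differing by exactly $2$, and the rank count from Theorem~\ref{thm:Lee homology of diagram} (which identifies $\LH^*(L)$ with $\Q^{O(L)}$) ensures that no unexpected cancellations happen in the spectral sequence, yielding $|\q_+ - \q_-| = 2$. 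The main obstacle is to trace the survival of the leading Khovanov classes through the spectral sequence in the $\RP^3$ setting, where the Lee differential carries additional components $\del^e_-$ and $\Phi^e_+$ coming from essential circles; however, choosing orientations via dividing curves as in Definition~\ref{def:dividing circle} and invoking Lemma~\ref{lem:signs for oriented saddle} reduce the sign bookkeeping to a form paralleling the $S^3$ case, so the Rasmussen-style induction on crossings goes through unchanged.
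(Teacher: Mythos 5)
Your proposal takes a genuinely different route from the paper, and the route as written has a gap at the decisive step. The paper's proof is a one-line reference to Rasmussen's argument for Proposition~3.3 (extended to links by Beliakova--Wehrli): one forms the connected sum of $D$ with the crossingless local diagram for the unknot, uses the tensor decomposition of Lee homology together with the known filtration degrees of the birth/death/saddle maps, and extracts the exact gap of $2$. You do not invoke this connected-sum mechanism at all.

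Two specific problems with the proposal. First, the chain-level observation in Step~1 is not pointing in a useful direction. With the filtration convention of this paper (subcomplexes are $\{j \geq j_0\}$), the filtration level $\q(z)$ of a chain $z$ is the \emph{minimum} $j$-grading among its homogeneous components, not the maximum; so identifying the top $j$-components of $\LeegenC_o \pm \LeegenC_{\oo}$ as $2\mathbf{X}_\emptyset$ and $2\sum_C \eta_C \mathbf{X}_{\{C\}}$ does not show anything about $\q_\pm$ ``at the chain level.'' Second, and more seriously, Step~3 --- which is the entire content of the corollary --- is not actually proved. You need both the lower bound $\max(\q_+,\q_-) > s_0$ and the upper bound $\max(\q_+,\q_-) \leq s_0+2$, and neither follows from the reasoning given. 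The appeal to ``an argument analogous to Lemma~3.5 of Rasmussen'' is out of place since that lemma concerns connected sums and no connected sum has been set up, and the claim that ``the rank count from Theorem~\ref{thm:Lee homology of diagram} ensures that no unexpected cancellations happen in the spectral sequence'' is simply not true: knowing that $\LH^*$ has rank $|O(L)|$ says nothing about \emph{which} filtration levels the surviving classes occupy. In particular it does not rule out, a priori, $\max(\q_+,\q_-) = s_0+6$ or the like. (A mod-$4$ observation does give $\q_+\neq \q_-$ since $\del$ preserves $j$ modulo $4$, but that only yields a lower bound, not the needed upper bound.) To close the gap you need to carry out the Rasmussen-style connected-sum-with-unknot argument that the paper cites, which is where the inequality $\max(\q_+,\q_-)\leq s_0+2$ actually comes from.
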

\begin{proof}
The proof is analogous to the proof given for knot diagrams in \cite[Proposition 3.3]{Rasmussen} (extended to links in \cite[Section 6.1]{BW}). It uses the connected sum of a given knot diagram with the standard crossingless (and in our case, local) diagram for the unknot.
\end{proof}

We also have the expected behavior for mirrors. 

\begin{proposition}\label{prop:mirrors}
Let $K\subset\RP^3$ be any knot in $\RP^3$. Denote by $m(K)$ the mirror, by $K^r$ the reverse, and let $-K=m(K^r)$.  Then 
$$s(-K)=s(m(K))=-s(K^r) = -s(K).$$
\end{proposition}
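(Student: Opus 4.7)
The plan is to reduce the three equalities to two basic facts and then combine them, following the standard argument for $S^3$.

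First, I would establish $s(K^r) = s(K)$ directly. Given a diagram $D$ for $K$ with orientation $o$, the same diagram with orientation $\bar o$ represents $K^r$. The oriented resolutions agree, $D_o = D_{\bar o}$, so we may use the same dividing circle $\mathcal{C}$ and the same set of resolution choices, producing the identical Lee complex $LC^*_{\mathcal{C}}(D)$. Under the swap of orientation, the Lee generators $\Leegen_o^{\mathcal{C}}$ and $\Leegen_{\bar o}^{\mathcal{C}}$ simply exchange roles. The class $[\Leegen_o^{\mathcal{C}} + \Leegen_{\bar o}^{\mathcal{C}}]$ is literally unchanged, while $[\Leegen_o^{\mathcal{C}} - \Leegen_{\bar o}^{\mathcal{C}}]$ differs only by an overall sign, which does not affect the filtration degree. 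Hence $s(K^r) = s(K)$ from Definition~\ref{def:s invariant for a diagram}.

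Second, I would establish $s(m(K)) = -s(K)$. Let $D^*$ be the mirror diagram obtained from $D$ by switching every crossing. Identify the cubes $\cube^n$ of resolutions via the involution $v \mapsto \mathbf{1} - v$, which swaps $0$- and $1$-resolutions. The unoriented resolution diagrams $D_v$ and $D^*_{\mathbf{1}-v}$ are equal in $\RP^2$, so we may use matching resolution choices (orderings, orientations, and dividing circles); in particular the oriented resolution for $(D,o)$ is identified with that for $(D^*, o)$. Define $\Psi \colon LC^*_{\mathcal{C}}(D) \to LC^*_{\mathcal{C}}(D^*)$ vertex-wise by the Frobenius involution $1 \leftrightarrow X$, $\oone \leftrightarrow \ox$. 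Using the grading table of Section~\ref{sec:Defining deformed cx and Lee cx}, together with the identity $n^\pm(D^*) = n^\mp(D)$, one checks that $\Psi$ negates each of the gradings $(i, j, k)$. Using the multiplication/comultiplication table \eqref{eq:differential table} with $s = t = 1$, one verifies that the Lee Frobenius algebra structures on $V$ and $\ov$ are self-dual under this involution, so that the multiplications $m$ along edges of $D$ are intertwined with the comultiplications $\Delta$ along the correspondingly reversed edges of $D^*$. A case-by-case check confirms that the sign conventions (P), (O), (C) are preserved by the identification of diagrams, so $\Psi$ is a chain isomorphism. Since the involution fixes $a = 1+X$ and $\bar a$ while sending $b = 1-X$ to $-b$ and similarly for $\bar b$, it sends the Lee generators for $(D,o)$ to the Lee generators for $(D^*,o)$ up to an overall sign. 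Applying $\Psi$ negates the filtration degrees of $[\Leegen_o \pm \Leegen_{\bar o}]$, and therefore $s(m(K)) = -s(K)$.

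Finally, I would combine the two equalities. Since $-K = m(K^r)$, the previous steps give
$$s(-K) = s(m(K^r)) = -s(K^r) = -s(K),$$
and analogously $s(m(K)) = -s(K^r) = -s(K)$, completing the proof.

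The main obstacle will be the verification in the second step that $\Psi$ is genuinely a chain map: one must carefully track how the Frobenius involution interacts with all the auxiliary data entering the sign rules (P), (O), (C) after swapping $0$- and $1$-resolutions. The symmetric choice of Lee parameters $s = t = 1$ and the symmetry of the table \eqref{eq:differential table} between rows should make everything match up, but because the rules depend on local orientations at each crossing and on the choice of dividing circles, the argument requires going through the various cases (1-2, 2-1, and 1-1 bifurcations) to confirm full compatibility.
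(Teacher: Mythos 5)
Your approach is essentially the same as the paper's (which itself defers to Rasmussen): for the reverse, the manifest symmetry of Definition~\ref{def:s invariant for a diagram} in $o$ and $\bar o$ gives $s(K^r)=s(K)$ immediately, and for the mirror one uses the Frobenius duality between the Lee complexes of $D$ and $D^*$, noting (as the paper emphasizes) that mirroring acts compatibly on resolution choices and dividing circles. Your write-up spells out what the paper leaves implicit, and you correctly identify the action on the Lee basis ($a\mapsto a$, $b\mapsto -b$ and likewise with bars) and the fact that the gradings negate using $n^\pm(D^*)=n^\mp(D)$.

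One point of imprecision worth fixing: your $\Psi\colon LC^*_{\mc{C}}(D)\to LC^*_{\mc{C}}(D^*)$ is not literally a chain isomorphism, since it sends the vertex $u$ to $\mathbf{1}-u$ and hence reverses the cube; the differentials go in opposite directions on the two sides ($\Psi_v\circ\del^e$ and $\del^{\ast,\flip(e)}\circ\Psi_v$ have incompatible sources and targets), so the diagram you want to commute does not even typecheck. The clean formulation is that the Frobenius trace pairing gives a filtered perfect pairing $LC^*(D)\otimes LC^*(D^*)\to\Q$ under which the two differentials are adjoint, identifying $LC^*(D^*)$ with the \emph{dual} cochain complex of $LC^*(D)$, so that $LH^*(D^*)\cong LH^*(D)^\vee$ with filtration levels negated. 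This is the shape of Rasmussen's original argument, and it is what makes $s(m(K))=-s(K)$ come out. You also correctly flag that the substantive remaining work is verifying that the sign rules (P), (O), (C) are compatible with this identification after reversing the cube; note that the discrepancies in rules (O) and (C) are edge-dependent but generator-independent, so they amount to a sign reassignment on the cube which can be absorbed (this is where choosing a single dividing circle for both $D$ and $D^*$, as you do, is essential).
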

\begin{proof}
For the statement about mirrors, the proof is similar to that in \cite{Rasmussen}, noting that taking the mirror of a knot diagram gives a well-defined operation on dividing curves as well. For the statement about the reverse, note that changing the orientation of $K$ switches the orientations $o$ and $\oo$, but the definition of the $s$-invariant uses both $o$ and $\oo$ on an equal footing.
\end{proof}

\begin{remark}
Similarly to what happens in $S^3$, for links $L \subset \RP^3$ with at least two components, we still have $s(L^r)=s(L)$, but the quantity $s(-L)=s(m(L))$ is not determined by $s(L)$. Compare \cite[Section 7.2]{MMSW}.
\end{remark}

\subsection{The family of invariants $s_{\tau}$ for $\tau\in[0,2]$ are all the same}

For annular links $L\subset S^1\times D^2$, the Lee differential is filtered with respect to the grading $j-\tau k$ for all $\tau\in[0,2]$.  In this way one can define an entire family of Rasmussen invariants $s_{\tau}(L)$ depending on the choice of filtration \cite{GLW}.  The presence of an involution $\Theta$ in that setting shows that $s_{1-\tau}(L)=s_{1+\tau}(L)$ for such links, but one can still have different $s_{\tau}$-invariants for different values of $\tau\in[0,1]$.

In much the same way, our Lee differential for links $L\subset \RP^3$ is also filtered with respect to the grading $j-\tau k$ for all $\tau\in [0,2]$, giving rise to an entire family of Rasmussen invariants  $s_{\tau}(L)$.  Once again we have an involution $\Theta$ (see Proposition \ref{prop:symmetry for class 1}) which ensures that $s_{1-\tau}(L)=s_{1+\tau}(L)$ for all $L\subset\RP^3$ (the proof of this fact is entirely analogous to the annular case).  However, in this section we will show that the invariants $s_{\tau}(L)$ for $\tau\in[0,2]$ are in fact all equal.  Conceptually, this reflects the fact that, for links in $\RP^3$, the $k$-grading is confined to the values $k\in\{-1,0,1\}$, forcing the Lee complex to be contained in a very thin strip in the $(j,k)$-plane. Together with the presence of the involution $\Theta$, this prevents any sort of interesting variance in $(j-\tau k)$-filtration levels as $\tau$ varies from $0$ to $2$. (This should not be too surprising in view of Remark~\ref{rem:bigraded}.)

To begin, we fix an oriented link diagram $(D,o)$ for a link $L\subset\RP^3$ and note that, if $L$ is a class-0 link, then the $k$-grading in $LC(L)$ is identically zero, and the statement is trivial.  Thus we may assume that $D$ is the diagram for a class-1 link, with $k$ gradings of generators either $1$ or $-1$, and Proposition \ref{prop:symmetry for class 1} providing the involution $\Theta$ which interchanges the labels $\oone$ and $\ox$ on the essential circle in each resolution $D_v$ of $D$.  We continue to let $\LeegenC_o,\LeegenC_{\oo} \in C(D_o)$ denote the Lee generators for $(D,o)$ with respect to a fixed choice of dividing circle $\mc{C}$ (which in this case is a choice of orientation for the unique essential circle in $D_o$).

\begin{lemma}[{{cf. \cite[Lemma 3.5]{Rasmussen}}}]
\label{lem:split LC into even and odd}
The complex $LC(D)$ splits as a direct sum
\[LC(D) \cong LC(D)_e\oplus LC(D)_o\]
where $LC(D)_e$ (respectively $LC(D)_o$) is spanned by the homogeneous basis elements in quantum grading $j \equiv |L| \!\pmod 4$ (respectively $j \equiv |L| +2 \!\pmod 4$), with $|L|$ denoting the number of components of $L$.

Moreover, one of the two elements $\LeegenC_o\pm\LeegenC_{\oo}$ is in $LC(D)_e$, and the other is in $LC(D)_o$. 
\end{lemma}
\begin{proof}
The proof given in \cite[Lemma 3.5]{Rasmussen} adapts readily to our setting.
\end{proof}

\begin{lemma}\label{lem:Theta swaps so+ and so-}
The involution $\Theta$ on $LC(D)$ interchanges the two summands $LC(D)_e$ and $LC(D)_o$, and moreover satisfies
\[\Theta(\LeegenC_o+\LeegenC_{\oo}) = \pm(\LeegenC_o-\LeegenC_{\oo}).\]
%and thus interchanges the two homology classes whose filtration levels are averaged to define $s_{\tau}(D)$.
\end{lemma}
\begin{proof}
The statement on interchanging the summands is immediate from the definition since $\oone$ and $\ox$ differ in $j$-degree by two.  In the Lee basis we see $\Theta(\oa)=\oa$ and $\Theta(\ob)=-\ob$, from which the second statement follows since there is exactly one essential circle with a label of either $\oa$ or $\ob$ (the sign is determined by which of $\LeegenC_o,\LeegenC_{\oo}$ has label $\oa$, while the other has $\ob$).
\end{proof}

Let us use the notation $\q_{\tau}([z])$ to denote the $(j-\tau k)$-filtration level of a homology class $[z]\in\LH(D)$ for $\tau\in[0,2]$.  In light of Lemmas \ref{lem:split LC into even and odd} and \ref{lem:Theta swaps so+ and so-}, we may define $w\in LC(D)_e$ to be whichever of $\LeegenC_o\pm \LeegenC_{\oo}$ is contained in $LC(D)_e$, so that our desired filtrations levels $s_\tau(D)$ satisfy
\[s_{\tau}(D)= \frac{\q_{\tau}([w]) + \q_\tau([\Theta(w)])}{2}.\]
Moreover, since the splitting of Lemma \ref{lem:split LC into even and odd} respects gradings, we are free to compute filtration levels within each summand:
\[\q_\tau([w]) = \q_\tau([w]_e), \quad\quad \q_\tau([\Theta(w)]) = \q_\tau([\Theta(w)]_o),\]
where $[w]_e := [w] \cap LC(D)_e$ and $[\Theta(w)]_o := [\Theta(w)]\cap LC(D)_o$.

To track these filtration levels, we will consider the sets
\[\Lambda_e := \{j \,|\, j\equiv |L| \!\!\!\!  \pmod 4\}\times \{-1,1\} \quad \text{and} \quad \Lambda_o:= \{j \,|\, j\equiv |L|+2 \!\!\!\!  \pmod 4\} \times \{-1,1\}\]
of possible bidegrees of homogeneous generators in $LC(D)_e$ and $LC(D)_o$ respectively.  Each of $\Lambda_e,\Lambda_o$ can be endowed with a total order inherited from the lexicographic order on $\Z\times\{-1,1\}$, where we declare the ordering $-1 > 1$ on the second factor. This allows for filtration levels to be tracked via maps
\[\Lambda_\bullet \xrightarrow{\q^\bullet_\tau} \R\]
defined as $\q_\tau^\bullet(j,k) := j-\tau k$ for $\bullet\in\{e,o\}$ and $\tau\in[0,2]$.

In this way we can compute $\q_\tau([w]_e)$ as
\begin{equation}\label{eq:def of filt level} \q_\tau([w]_e) = \max_{w'\sim w} \min \q^e_\tau(\supp_e(w')),
\end{equation}
where $w'$ is homologous to $w$ in $LC(D)_e$, and  $\supp_e(w')\subset \Lambda_e$ denotes the set of bidegrees of all homogeneous summands appearing in the unique representation of $w'$ in the homogeneous basis for $LC(D)_e$.  Similarly we can compute
\begin{equation}\label{eq:def of filt level of involution}
\begin{aligned}
\q_\tau([\Theta(w)]_o) &= \max_{z' \sim \Theta(w)} \min \q_\tau^o( \supp_o(z') )\\
&= \max_{w'\sim w} \min \q_\tau^o ( \supp_o( \Theta(w')) ),
\end{aligned}
\end{equation}
where $\supp_o( z') \subset LC(D)_o$ is defined analogously to $\supp_e$.  To obtain the second equality we have used the fact that $\Theta$ is a chain involution to set $w':=\Theta(z')$ and take the max over all $w' \sim w$ in $LC(D)_e$.

The key point about tracking filtration levels in this way rests in the following two lemmas.

\begin{lemma}\label{lem:filt levels are order preserving}
Using the notation established above, the maps $\Lambda_\bullet\xrightarrow{\q_\tau^\bullet}\R$ for $\bullet\in\{e,o\}$ and $\tau\in[0,2]$ are order-preserving (although not strictly in the case that $\tau\in\{0,2\}$).
\end{lemma}
\begin{proof}
A simple arithmetic check.
\end{proof}

\begin{lemma}\label{lem:involution is order preserving}
The maps
\[\Lambda_e \xrightarrow{\Theta^e_o} \Lambda_o, \quad \quad \Lambda_o \xrightarrow{\Theta^o_e} \Lambda_e\]
defined by
\[\Theta_*^\bullet (j,k) := (j-2k,-k)\]
for $(\bullet,*)\in\{(e,o),(o,e)\}$ are order-preserving mutual inverses satisfying, for any $z\in LC(D)_\bullet$,
\[\supp_* (\Theta(z)) = \Theta_*^\bullet ( \supp_\bullet(z)).\]
\end{lemma}
\begin{proof}
Another arithmetic check, using the fact that $\Theta$ is a chain involution which affects the bigradings as indicated for the last claim.
\end{proof}

We are now ready to state and prove the main theorem of this section. (This proof was suggested by the referee.)

\begin{theorem}\label{thm:no extra s-invariants}
Let $D\subset\RP^2$ denote a fixed diagram for an oriented link in $\RP^3$.  For $\tau\in[0,2]$, let $s_{\tau}(D)$ denote the Rasmussen invariant of $D$ using the $(j-\tau k)$-filtration for the Lee complex $LC(D)$.  Then $s_{\tau}(D)=s_0(D)=s(D)$ is independent of $\tau$, and thus there is only one such Rasmussen invariant for links in $\RP^3$.
\end{theorem}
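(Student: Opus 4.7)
The plan is to assemble the theorem directly from the technical lemmas already in place, rather than to introduce new machinery. First I would dispose of the class-0 case: Proposition~\ref{prop:symmetry for class 1} forces $k\equiv 0$ on $LC^*(D)$, so the $(j-\tau k)$-filtration coincides with the $j$-filtration for every $\tau$, and $s_\tau(D)=s_0(D)$ trivially. For the remainder I assume $L$ is class-1 and adopt the notation of Lemmas~\ref{lem:Theta swaps so+ and so-} through \ref{lem:Theta min gen is same for all t}, together with the distinguished cycles $z^{\min}$ and generator $g_0$ constructed there.

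Next, Lemma~\ref{lem:Theta swaps so+ and so-} identifies the two homology classes whose $\q_\tau$-filtration levels average to $s_\tau(D)$ as $[z^{\min}]$ and $[\Theta(z^{\min})]$, up to signs that do not affect filtration levels. Lemma~\ref{lem:min gen is same for all t} says that the $\q_\tau$-filtration of $[z^{\min}]$ is realized by $g_0$ for every $\tau\in[0,2]$, and Lemma~\ref{lem:Theta min gen is same for all t} gives the analogous statement for $[\Theta(z^{\min})]$ with representative $\Theta(g_0)$. Combining these,
\[ s_\tau(D) \;=\; \frac{\q_\tau(g_0)+\q_\tau(\Theta(g_0))}{2} \qquad \text{for every } \tau\in[0,2]. \]

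To finish, I would apply Lemma~\ref{lem:min gen has X on ess circ} together with Lemma~\ref{lem:qt vs q0 on homogeneous gens}. Since $g_0$ carries the label $\ox$ on the unique essential circle, while $\Theta(g_0)$ carries $\oone$, these lemmas yield $\q_\tau(g_0)=\q_0(g_0)+\tau$ and $\q_\tau(\Theta(g_0))=\q_0(\Theta(g_0))-\tau$. Adding the two, the $\tau$-contributions cancel, and substituting back produces $s_\tau(D)=s_0(D)$, as required.

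The main obstacle in the overall argument is not this final compilation but the preceding lemmas, in particular the fact that a \emph{single} homogeneous generator $g_0$ realizes the $\q_\tau$-filtration of $[z^{\min}]$ for every value of $\tau\in[0,2]$ simultaneously (and likewise $\Theta(g_0)$ for $[\Theta(z^{\min})]$). That uniformity rests on the constraint $k\in\{-1,0,1\}$, which confines the Lee complex of a class-1 diagram to a thin bigraded strip; in combination with the involution $\Theta$ this leaves no room for the filtration levels to vary with $\tau$.
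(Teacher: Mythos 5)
Your argument is correct and matches the paper's own proof essentially step for step: dispose of class-0 via $k\equiv 0$, then combine Lemmas~\ref{lem:min gen is same for all t} and \ref{lem:Theta min gen is same for all t} to get $s_\tau(D)=\tfrac{1}{2}(\q_\tau(g_0)+\q_\tau(\Theta(g_0)))$, and finally use Lemmas~\ref{lem:min gen has X on ess circ} and \ref{lem:qt vs q0 on homogeneous gens} to see that the $\pm\tau$ contributions from $g_0$ (label $\ox$) and $\Theta(g_0)$ (label $\oone$) cancel. No gap and no divergence from the paper's route.
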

\begin{proof}
As mentioned above, if $D$ is the diagram for a class-0 link, then the $k$-grading in $LC(D)$ is identically zero and the statement is trivial.  Otherwise, we assume $D$ is the diagram for a class-1 link. We continue to use the splitting $LC(D)\cong LC(D)_e\oplus LC(D)_o$ and other notations introduced throughout this section.  In particular, letting $w \in \{\LeegenC_o \pm \LeegenC_{\oo}\}$ denote whichever generator is in $LC(D)_e$, we have
\[2s_{\tau}(D)= \q_{\tau}([w]_e) + \q_\tau([\Theta(w)]_o).\]
Combining Equation \eqref{eq:def of filt level} with Lemma \ref{lem:filt levels are order preserving} allows us to conclude that
\[ \q_\tau([w]_e) = \q^e_\tau (\max_{w'\sim w} \min \supp_e(w')),\]
where the minimum and maximum are taken with respect to the total ordering on $\Lambda_e$.  Similarly we can combine Equation \ref{eq:def of filt level of involution} with Lemmas \ref{lem:filt levels are order preserving} and \ref{lem:involution is order preserving} to see that
\[ \q_\tau([\Theta(w)]) = \q_\tau^o(\Theta_o^e( \max_{w'\sim w} \min \supp_e(w'))).\]
If we set $(j_0,k_0):=(\max_{w'\sim w}\min\supp_e(w')) \in \Lambda_e$, then we have
\begin{align*}
2s_\tau(D) &= \q_{\tau}([w]_e) + \q_\tau([\Theta(w)]_o)\\
&= \q_\tau^e(j_0,k_0) + \q_\tau^o (\Theta^e_o(j_0,k_0))\\
&= j_0-\tau k_0 + (j_0-2k_0 +\tau k_0)\\
&= 2(j_0-k_0),
\end{align*}
which is independent of $\tau$.
\end{proof}

\section{Cobordisms}
\label{sec:cobordisms}

The goal of this section is to prove Theorem \ref{thm:BW} from the Introduction.  That is to say, we wish to bound the genus of oriented link cobordisms $\Sigma\subset\RP^3\times I$ from $L_1$ to $L_2$ using the $s$-invariant of the two boundary links.  Following \cite{Rasmussen,BW}, we will do this by first assigning a map $\phi_\Sigma:LC^*(L_1)\rightarrow LC^*(L_2)$ to $\Sigma$, and then analyzing its effect on Lee generators and its filtration degree.

As in \cite{Rasmussen}, any cobordism can be decomposed into a sequence of single Reidemeister and Morse moves, and so we focus on these individually first.  In the case that $\phi_\Sigma$ corresponds to a single Reidemeister move, this analysis has already been carried out in Section \ref{sec:Reid moves}.  In Section \ref{sec:Morse moves} we present the similar (and simpler) analysis for maps assigned to single Morse moves, before proceeding to the general case and proof of Theorem \ref{thm:BW} in Section \ref{sec:general cobs on Lee gens}.  Throughout we will continue to use the notation  $\LCC(D)$ for the Lee complex of a diagram $D$ using a set of auxiliary diagram choices which uses the orientation $o$ of $D$ to locally orient all crossings, and which orients $D_o$ using $o_{\mc{C}}$ via a choice of dividing circle $\mc{C}$.  We then have Lee generators denoted $\LeegenC_o,\LeegenC_{\oo}\in \LCC(D)$ generating two summands of the Lee homology $LH^*_{\mc{C}}(D)\cong \Q^{|O(L)|}$ as in Theorem \ref{thm:Lee homology of diagram}.

\subsection{The effect of elementary Morse moves on Lee generators}
\label{sec:Morse moves}

The maps $\phi_\Sigma$ for single Morse moves are defined in precisely the same way as for Khovanov and Lee complexes in $S^3$.

\begin{definition}\label{def:elem Morse map}
	Let $\Sigma:L_1\rightarrow L_2$ be a link cobordism in $\RP^3\times I$ consisting of a single elementary Morse move (birth, death, or saddle).  Fix two link diagrams $D_1,D_2$ for $L_1,L_2$.  Then the induced chain map on the deformed complexes
	\[ \KC_d^*(D_1) \xrightarrow{\phi_\Sigma}  \KC_d^*(D_2)\]
	is defined on each resolution of $D_1$ as follows.  That is to say, a birth places a label $1$ on the new (local) circle, as in
	\[ z \mapsto z\otimes 1\]
	for any generator $z\in  \KC_d^*(D_1)$.	A death acts as the indicator function for $X$ on the dying (local) circle, as in
	\[ z\otimes 1 \mapsto 0,\quad z\otimes X \mapsto z.\]
	Finally, a saddle acts as $\pm m, \pm \Delta,$ or $0$ depending on its bifurcation type, just as it would for an edge map in a cube of resolutions.  Specializing to the Lee complex, we see that each of these maps is filtered of degree equal to the Euler characteristic of $\Sigma$, just as for $S^3$ (with the exception of the unorientable saddle, which gives the zero map).
\end{definition}

Because birth and death cobordisms are entirely local, only applying to local circles in $\RP^2$, Lemmas \ref{lem:births on Lee gens} and \ref{lem:deaths on Lee gens} below are immediate from the definition in the same way as they are for links in $S^3$.

\begin{lemma}[Birth maps]\label{lem:births on Lee gens}
	Let $D_1,D_2\subset \RP^2$ be two link diagrams related by a single birth cobordism $\Sigma$.  Fix an orientation $o_1$ on $D_1$, which induces two orientations $o_2',o_2''$ on $D_2$ according to the choice of orientation for the newly birthed circle.  Then there exists a choice of dividing circles $\mc{C}_i$ far from the point of birth for $D_{i,o_i}$ such that the induced map on Lee homology
	\[\LHCi[1](D_1) \xrightarrow{\phi_\Sigma} \LHCi[2](D_2)\]
	satisfies 
	\[\phi_\Sigma([\LeegenCi[1]_{o_1}]) = \frac{1}{2} ( [\LeegenCi[2]_{o_2'}] + [\LeegenCi[2]_{o_2''}] ),\]
	and likewise for the opposite orientation $\oo_1$ on $D_1$.
\end{lemma}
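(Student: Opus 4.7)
The plan is to verify the formula at the chain level, where it reduces to the identity $1 = \frac{1}{2}(a+b)$ applied to the newborn circle; the homology statement then follows immediately.

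First I would choose compatible resolution data. The birth takes place inside a small disc $B \subset \RP^2$ disjoint from $D_1$, and the unoriented resolution $D_{2,o_2'} = D_{2,o_2''}$ is obtained from $D_{1,o_1}$ by inserting a single local, trivial circle in $B$. Given any dividing circle $\mc{C}_1$ for $D_{1,o_1}$, I can isotope it to avoid $B$ and set $\mc{C}_2 := \mc{C}_1$; this remains a valid dividing circle since the newborn circle is local. I would then match the resolution choices for $D_1$ and $D_2$ by ordering the newborn circle last in every resolution of $D_2$ and extending $o_{\mc{C}_1}$ in the obvious way to $o_{\mc{C}_2}$. With these conventions, Definition~\ref{def:elem Morse map} makes $\phi_\Sigma$ act as $z \mapsto z \otimes 1$ on the newborn circle, in every resolution.

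Second, I would apply this to the Lee generator. The identity $1 = \frac{1}{2}(a+b)$ gives
\[
\phi_\Sigma\bigl(\LeegenCi[1]_{o_1}\bigr) = \LeegenCi[1]_{o_1}\otimes 1 = \frac{1}{2}\bigl(\LeegenCi[1]_{o_1}\otimes a + \LeegenCi[1]_{o_1}\otimes b\bigr).
\]
The dividing circle $\mc{C}_2$ orients the newborn circle in a definite way, and by construction exactly one of $o_2', o_2''$ matches $o_{\mc{C}_2}$ on that circle while the other reverses it. Applying Definition~\ref{def:Lee generators} to the new circle alone identifies the two summands above with $\LeegenCi[2]_{o_2'}$ and $\LeegenCi[2]_{o_2''}$, so the formula already holds on chains; passing to homology yields the statement. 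The argument with $\oo_1$ in place of $o_1$ is identical, since all other circles and their orientations are unchanged.

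The only subtle point, and what I expect to be the main (mild) check, is that no extra signs arise. Because the birth is supported inside $B$, the orderings and orientations of the circles inherited from $D_1$ are preserved across $\phi_\Sigma$, so the sign rules (P), (O), (C) of Section~\ref{sec:Defining deformed cx and Lee cx} contribute trivially; and the change-of-resolution-choice formula of Proposition~\ref{prop:Lee gens under change of div circ}, when applied to the single possible change of orientation on the newborn circle, reproduces precisely the interchange of $a$ and $b$ used in the identification above.
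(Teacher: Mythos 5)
Your argument is correct and is exactly what the paper intends when it declares this lemma ``immediate from the definition in the same way as \ldots for links in $S^3$'': a birth creates a local circle away from everything else, so one takes $\mc{C}_2 = \mc{C}_1$ pushed off the birth disc, orders the new circle last, and the identity $1 = \tfrac12(a+b)$ on the newborn circle factor gives the formula on the nose at the chain level. The only quibble is that your closing appeal to the sign rules (P), (O), (C) and to Proposition~\ref{prop:Lee gens under change of div circ} is not really needed --- those govern the differential and changes of resolution choice, not the birth map, which is simply $z \mapsto z\otimes 1$ by Definition~\ref{def:elem Morse map} once compatible choices are fixed --- but this does not affect the correctness of the proof.
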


\begin{lemma}[Death maps]\label{lem:deaths on Lee gens}
	Let $(D_1,o_1),(D_2,o_2)\subset \RP^2$ be two oriented link diagrams related by a single death cobordism $\Sigma$.  Then there exists a choice of dividing circles $\mc{C}_i$ for $D_{i,o_i}$ such that the induced map on Lee homology
	\[\LHCi[1](D_1) \xrightarrow{\phi_\Sigma} \LHCi[2](D_2)\]
	satisfies
	\[\phi_\Sigma([\LeegenCi[1]_{o_1}]) = [\LeegenCi[2]_{o_2}], \]
	and likewise for the opposite orientations $\oo_1,\oo_2$.
\end{lemma}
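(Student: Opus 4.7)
The plan is to reduce to a local computation at the dying circle. The dying circle $C \subset L_1$ must be null-homologous in $\RP^3$, since it bounds a disk in the cobordism $\Sigma \subset I \times \RP^3$; in particular $C$ is a class-$0$ component. By Lemmas \ref{lem:Reid 1 on Lee gens}--\ref{lem:Reid 4 and 5 on Lee gens}, Reidemeister moves preserve Lee generators up to invertible scalars, so after such moves I may assume $D_1 = D_2 \sqcup C$, where the projection of $C$ to $\RP^2$ is a crossingless loop bounding a disk $\Delta \subset \RP^2$ that is disjoint from $D_2$.

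The key step is to make the coordinated choice $\mc{C}_1 := \mc{C}_2$, picking any dividing circle $\mc{C}_2$ for $D_{2,o_2}$ (which is automatically disjoint from $\Delta$). Because $\Delta$ is isolated in $\RP^2$, the distances from $\mc{C}_1$ to circles other than $C$ agree with those from $\mc{C}_2$, so the induced orientations $o_{\mc{C}_1}$ and $o_{\mc{C}_2}$ coincide on all shared circles. Ordering the circles so that $C$ appears last, the Lee generator factors as
\[
\Leegen^{\mc{C}_1}_{o_1} \;=\; \Leegen^{\mc{C}_2}_{o_2}\otimes \ell_C,
\]
where $\ell_C \in \{a,b\}$ is determined by whether $o_1|_C$ and $o_{\mc{C}_1}|_C$ agree. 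Applying the death map, which by Definition \ref{def:elem Morse map} sends $z \otimes 1 \mapsto 0$ and $z \otimes X \mapsto z$ and therefore reads $z \otimes a \mapsto z$ and $z \otimes b \mapsto -z$ in the Lee basis, immediately gives $\phi_\Sigma(\Leegen^{\mc{C}_1}_{o_1}) = \pm \Leegen^{\mc{C}_2}_{o_2}$.

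To normalize the sign to $+1$, in the class-$0$ case I would slide $\mc{C}_1$ across $C$, which toggles the parity of the distance from $\mc{C}_1$ to $C$ (hence $\ell_C$) without affecting the parities to any other circle since $\Delta$ is disjoint from the rest of $D_1$. In the class-$1$ case, $\mc{C}_1$ must coincide with the unique essential circle of $D_{1,o_1}$, and the sign is adjusted by reversing the orientation of $\mc{C}_1$ together with a compensating reorientation of $\mc{C}_2$, the sign bookkeeping being governed by Proposition \ref{prop:Lee gens under change of div circ}. The main obstacle is ensuring the identity holds simultaneously for both $o_1$ and $\oo_1$: since reversing $o_1$ globally flips every label $a \leftrightarrow b$, a parallel computation for $\oo_1$ produces the analogous statement, modulo an overall sign which is absorbed into the ambient sign freedom of the Lee generators that was already used in the Reidemeister-move lemmas of Section \ref{sec:Reid moves}.
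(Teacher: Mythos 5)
Your overall reduction to a local computation at the dying circle is the right approach, and matches the spirit of the paper's one-line justification (that these lemmas are immediate by locality, as in $S^3$). However, two points need revision.

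First, the opening invocation of Reidemeister moves is both unnecessary and counterproductive: by hypothesis $D_1$ and $D_2$ are \emph{already} diagrams related by a single elementary death, so $D_1$ already consists of $D_2$ together with an isolated crossingless circle $C$. Pre- or post-composing with Reidemeister-move maps would inject additional unit scalars $\lambda$ from Lemmas \ref{lem:Reid 1 on Lee gens}--\ref{lem:Reid 3 on Lee gens} that you would then need to track, which undermines exactly the normalization you are trying to establish.

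Second, and more substantively, your final paragraph's treatment of the sign does not hold up. You correctly observe that replacing $o_1$ by $\oo_1$ swaps $a\leftrightarrow b$ on every circle, in particular on $C$, and hence flips the coefficient produced by the death map. You then claim this is absorbed into "the ambient sign freedom of the Lee generators." That freedom---the $\pm$ in Proposition \ref{prop:Lee gens under change of div circ}---multiplies $\LeegenC_o$ and $\LeegenC_{\oo}$ by the \emph{same} sign: the factor is $\sign(\sigma)$, independent of $o$, while the $a\leftrightarrow b$ swap on reoriented circles is built into the change of Lee generator and contributes no extra sign. Likewise, sliding $\mc{C}_1$ across $C$ (in the class-$0$ case) or reorienting $\mc{C}_1$ and $\mc{C}_2$ (in the class-$1$ case) flips $\ell^{o_1}_C$ and $\ell^{\oo_1}_C$ simultaneously, so they remain opposite. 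Hence one cannot arrange $\phi_\Sigma([\LeegenCi[1]_{o_1}]) = [\LeegenCi[2]_{o_2}]$ and $\phi_\Sigma([\LeegenCi[1]_{\oo_1}]) = [\LeegenCi[2]_{\oo_2}]$ with coefficient $+1$ for both orientations at once; the best achievable is $+1$ for one of $o_1,\oo_1$ and $-1$ for the other. This is the same phenomenon as for deaths in $S^3$, where the coefficient is $\pm1$ depending on how the dying circle's orientation compares to the chosen reference scheme. The discrepancy is harmless for every downstream use in the paper, where only the fact that the coefficient is a unit matters (compare Lemma \ref{lem:saddles on Lee gens} and Theorem \ref{thm:general cobs on Lee gens}, which carry explicit units $\lambda$). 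You should state the lemma with a $\pm$ (or a unit $\lambda$), and flag the issue rather than assert that the sign can be eliminated.
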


Meanwhile, saddle cobordisms require slightly more work, most of which has already been handled while considering Reidemeister 2 moves.

\begin{lemma}[Saddle maps]\label{lem:saddles on Lee gens}
	Let $D_1,D_2\subset \RP^2$ be two link diagrams related by a single saddle cobordism $\Sigma$.  If $\Sigma$ is not orientable, then the induced map on Lee homology
	\[\LH^*(D_1) \xrightarrow{\phi_\Sigma} \LH^*(D_2)\]
	is the zero map.  Otherwise we may fix an orientation of $\Sigma$ inducing orientations $o_i$ on the $D_i$.  Then there exists a choice of dividing circles $\mc{C}_i$ for $D_{i,o_i}$ such that
	\[\phi_\Sigma([\LeegenCi[1]_{o_1}]) = \lambda [\LeegenCi[2]_{o_2}],\]
	where $\lambda$ is some invertible scalar in $\Q$ (and likewise for the opposite orientations $\oo_1,\oo_2$).
\end{lemma}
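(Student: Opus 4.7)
The plan is to split the argument according to the type of bifurcation the saddle $\Sigma$ realizes on the resolutions. A single saddle in $\RP^3 \times I$ is either a $1$-$1$ bifurcation (attachment of a Möbius band, illustrated in Figure~\ref{fig:1-1 bifurcation}) or a $2$-$1$/$1$-$2$ bifurcation (attachment of a standard band). The first is precisely the non-orientable case, while the second is always orientable; so the two cases of the statement correspond exactly to these two types.

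First I would dispatch the non-orientable case. By construction (see \eqref{eq:dele}), every $1$-$1$ edge map in the cube is identically zero, and Definition~\ref{def:elem Morse map} extends the same prescription to $\phi_\Sigma$. Hence $\phi_\Sigma = 0$ on $\LCC(D_1)$, so it vanishes on Lee generators a fortiori.

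For the orientable case, I would mimic the dividing-circle setup from Lemma~\ref{lem:Reid 2 on Lee gens}. Form the trivalent graph $\Gamma$ obtained from $D_{2,o_2}$ by adjoining the saddle arc. If $\Gamma$ contains no essential circle, then it lies in a disk in $\RP^2$ and we can pick any dividing circle $\mc{C}_2$ disjoint from $\Gamma$ and set $\mc{C}_1 = \mc{C}_2$. If $\Gamma$ does contain an essential circle, then Lemma~\ref{lem:ess circ in Dv iff L is class-1}, together with the fact that the oriented saddle extends the orientation from $D_{1,o_1}$ to $D_{2,o_2}$, forces essential circles in both $D_{1,o_1}$ and $D_{2,o_2}$; take these as $\mc{C}_1, \mc{C}_2$ with matching orientations along the band. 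In both situations, the orientations $o_{\mc{C}_i}$ extend continuously across the saddle band, so Lemma~\ref{lem:signs for oriented saddle} applies: sign rules (O) and (C) collapse into a single global $\pm 1$ on the edge map. Then I would read off $\phi_\Sigma(\LeegenCi[1]_{o_1})$ from the Lee-basis tables in Section~\ref{sec:Lee}. For a merge, the two circles carry matching labels $a \otimes a$ or $b \otimes b$ (or the mixed essential versions $\bar{a}\otimes a$, $\bar{b}\otimes b$, and similarly on the other side), and the tables give $m(a\otimes a) = 2a$, $m(b\otimes b) = \pm 2b$, with the mixed terms $a\otimes b$, $b\otimes a$ never appearing. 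For a split, the analogous comultiplication rows give $\Delta(a) = a\otimes a$, $\Delta(b) = -b\otimes b$ (and the essential analogues). In every case the image is $\pm 2\,\LeegenCi[2]_{o_2}$ on the chain level, so $\lambda = \pm 2 \in \Q^\times$.

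The main obstacle is sign bookkeeping. The exact value of $\lambda$ depends on permutation sign (P), on the global sign coming from (O) and (C), and, in the essential-circle case, on how $\mc{C}_1$ and $\mc{C}_2$ match orientations across the band. The key simplification, already invoked above, is that Lemma~\ref{lem:signs for oriented saddle} collapses (O) and (C) into one edgewise sign, and (P) contributes only an edgewise sign once the two merged/split circles are ordered consistently. Consequently $\lambda$ is a single nonzero rational and the same computation with $o$ and $\bar{o}$ interchanged (which swaps $a \leftrightarrow b$, $\bar{a} \leftrightarrow \bar{b}$ on exactly those circles where $o$ and $o_{\mc{C}_1}$ disagree) yields the statement for the opposite orientations.
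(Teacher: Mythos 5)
Your dichotomy (non-orientable $\Sigma$ $\Leftrightarrow$ $1$-$1$ bifurcation on the underlying link; orientable $\Sigma$ $\Leftrightarrow$ merge or split on the link) is correct, and your treatment of the orientable case follows the paper's strategy: build the graph $\Gamma$, choose compatible dividing circles, invoke Lemma~\ref{lem:signs for oriented saddle}, and read off the Lee-basis table. However, your argument for the non-orientable case has a genuine gap, and it is not just a matter of presentation.

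You write that since $1$-$1$ edge maps vanish, ``$\phi_\Sigma = 0$ on $\LCC(D_1)$.'' This overstates what is true. The map $\phi_\Sigma$ is defined resolution by resolution, and the bifurcation type of the saddle on $D_{1,v}$ depends on $v$: the saddle arc $a$ connects two strands whose global connectivity (and hence whether they lie on the same circle of the resolution, and whether the resulting circle count goes up, down, or stays the same) changes as crossings are resolved differently. Even when $\Sigma$ is non-orientable (so the saddle is a $1$-$1$ move on the actual link $L_1$), there will in general be resolutions $D_{1,v}$ on which the two endpoints of $a$ land on different circles, giving a merge; there $\phi_\Sigma$ acts by $\pm m$, not by $0$. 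So $\phi_\Sigma$ is nonzero as a chain map, and the claimed ``a fortiori'' vanishing on Lee generators does not follow.

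What is actually needed is the weaker (and correct) statement that each class $[\LeegenCi[1]_{o_1'}]$ dies in homology, and this requires examining the oriented resolution $D_{1,o_1'}$ for every orientation $o_1'$ of $L_1$ (not just the orientation supposedly ``induced'' by $\Sigma$, which does not exist in this case). Non-orientability of $\Sigma$ forces the two strands of $L_1$ at the saddle arc to be incompatibly oriented for every $o_1'$. On $D_{1,o_1'}$ there are then two subcases. If the two endpoints of $a$ lie on the same Seifert circle, incompatibility of orientations forces a $1$-$1$ bifurcation there, and $\phi_\Sigma$ is zero on that summand. If they lie on different Seifert circles, the saddle is a merge, and one must run the Lee-basis computation: with a dividing circle chosen so that $o_{\mc{C}}$ is compatible across the saddle, the incompatibility of $o_1'$ means $o_1'$ agrees with $o_{\mc{C}}$ on exactly one of the two merging circles, so the labels in $\LeegenCi[1]_{o_1'}$ are mixed ($a\otimes b$ or $b\otimes a$), which the multiplication table sends to zero. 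This is precisely the part of ``Rasmussen's analysis'' the paper alludes to, and it is the part your proof skips. The paper's proof, unlike yours, loops over an arbitrary $o_1'$ from the start and folds the $1$-$1$ possibility into that case analysis, which is what makes the non-orientable conclusion fall out cleanly.

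A smaller gap in your orientable case: you assert that ``$\Gamma$ contains an essential circle'' together with Lemma~\ref{lem:ess circ in Dv iff L is class-1} forces essential circles in both $D_{i,o_i}$. But the essential cycle of $\Gamma$ might use the saddle arc $a$ itself, in which case $D_{2,o_2}$ need not contain an essential circle a priori. One has to rule out the ``corner-to-corner'' configuration of the essential cycle; as the paper observes, that configuration is exactly the one corresponding to a $1$-$1$ bifurcation and hence cannot occur when $\Sigma$ is orientable. Your proof should say this explicitly rather than rely on Lemma~\ref{lem:ess circ in Dv iff L is class-1} alone, which is a statement about resolutions of a link diagram, not about the auxiliary graph $\Gamma$.
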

\begin{proof}
As in \cite{Rasmussen}, we let $o_1'$ denote an arbitrary orientation on $D_1$ and see how it compares to possible orientations (or the lack thereof) for $\Sigma$.  Just as in the proof of Lemma \ref{lem:Reid 2 on Lee gens} for the Reidemeister 2 move, we let $\Gamma$ denote the connected trivalent graph incorporating the `support' of the saddle move
\[\Gamma := \ILtikzpic[xscale=.2,yscale=.2]{
	\draw[thick] (-60:3.5) to[out=120,in=-90] (1,0) to[out=90,in=240] (60:3.5);
	\draw[thick] (-120:3.5) to[out=60,in=-90] (-1,0) to[out=90,in=-60] (120:3.5);
	\draw[very thick,red] (-1,0)--(1,0);
	\draw[dashed] (0,0) circle (3.5);}.
\]
Then we repeat the analysis of $\Gamma$ used in the proof of Lemma \ref{lem:Reid 2 on Lee gens}, which shows that, in almost all cases, a dividing circle $\mc{C}_1$ for $D_{1,o_1'}$ can be chosen (either disjoint from $\Gamma$ or entirely contained in $D_{1,o_1'}$) such that Rasmussen's analysis in \cite{Rasmussen} passes through virtually unchanged.  There is one potential case here that did not appear in the Reidemeister 2 proof, where $\Gamma$ contains an essential circle which passes along the saddle arc from one corner of the local picture above to the opposite corner (this case was disallowed in the Reidemeister 2 proof due to the known orientations on the diagram $D_{1,o}$ being considered there).  However, it is easy to see that this case corresponds to the saddle inducing a 1-1 bifurcation on $D_{1,o_1'}$, and thus a zero map on this resolution.  This also implies that $\Sigma$ was incompatible with this orientation (and in fact was not orientable at all), verifying the claim in this case as well.
\end{proof}

\subsection{General cobordisms}\label{sec:general cobs on Lee gens}
In this section we use the lemmas of Sections \ref{sec:Reid moves} and \ref{sec:Morse moves} to conclude (following \cite{Rasmussen,BW,MMSW}) that oriented cobordisms between links $L_1$ and $L_2$ lead to bounds on the difference $s(L_1)-s(L_2)$. Compare the following theorem with \cite[Theorem 3.8]{MMSW}.

\begin{theorem}\label{thm:general cobs on Lee gens}
	Let $\Sigma\subset I\times \RP^3$ denote a cobordism between links $L_1\subset \{0\}\times \RP^3$ and $L_2\subset \{1\}\times\RP^3$ with link diagrams $D_1,D_2\subset\RP^2$.  Fix an orientation $o_1$ on $L_1$, and let $O(\Sigma,o_1)$ denote the set of orientations $o$ of $\Sigma$ whose induced orientation $o|_1$ on $L_1$ is $o_1$.  For any such orientation $o\in O(\Sigma,o_1)$, let $o|_2$ denote the induced orientation on $L_2$.
	
	Then there exists a choice of dividing circle $\mc{C}_1$ for $D_{1,o_1}$, and a choice of single dividing circle $\mc{C}_2$ for all of the various $D_{2,o|_2}$, together with an induced map on Lee homology $\LHCi[1](D_1) \xrightarrow{\phi_\Sigma} \LHCi[2](D_2)$ which is a filtered map of filtration degree $\chi(\Sigma)$ (the Euler characteristic of $\Sigma$) satisfying
	\begin{equation}\label{eq:gen cob on Lee gens}
		\phi_\Sigma([\LeegenCi[1]_{o_1}]) = \sum_{o\in O(\Sigma,o_1)} \lambda_o [\LeegenCi[2]_{o|_2}],
	\end{equation}
	where for each $o\in O(\Sigma,o_1)$, $\lambda_o$ is a unit in $\Q$.	
\end{theorem}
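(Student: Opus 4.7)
The plan is to build $\phi_\Sigma$ as the composition of chain maps associated to a decomposition of $\Sigma$ into elementary pieces, then track Lee generators through this composition by a careful induction. The argument follows the template of \cite{Rasmussen,BW,MMSW}, with the additional bookkeeping required to control the dividing circles that enter the definition of Lee generators.

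First I would decompose. Pick a generic movie presentation of $\Sigma$ so that between consecutive frames only one elementary modification takes place: one of the five Reidemeister moves in $\RP^2$, a birth, a death, an oriented saddle, or a non-orientable saddle. This yields a factorization $\Sigma = \Sigma_N \circ \cdots \circ \Sigma_1$ with intermediate diagrams $D_1 = E_0,E_1,\ldots,E_N = D_2 \subset \RP^2$. Define $\phi_\Sigma := \phi_{\Sigma_N}\circ\cdots\circ\phi_{\Sigma_1}$, where each $\phi_{\Sigma_i}$ is either the Reidemeister chain homotopy equivalence from Theorem~\ref{thm:Reidemeister moves give chain homotopy equivalences} or the elementary Morse map from Definition~\ref{def:elem Morse map}. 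Since Euler characteristic is additive under composition of cobordisms and each $\phi_{\Sigma_i}$ is filtered of degree $\chi(\Sigma_i)$, the composite $\phi_\Sigma$ is filtered of degree $\chi(\Sigma)$.

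Next I would prove \eqref{eq:gen cob on Lee gens} by induction on $i$, showing that at each stage
\[(\phi_{\Sigma_i}\circ\cdots\circ\phi_{\Sigma_1})([\LeegenCi[1]_{o_1}]) \;=\; \sum_{o\in O(\Sigma^{(i)},o_1)} \mu_o^{(i)} \,[\Leegen^{\mc{C}^o_i}_{o|_{E_i}}],\]
where $\Sigma^{(i)} := \Sigma_1\cup\cdots\cup\Sigma_i$, each $\mu_o^{(i)}\in\Q^\times$, and $\mc{C}^o_i$ is a dividing circle for the resolution $(E_i)_{o|_{E_i}}$. The inductive step is exactly what Lemmas~\ref{lem:Reid 1 on Lee gens}--\ref{lem:saddles on Lee gens} provide: Reidemeister, oriented-saddle, and death moves send each summand to a unit multiple of a single Lee generator; a birth doubles the indexing set and contributes an overall factor of $\tfrac12$ per branch; and a non-orientable saddle gives the zero map, which kills precisely those orientations of $\Sigma^{(i)}$ that cannot extend across $\Sigma_{i+1}$. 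At each stage the existence clause of the relevant lemma is used to pick a dividing circle $\mc{C}^{o'}_{i+1}$ compatible with the step.

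Finally I would consolidate the dividing circles at the target. For every $o\in O(\Sigma,o_1)$ the induction has produced some $\mc{C}^o_N$ for $(D_2)_{o|_2}$. When $L_2$ is class-1, the unique essential circle is intrinsic to each oriented resolution of $D_2$ as an unoriented curve, so a single $\mc{C}_2$ (with a chosen orientation) is a dividing circle for every $(D_2)_{o|_2}$; when $L_2$ is class-0, any fixed essential curve in $\RP^2$ disjoint from $D_2$ works for all of them. Applying Proposition~\ref{prop:Lee gens under change of div circ} summand by summand converts $[\Leegen^{\mc{C}^o_N}_{o|_2}]$ into $\pm [\LeegenCi[2]_{o|_2}]$, and absorbing the signs into the $\mu_o^{(N)}$ gives the required units $\lambda_o$.

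The main obstacle will be the global consistency of dividing circles throughout the induction: each of Lemmas~\ref{lem:Reid 1 on Lee gens}--\ref{lem:saddles on Lee gens} only guarantees that \emph{some} dividing circle works at its own step, and different moves may want incompatible choices (particularly when a birth or a saddle changes the number or homology class of circles in the intermediate resolution). The resolution is that Proposition~\ref{prop:Lee gens under change of div circ} converts any two choices into one another by a filtered isomorphism of degree zero that alters Lee generators only by signs, so intermediate incompatibilities can be absorbed into the scalars $\mu_o^{(i)}$, and a single clean $\mc{C}_2$ can always be chosen at the end.
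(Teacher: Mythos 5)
Your proposal follows the same route as the paper: decompose $\Sigma$ into elementary Reidemeister and Morse moves, invoke Lemmas~\ref{lem:Reid 1 on Lee gens}--\ref{lem:Reid 4 and 5 on Lee gens} and \ref{lem:births on Lee gens}--\ref{lem:saddles on Lee gens} at each step, and use Proposition~\ref{prop:Lee gens under change of div circ} to reconcile the dividing circles between steps, deferring the combinatorial bookkeeping of orientation sets to the double-summation argument of \cite{MMSW}. The only organizational difference is that the paper explicitly interweaves change-of-dividing-circle chain isomorphisms $\phi$ into the composite (so that each $\phi_{\Sigma_i}$ literally has matching source and target), whereas you let the dividing circles drift during the induction and normalize only at the end; these are equivalent, and you correctly identify Proposition~\ref{prop:Lee gens under change of div circ} as the tool that makes this painless. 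One small imprecision: in your final consolidation step you assert that for a class-1 $L_2$ the essential circle is ``intrinsic'' to every oriented resolution $D_{2,o|_2}$ as an unoriented curve. For a multi-component class-1 link, different orientations $o|_2$ can produce different oriented resolutions with genuinely different essential circles, so a single curve serving as a dividing circle for all of them is not automatic. This does not affect the application to Theorem~\ref{thm:BW}, where the connectedness hypothesis forces $|O(\Sigma,o_1)|=1$, and the paper elides the same point; but it is worth flagging if you want the statement in full generality for links.
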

\begin{proof}
As in \cite{Rasmussen}, we begin by decomposing $\Sigma$ into a sequence of elementary cobordisms
\[L_1=M_1 \xrightarrow{\Sigma_1} M_2 \xrightarrow{\Sigma_2} \cdots \xrightarrow{\Sigma_{m-1}} M_m = L_2,\]
where each intermediate link $M_i$ has a diagram $E_i\subset \RP^2$.  Then for each elementary $\Sigma_i$ and orientation $o_i$ on $M_i$, one of the Lemmas \ref{lem:Reid 1 on Lee gens}-\ref{lem:Reid 4 and 5 on Lee gens} (for Reidemeister moves, which have filtration degree zero by Theorem \ref{thm:Reidemeister moves give chain homotopy equivalences}) or \ref{lem:births on Lee gens}-\ref{lem:saddles on Lee gens} (for Morse moves) provides a choice of dividing circles which we denote $\mc{C}_i$ for $E_i$ and $\mc{C}_{i+1}'$ for $E_{i+1}$ giving $\LHCi[i](E_i)\xrightarrow{\phi_{\Sigma_i}} \LH^*_{\mc{C}_{i+1}'}(E_{i+1})$ satisfying
\[\phi_{\Sigma_i} \left(\left[\LeegenCi[i]_{o_i}\right]\right) = \sum_{o\in O(\Sigma,o_i)} \lambda_o \left[\Leegen_{o|_{i+1}}^{\mc{C}_{i+1}'}\right].\]
(Note that in all cases but for births, this sum for an elementary cobordism consists of either one term or none at all.)

We then interweave these maps $\phi_{\Sigma_i}$ with maps $\phi$ coming from changing dividing circles from $\mc{C}_i'$ to $\mc{C}_i$ via Theorem \ref{thm:  KC is well-defined}.  These maps have filtration degree zero and have the proper behavior on Lee generators via Proposition \ref{prop:Lee gens under change of div circ}, allowing us to build our desired map $\phi_\Sigma$ as the composition
\[
\LHCi[1](E_1) \xrightarrow{\phi_{\Sigma_1}} \LH^*_{\mc{C}_2'} (E_2)
\xrightarrow{\phi} \LHCi[2](E_2) \xrightarrow{\phi_{\Sigma_2}} \LH^*_{\mc{C}_3'}(E_3)
\xrightarrow{\phi} \LHCi[3](E_3) \rightarrow \cdots
\]
until we finally reach $\LH^*_{\mc{C}_m'}(E_m)$, with $E_m=D_2$.  This allows us to make the choice of $\mc{C}_2:=\mc{C}_m'$, after which the verification of the claim reduces to the same inductive double-summation argument described in \cite[proof of Theorem 3.8]{MMSW}. \end{proof}

\begin{proof}[Proof of Theorem \ref{thm:BW}]
The proof is identical to the proof of \cite[Theorem 1.5]{MMSW}, itself a slight reformulation of the original argument in \cite{Rasmussen}.  In summary, the connectedness hypothesis ensures that the sum in Equation \eqref{eq:gen cob on Lee gens} contains precisely one term, so that in such cases the map $\phi_\Sigma$ is an isomorphism on Lee homology maintaining Lee generators, whose filtration levels define $s(L_i)$.
\end{proof}

\begin{proof}[Proof of Corollary~\ref{cor:s}]
Apply Theorem \ref{thm:BW} to cobordisms $\Sigma \subset I \times \RP^3$ between the knot $K$ and the unknot $U_\alpha$ of the same class $\alpha \in \{0, 1\}$ as $K$. Observing that $s(U_0)=s(U_1)=0$, we get $s(K) \leq 2g_4(K)$. Reversing the cobordism gives the inequality $-s(K) \leq 2g_4(K)$.
\end{proof}

\section{Properties and applications}
\label{sec:app}

\subsection{Positive links}
An oriented link $L \subset \RP^3$ is called {\em positive} if it admits a diagram $D \subset \RP^2$ with only positive crossings. Given such a diagram, we let $n$ be the number of its crossings, and let $r$ be the number of circles in its oriented resolution $D_o$.

\begin{proof}[Proof of Theorem~\ref{thm:positive}]
This is similar to Rasmussen's computation of the $s$-invariant for positive knots in $S^3$, done in \cite[Section 5.2]{Rasmussen}. In a positive diagram $D$, the oriented resolution $D_o$ is arrived at by choosing the $0$-resolution at each crossing. After choosing a dividing circle $\mc{C}$ for $D_o$, we see that the Lee generator $\LeegenC_o$ lives in the minimal quantum degree $n-r$ among all generators of the Khovanov complex. Therefore, $\LeegenC_o$ is not  homologous to any other class, so $s(K)-1= \s_{\min}(D)=\q \left(\left[ \LeegenC_o \right]\right) =  n-r.$
\end{proof}

\begin{corollary}
\label{cor:g4positive} 
The slice genus of a positive knot $K \subset \RP^3$ equals $(n-r+1)/2$.
\end{corollary}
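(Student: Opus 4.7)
The plan is to establish both inequalities $g_4(K) \ge (n-k+1)/2$ and $g_4(K) \le (n-k+1)/2$, forcing equality.

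For the lower bound, I would simply combine Theorem~\ref{thm:positive}, which gives $s(K) = n-k+1$, with Corollary~\ref{cor:s}, which says $|s(K)| \le 2 g_4(K)$. Since $n-k+1 \ge 0$ (e.g.\ by the corollary itself), this yields $g_4(K) \ge (n-k+1)/2$ immediately.

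For the upper bound, I would exhibit an explicit cobordism $\Sigma \subset I \times \RP^3$ from $K$ to the class-$\alpha$ unknot $U_\alpha$ with $\chi(\Sigma) = -(n-k+1)$. The construction mimics Seifert's algorithm: inside $(0, 1/2) \times \RP^3$, perform $n$ saddle moves, one at each positive crossing of $D$, taking the $0$-resolution (which, since all crossings are positive, coincides with the oriented resolution). At time $t = 1/2$ one sees the $k$-component link realized by the circles of $D_o$, now viewed in $\RP^3$. By Lemma~\ref{lem:ess circ in Dv iff L is class-1}, if $K$ is class-$0$ then all $k$ circles are homologically trivial in $\RP^3$, while if $K$ is class-$1$ then exactly one is essential (hence isotopic to $U_1$) and the remaining $k-1$ are trivial. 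In either case, cap off $k-1$ of the trivial circles with disjoint disks in $(1/2, 1] \times \RP^3$, leaving a single boundary circle at $t = 1$ which is $U_\alpha$.

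The resulting surface $\Sigma$ has Euler characteristic $-n + (k-1) = -(n-k+1)$. The one thing to check is that $\Sigma$ is connected, so that it is a genuine cobordism of genus $(n-k+1)/2$ between the two knots $K$ and $U_\alpha$; this follows because the Seifert graph of the connected knot diagram $D$ (vertices the resolution circles, one edge per crossing) is connected, so the pre-disk cobordism is a connected surface, and capping off disks preserves connectedness. This gives $g_4(K) \le (n-k+1)/2$, which together with the lower bound yields the desired equality. I do not anticipate a serious obstacle: the argument parallels Rasmussen's $S^3$ computation, the only new feature being that in the class-$1$ case the essential Seifert circle is automatically the projective unknot, so that the cobordism lands at the correct endpoint $U_\alpha$.
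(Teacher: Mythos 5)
Your proof is correct and takes essentially the same approach as the paper: the lower bound is the identical combination of Theorem~\ref{thm:positive} with Corollary~\ref{cor:s}, and the upper bound is the same Seifert's-algorithm construction of a genus-$(n-k+1)/2$ cobordism from $K$ to $U_\alpha$. The only cosmetic difference is that the paper first builds the Seifert-type surface ($k$ disks plus $n$ bands, with the essential circle left uncapped in the class-1 case) and then (in the class-0 case) tubes it to $\{1\}\times\RP^3$, whereas you describe the same surface directly as a movie of $n$ saddles followed by $k-1$ caps; both yield $\chi(\Sigma) = -(n-k+1)$.
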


\begin{proof}
Seifert's algorithm for finding Seifert surfaces for knots in $S^3$ can be applied to diagrams $D \subset \RP^2$ of knots in $\RP^3$. Let $D_{o}$ be the oriented resolution of $D$. With the exception of the essential circle (which exists when $K$ is class-1), the circles in $D_{o}$ bound oriented disks, which in $D$ are connected by oriented saddles. They can also be connected by oriented saddles to the essential circle.

When $K$ is class-0, we obtain an orientable surface in $\RP^3$ with boundary $K$ whose genus is $g=(n-r+1)/2$. By pushing this surface into $I \times \RP^3$ so that $K$ lives in $ \{0\} \times \RP^3$, and then connecting the surface via a tube to $\{1\} \times \RP^3$, we get a cobordism of genus $g$ from $K$ to the unknot $U_0$. When $K$ is class-1, we get instead a genus $g$ cobordism from $K$ to the essential circle in $\RP^2 \subset \RP^3$, which is the class-1 unknot $U_1$. In either case, we obtain the inequality $2g_4(K) \leq n-r+1$. Theorem~\ref{thm:positive} and Corollary~\ref{cor:s} give the opposite inequality, and the desired result follows.
\end{proof}

\begin{remark}
Corollary~\ref{cor:g4positive} can also be proved using Rasmussen's original results for the $s$-invariant in $S^3$, by applying them to the lift $\tK$ of $K$ in the double cover considered in the next section. 
\end{remark}

\begin{remark}
\label{rem:positive links}
The proof of Theorem~\ref{thm:positive} extends to positive links $L \subset \RP^3$, yielding the same formula $s(L)=n-r+1$. (Compare Proposition 5.2 and Remark 5.3 in \cite{Kawamura}.) 
\end{remark}

\subsection{Relation to freely 2-periodic knots}
A link $\tL \subset S^3$ is called {\em freely 2-periodic} if it is invariant under the involution $x \mapsto -x$ on $S^3 \subset \R^4$. The quotient of $\tL$  is a link $L  \subset \RP^3 = S^3/\sim$. Conversely, given a link $L \subset \RP^3$, we can consider its lift $\tL$ in $S^3$, which is freely 2-periodic. Note that a connected component $K \subset L$ lifts to a single component $\tK \subset \tL$ if $K$ is of class-1, and it lifts to a link $\tK \subset \tL$ of two components if $K$ is of class-0.

Starting with a diagram $D \subset \RP^2$ representing $L$, we can obtain diagrams of $\tL$ as follows. 
Let us view $D$ as the ``projective closure'' of an $(m, m)$-tangle $T \subset B^2$; that is, when we self-glue the boundary of $B^2$ in an antipodal fashion to obtain $\RP^2$, we identify the $m$ left endpoints of $T$ with its $m$ right endpoints, reversing their order. (Note that the parity of $m$ gives the class of $L$.)
Then, as explained in \cite[Theorem 2.3]{Manfredi}, we get a planar diagram for $\tL$ by taking the planar  closure of the tangle $T \circ \Delta \circ T \circ \Delta^{-1}$, where $\Delta$ is the half-twist on $m$ strands; see Figure~\ref{fig:tDprelim}.
\begin{figure}{
   \def\svgwidth{4.8in}
   %% Creator: Inkscape 1.0.2 (e86c8708, 2021-01-15), www.inkscape.org
%% PDF/EPS/PS + LaTeX output extension by Johan Engelen, 2010
%% Accompanies image file '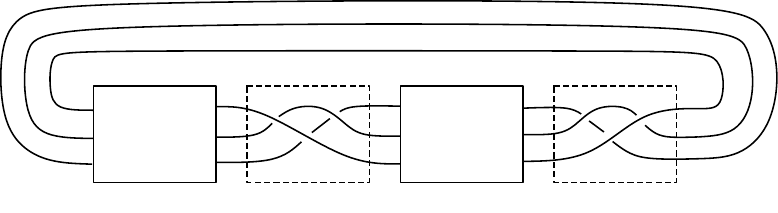' (pdf, eps, ps)
%%
%% To include the image in your LaTeX document, write
%%   \input{<filename>.pdf_tex}
%%  instead of
%%   \includegraphics{<filename>.pdf}
%% To scale the image, write
%%   \def\svgwidth{<desired width>}
%%   \input{<filename>.pdf_tex}
%%  instead of
%%   \includegraphics[width=<desired width>]{<filename>.pdf}
%%
%% Images with a different path to the parent latex file can
%% be accessed with the `import' package (which may need to be
%% installed) using
%%   \usepackage{import}
%% in the preamble, and then including the image with
%%   \import{<path to file>}{<filename>.pdf_tex}
%% Alternatively, one can specify
%%   \graphicspath{{<path to file>/}}
%% 
%% For more information, please see info/svg-inkscape on CTAN:
%%   http://tug.ctan.org/tex-archive/info/svg-inkscape
%%
\begingroup%
  \makeatletter%
  \providecommand\color[2][]{%
    \errmessage{(Inkscape) Color is used for the text in Inkscape, but the package 'color.sty' is not loaded}%
    \renewcommand\color[2][]{}%
  }%
  \providecommand\transparent[1]{%
    \errmessage{(Inkscape) Transparency is used (non-zero) for the text in Inkscape, but the package 'transparent.sty' is not loaded}%
    \renewcommand\transparent[1]{}%
  }%
  \providecommand\rotatebox[2]{#2}%
  \newcommand*\fsize{\dimexpr\f@size pt\relax}%
  \newcommand*\lineheight[1]{\fontsize{\fsize}{#1\fsize}\selectfont}%
  \ifx\svgwidth\undefined%
    \setlength{\unitlength}{373.27363046bp}%
    \ifx\svgscale\undefined%
      \relax%
    \else%
      \setlength{\unitlength}{\unitlength * \real{\svgscale}}%
    \fi%
  \else%
    \setlength{\unitlength}{\svgwidth}%
  \fi%
  \global\let\svgwidth\undefined%
  \global\let\svgscale\undefined%
  \makeatother%
  \begin{picture}(1,0.27342758)%
    \lineheight{1}%
    \setlength\tabcolsep{0pt}%
    \put(0,0){\includegraphics[width=\unitlength,page=1]{tDprelim.pdf}}%
    \put(0.18325918,0.09553507){\makebox(0,0)[lt]{\lineheight{1.25}\smash{\begin{tabular}[t]{l}$T$\end{tabular}}}}%
    \put(0.5815504,0.09553509){\makebox(0,0)[lt]{\lineheight{1.25}\smash{\begin{tabular}[t]{l}$T$\end{tabular}}}}%
    \put(0.37569202,0.00564061){\makebox(0,0)[lt]{\lineheight{1.25}\smash{\begin{tabular}[t]{l}$\Delta$\end{tabular}}}}%
    \put(0.7695081,0.00564056){\makebox(0,0)[lt]{\lineheight{1.25}\smash{\begin{tabular}[t]{l}$\Delta^{-1}$\end{tabular}}}}%
  \end{picture}%
\endgroup%

}
\caption{A planar diagram for a freely 2-periodic link $\tL$.}
\label{fig:tDprelim}
\end{figure}

From here, we can get an even simpler planar diagram for $\tL$. Let $\flip(T)$ denote the result of rotating $T$ by $180^\circ$ about its middle horizontal axis; equivalently, we can reflect $T$ in its middle horizontal axis and then reverse all the crossings. (For example, when $T$ is a braid, $\flip(T)$ is obtained by applying the involution $\sigma_i \mapsto \sigma_{m-1-i}$ on the generators of the braid group $B_m$.) The simpler diagram for $\tL$ is the closure of the composition $T \circ \flip(T)$, as shown in Figure~\ref{fig:tD}. We denote this diagram by $\tD$.
\begin{figure}{
   \def\svgwidth{2.87in}
   %% Creator: Inkscape 1.0.2 (e86c8708, 2021-01-15), www.inkscape.org
%% PDF/EPS/PS + LaTeX output extension by Johan Engelen, 2010
%% Accompanies image file '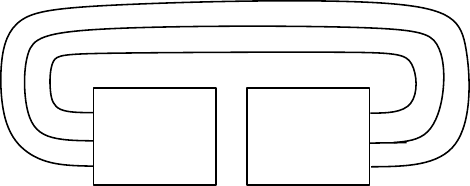' (pdf, eps, ps)
%%
%% To include the image in your LaTeX document, write
%%   \input{<filename>.pdf_tex}
%%  instead of
%%   \includegraphics{<filename>.pdf}
%% To scale the image, write
%%   \def\svgwidth{<desired width>}
%%   \input{<filename>.pdf_tex}
%%  instead of
%%   \includegraphics[width=<desired width>]{<filename>.pdf}
%%
%% Images with a different path to the parent latex file can
%% be accessed with the `import' package (which may need to be
%% installed) using
%%   \usepackage{import}
%% in the preamble, and then including the image with
%%   \import{<path to file>}{<filename>.pdf_tex}
%% Alternatively, one can specify
%%   \graphicspath{{<path to file>/}}
%% 
%% For more information, please see info/svg-inkscape on CTAN:
%%   http://tug.ctan.org/tex-archive/info/svg-inkscape
%%
\begingroup%
  \makeatletter%
  \providecommand\color[2][]{%
    \errmessage{(Inkscape) Color is used for the text in Inkscape, but the package 'color.sty' is not loaded}%
    \renewcommand\color[2][]{}%
  }%
  \providecommand\transparent[1]{%
    \errmessage{(Inkscape) Transparency is used (non-zero) for the text in Inkscape, but the package 'transparent.sty' is not loaded}%
    \renewcommand\transparent[1]{}%
  }%
  \providecommand\rotatebox[2]{#2}%
  \newcommand*\fsize{\dimexpr\f@size pt\relax}%
  \newcommand*\lineheight[1]{\fontsize{\fsize}{#1\fsize}\selectfont}%
  \ifx\svgwidth\undefined%
    \setlength{\unitlength}{225.53810438bp}%
    \ifx\svgscale\undefined%
      \relax%
    \else%
      \setlength{\unitlength}{\unitlength * \real{\svgscale}}%
    \fi%
  \else%
    \setlength{\unitlength}{\svgwidth}%
  \fi%
  \global\let\svgwidth\undefined%
  \global\let\svgscale\undefined%
  \makeatother%
  \begin{picture}(1,0.3950674)%
    \lineheight{1}%
    \setlength\tabcolsep{0pt}%
    \put(0,0){\includegraphics[width=\unitlength,page=1]{tD.pdf}}%
    \put(0.3033005,0.09555992){\makebox(0,0)[lt]{\lineheight{1.25}\smash{\begin{tabular}[t]{l}$T$\end{tabular}}}}%
    \put(0.58228192,0.09062219){\makebox(0,0)[lt]{\lineheight{1.25}\smash{\begin{tabular}[t]{l}$\flip(T)$\end{tabular}}}}%
    \put(0,0){\includegraphics[width=\unitlength,page=2]{tD.pdf}}%
  \end{picture}%
\endgroup%

}
\caption{The simpler planar diagram $\tD$ for $\tL$.}
\label{fig:tD}
\end{figure}

\begin{lemma}
\label{lem:poslift}
If $L \subset \RP^3$ is a positive link, then its lift $\tL \subset S^3$ is also a positive link.
\end{lemma}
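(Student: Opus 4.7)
I will show that the planar diagram $\tD$ for $\tL$, obtained as the planar closure of $T \circ \flip(T)$, is itself a positive diagram; this immediately implies $\tL$ is positive. The crossings of $\tD$ split into two groups of $n$ each: those lying in the sub-tangle $T$, and those lying in $\flip(T)$. The crossings inside $T$ are literal copies of the crossings in $D$, and since the orientation on $T \subset \tD$ coincides with the one inherited from $L$, they are positive by hypothesis.

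The real work is to handle the crossings inside $\flip(T)$, for which I will use two ingredients. First, I identify the orientation induced on $\flip(T) \subset \tD$ as the $\flip$-image of the orientation on $T$. This holds because the covering involution $\sigma: S^3 \to S^3$---the antipodal map---is orientation-preserving (as $3$ is odd) and preserves $\tL$ together with its orientation (which is the lift of the orientation on $L$). On the diagram side, $\sigma$ descends to the interchange of the two sub-tangles $T$ and $\flip(T)$ via the $\flip$ map, and therefore the orientation on $\flip(T)$ in $\tD$ must be $\flip$-related to that on $T$. I expect this orientation bookkeeping to be the main subtle step, since it requires carefully tracking how the covering involution manifests on the constructed diagram.

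The second ingredient is a direct sign computation: $\flip$ preserves the sign of every crossing. Using the determinantal formula $\sign(\det(u,o))$ for the sign of a crossing with under- and over-tangent vectors $u, o$, one checks that the two operations comprising $\flip$ each individually negate the sign---the reflection $R: (x,y) \mapsto (x,-y)$ in the horizontal axis contributes a factor $\det(R) = -1$, while swapping over- and under-strands interchanges the arguments of the determinant---so their composition preserves signs. Combined with the orientation identification, this yields that every crossing in $\flip(T)$ is positive, and hence $\tD$ is a positive diagram for $\tL$.
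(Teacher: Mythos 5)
Your approach is essentially the paper's (the paper states, without elaboration, ``If $D$ is a positive diagram for $L$, then $\tD$ is a positive diagram for $\tL$''), and you have supplied the details correctly: the crossings of $\tD$ partition into those of $T$ and those of $\flip(T)$; the former are positive by hypothesis; for the latter, the $\flip$ operation (reflection in the horizontal axis composed with reversing all crossings) preserves crossing signs because each of the two constituent operations reverses the sign of $\det(\vec{u},\vec{o})$, so their composite preserves it. This is a valid and complete argument.

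One remark on the orientation step, which you rightly flag as the subtle point. Your justification via the covering involution $\sigma$ is morally correct but slightly loose as stated: the simplified diagram $\tD$ is obtained from the natural lifted diagram (closure of $T\circ\Delta\circ T\circ\Delta^{-1}$) by an isotopy, and $\sigma$ does not act on the plane of $\tD$ as a single affine ``flip-interchange'' of the two boxes (on the equatorial $S^2$ it is the antipodal map combined with reversing the $I$-direction, which becomes a M\"obius-type transformation after stereographic projection). The statement you actually need --- that the orientation inherited on $\flip(T)\subset\tD$ from $\tL$ is the $\flip$-image of the orientation on $T$ --- is most cleanly verified directly: at each endpoint pair that is glued, the incoming/outgoing data is forced by the projective-closure consistency $\epsilon_i=-\epsilon'_{m+1-i}$ on $T$, and $\flip$ (which does not affect the horizontal component of tangent vectors at endpoints) translates this into exactly the consistency required at the middle seam and at the top closure arcs of $\tD$. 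That local check removes the need to pin down the exact planar action of $\sigma$. With that small clarification, your proof agrees with the paper's.
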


\begin{proof}
If $D$ is a positive diagram for $L$, then $\tD$ is a positive diagram for $\tL$.
\end{proof}

\begin{proposition}
\label{prop:abc}
Let $L \subset \RP^3$ be a link of class $\alpha \in \{0,1\}$. If $L$ satisfies any of the following three conditions:
\begin{enumerate}[(a)]
\item $L$ is local,
\item $L$ is of the form $U_1 \# L_\ell$ for some $L_\ell \subset S^3$, or
\item $L$ is positive,
\end{enumerate}
then we have
\begin{equation}
\label{eq:stl}
 s(\tL)= 2s(L) + \alpha - 1.
 \end{equation}
\end{proposition}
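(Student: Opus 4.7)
The plan is to verify \eqref{eq:stl} in each of the three cases separately, using a concrete description of the lift $\tL = p^{-1}(L)$ under the double cover $p\colon S^3 \to \RP^3$ together with the tools already at our disposal.

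For case (a), a local link $L \subset B^3 \subset \RP^3$ is class-$0$, and $p^{-1}(B^3)$ is a disjoint pair of balls in $S^3$, so $\tL = L \sqcup L$. Combining the disjoint-union formula from Proposition~\ref{prop:disj unions and conn sums} with Theorem~\ref{thm:local link is same as S3} to equate the $S^3$ and $\RP^3$ values of $s$ on local links, one obtains $s(\tL) = 2s(L) - 1$, which matches \eqref{eq:stl} since $\alpha = 0$. For case (b), $\alpha = 1$, and the lift of $U_1 = \RP^1$ is the unknot $U \subset S^3$, whose preimage of the single connected-sum point is a pair of antipodal points. Splicing in a copy of $L_\ell$ at each of these preimages produces $\tL = L_\ell \# L_\ell$. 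Using $s(U_1) = 0$ from Corollary~\ref{cor:s} and the connected-sum formula from Proposition~\ref{prop:disj unions and conn sums}, we have $s(L) = s(L_\ell)$ and $s(\tL) = 2s(L_\ell) = 2s(L) = 2s(L) + \alpha - 1$.

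For case (c), the lift $\tL$ is itself positive by Lemma~\ref{lem:poslift}, realized by the planar closure $\tD$ of $T \circ \flip(T)$. Theorem~\ref{thm:positive} together with Remark~\ref{rem:positive links} yields $s(L) = n - k + 1$ and $s(\tL) = \tilde n - \tilde k + 1$, so everything reduces to computing the two counts. The crossing count $\tilde n = 2n$ is immediate because every crossing of $D \subset \RP^2$ has exactly two preimages in $S^2$. For $\tilde k$, the oriented resolution $\tD_o$ is the lift of $D_o$ under $S^2 \to \RP^2$: each trivial (null-homotopic) circle in $D_o$ lifts to two disjoint circles in $S^2$, whereas the unique essential circle, which appears exactly when $L$ is class-$1$, lifts to a single circle that double-covers it. Hence $\tilde k = 2k - \alpha$, and
\[ s(\tL) = 2n - (2k - \alpha) + 1 = 2(n - k + 1) + \alpha - 1 = 2s(L) + \alpha - 1. \]

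The step requiring the most care is the identification of $\tL$ as an explicit link in $S^3$ in each case, and in particular, for case (c), confirming that the planar closure of $T_o \circ \flip(T_o)$ indeed agrees with the lift of the $\RP^2$-closure $D_o$ under the double cover $S^2 \to \RP^2$, so that the circle-counting reduces to the local topology of $p$. Once this identification is in place, all remaining steps are direct applications of the formulas already established.
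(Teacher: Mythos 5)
Your argument is correct, and parts (a) and (b) follow the same route as the paper, appealing to Theorem~\ref{thm:local link is same as S3} and Proposition~\ref{prop:disj unions and conn sums} after identifying $\tL$ as $L\sqcup L$ and $L_\ell\# L_\ell$ respectively. (A small stylistic quibble: you cite Corollary~\ref{cor:s} for $s(U_1)=0$, but that corollary is itself stated using $s(U_0)=s(U_1)=0$ as an input; the value $s(U_1)=0$ is really an elementary direct computation from the crossingless diagram of $U_1$.) For part (c), you take a genuinely more direct path: rather than writing $s(L)=s(D_o)+n$, $s(\tL)=s(\tD_o)+2n$ and then reducing the claim on $D_o,\tD_o$ to cases (a) and (b), you compute $\tilde n=2n$ and $\tilde k=2k-\alpha$ directly from the covering $S^2\to\RP^2$ (trivial circles lift to pairs, the essential circle is double-covered by a single circle) and substitute into Theorem~\ref{thm:positive}. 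The content is equivalent -- the identity $\tilde k=2k-\alpha$ is exactly what makes the paper's appeal to (a) and (b) succeed -- but your version is more explicit and avoids the intermediate step. You correctly flag the one point that needs care, namely identifying the oriented resolution of the planar diagram $\tD=$ closure of $T\circ\flip(T)$ with the lift of $D_o$ to $S^2$; since lifting orientations commutes with taking oriented resolutions at each preimage crossing, this is fine, and the circle count then follows from covering-space topology.
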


\begin{proof}
(a) When $L$ is local, its lift $\tL$ is the split disjoint union $L \sqcup L$ and we have $\alpha=0$. The relation $s(L \sqcup L)=2s(L)-1$ follows from Propositions~\ref{prop:disj unions and conn sums} (a) and \ref{prop:local links}.

(b) In this case $\alpha=1$ and $\tL = L_{\ell} \# L_{\ell}$. We get $s(L_{\ell} \# L_{\ell})=2s(U_1 \# L_{\ell})$ from Propositions~\ref{prop:disj unions and conn sums} (b), \ref{prop:local links}, and the fact that $s(U_1) = 0.$

(c) When $L$ is positive, so is $\tL$ by Lemma~\ref{lem:poslift}. Let $D$ be a positive diagram for $L$ with $n$ crossings, and with the oriented resolution $D_o$ having $k$ circles. Then $s(L) = n-r+1 = s(D_o) + n$ by Theorem~\ref{thm:positive} and Remark~\ref{rem:positive links}. For the positive diagram $\tD$ upstairs with oriented resolution $\tD_o$, we similarly have $s(\tL) = s(\tD_o) + 2n.$ Thus, it suffices to check that $s(\tD_o) = 2s(D_o) + \alpha - 1.$ Observe that $D_o$ is of the same class $\alpha$ as $L$, and it is either a local link (an unlink) or the connected sum of $U_1$ and an unlink. Using parts (a) and (b), we get the desired equality.
\end{proof}

\begin{remark}
The equality \eqref{eq:stl} fails for more general links $L \subset \RP^3$. For example, let $L$ be the projective closure of the negative full-twist on two strands:
\begin{center}
{
   \def\svgwidth{.6in}
   %% Creator: Inkscape 1.0.2 (e86c8708, 2021-01-15), www.inkscape.org
%% PDF/EPS/PS + LaTeX output extension by Johan Engelen, 2010
%% Accompanies image file '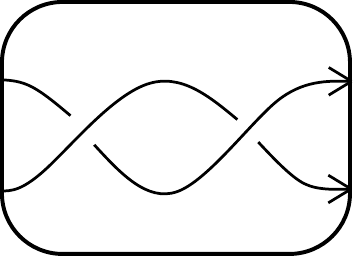' (pdf, eps, ps)
%%
%% To include the image in your LaTeX document, write
%%   \input{<filename>.pdf_tex}
%%  instead of
%%   \includegraphics{<filename>.pdf}
%% To scale the image, write
%%   \def\svgwidth{<desired width>}
%%   \input{<filename>.pdf_tex}
%%  instead of
%%   \includegraphics[width=<desired width>]{<filename>.pdf}
%%
%% Images with a different path to the parent latex file can
%% be accessed with the `import' package (which may need to be
%% installed) using
%%   \usepackage{import}
%% in the preamble, and then including the image with
%%   \import{<path to file>}{<filename>.pdf_tex}
%% Alternatively, one can specify
%%   \graphicspath{{<path to file>/}}
%% 
%% For more information, please see info/svg-inkscape on CTAN:
%%   http://tug.ctan.org/tex-archive/info/svg-inkscape
%%
\begingroup%
  \makeatletter%
  \providecommand\color[2][]{%
    \errmessage{(Inkscape) Color is used for the text in Inkscape, but the package 'color.sty' is not loaded}%
    \renewcommand\color[2][]{}%
  }%
  \providecommand\transparent[1]{%
    \errmessage{(Inkscape) Transparency is used (non-zero) for the text in Inkscape, but the package 'transparent.sty' is not loaded}%
    \renewcommand\transparent[1]{}%
  }%
  \providecommand\rotatebox[2]{#2}%
  \newcommand*\fsize{\dimexpr\f@size pt\relax}%
  \newcommand*\lineheight[1]{\fontsize{\fsize}{#1\fsize}\selectfont}%
  \ifx\svgwidth\undefined%
    \setlength{\unitlength}{169.07241387bp}%
    \ifx\svgscale\undefined%
      \relax%
    \else%
      \setlength{\unitlength}{\unitlength * \real{\svgscale}}%
    \fi%
  \else%
    \setlength{\unitlength}{\svgwidth}%
  \fi%
  \global\let\svgwidth\undefined%
  \global\let\svgscale\undefined%
  \makeatother%
  \begin{picture}(1,0.72654819)%
    \lineheight{1}%
    \setlength\tabcolsep{0pt}%
    \put(0,0){\includegraphics[width=\unitlength,page=1]{negtwist.pdf}}%
  \end{picture}%
\endgroup%

}
\end{center}
Then, one can check that $s(L)=s(\tL)=-1$, so $s(\tL) \neq 2s(L) -1.$
\end{remark}

It is harder to find examples of class-1 knots $K \subset \RP^3$ for which $s(\tK) \neq 2s(K)$. In the introduction we gave two such examples, $K_1$ and $K_2$, for which we have
$$s(K_1)=0, \ \ s(\tK_1)=2; \ \ \ \ s(K_2)=2, \ \ s(\tK_2)=2.$$
 To find the knots $K_1$ and $K_2$, we searched through the projective closures of braids on 5 strands and at most 8 crossings. (A similar search through braids on 3 strands did not give any interesting examples.) To compute $s(\tK)$ we used the Mathematica program {\em KnotTheory'}  \cite{Mathematica, KnotTheory}. To compute $s(K)$ we used the older {\tt Categorification.m} program from \cite{Categorification}. This has the advantage that it does not essentially use the fact that it is meant for knots in $S^3$; one can apply it just as well to knots in $\RP^3$, by plugging in the PD code from a diagram in $\RP^2$ (numbering its edges and listing its crossings as if it were a planar diagram). While {\tt Categorification.m} is much slower than the newer {\tt UniversalKh} package from {\em KnotTheory'}, it can still easily compute the $s$-invariants of our projective diagrams with up to $8$ crossings. 
 
We identified the lifts $\tK_1$ and $\tK_2$ to be the knots $12n403$ and $14n14256$ using {\em SnapPy} \cite{SnapPy}. Furthermore, to compute their slice genera and standardly equivariant slice genera, we proceeded as follows. 

The signature of $\tK_1$ can be computed to be $4$, and therefore $2=\sigma(\tK_1)/2 \leq g_4(\tK_1)$. On the other hand, it is not hard to see that changing the circled crossing in the projective diagram for $K_1$ from Figure~\ref{fig:crossings} (a) results in the unknot $U_1$. This produces a genus $1$ cobordism in $\RP^3$ between $K_1$ and $U_1$, implying that $g_4(K_1) \leq 1.$ Hence
$$ 2 \leq g_4(\tK_1) \leq \gequiv(\tK_1) = 2 g_4(K_1) \leq 2,$$
so all these inequalities are equalities.

\begin{figure}
{
(a) \ \  \def\svgwidth{1.5in}
   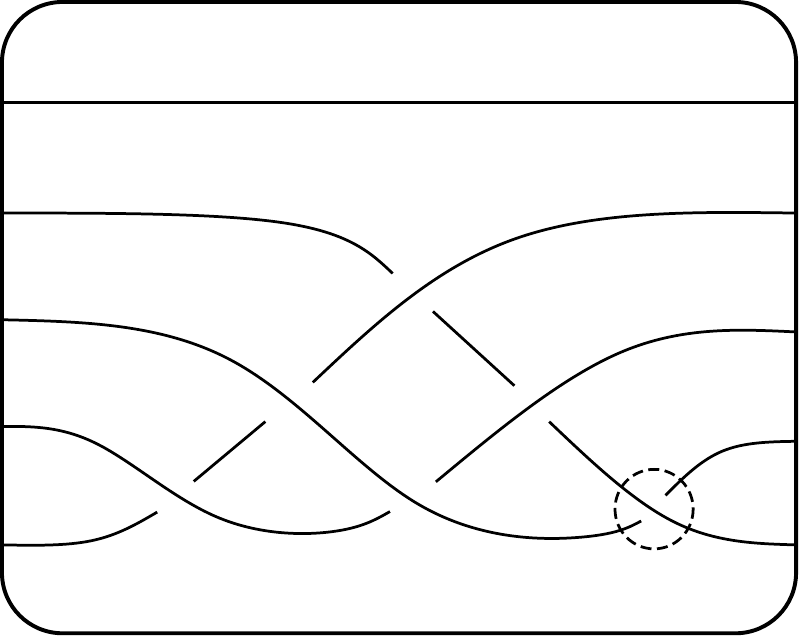
  \ \ \ \ \ \ \  \ \ (b) \ \  \def\svgwidth{1.5in}
  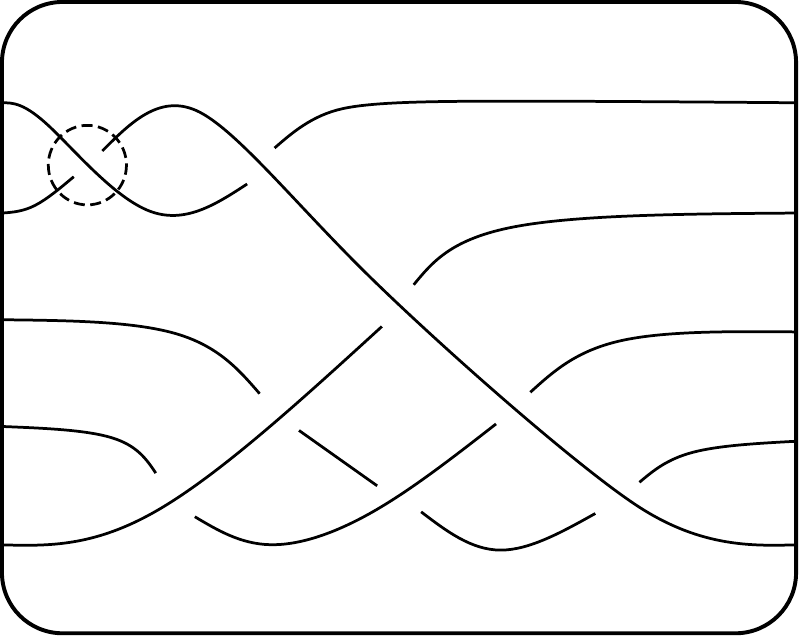
}
\caption{Crossing changes that take $K_1$ and $K_2$ into the class-1 unknot $\tU_1$.}
\label{fig:crossings}
\end{figure}

For $K_2$, changing the circled crossing in Figure~\ref{fig:crossings} (b) yields the unknot $U_1$, so $g_4(K_2) \leq 1$. Interestingly, if we change the same crossing in the diagram $\tD$ for $\tK_2$ shown in Figure~\ref{fig:tK2crossing}, we get the unknot in $S^3$, which implies that $g_4(\tK_2) \leq 1.$ The inequalities $1=s(K_2)/2 \leq g_4(K_2)$ and $1=s(\tK_2)/2 \leq g_4(\tK_2)$ imply that $g_4(K_2) = \gequiv(\tK_2)/2 = 1$ and $g_4(\tK_2) = 1.$
\begin{figure}
{
   \def\svgwidth{5in}
   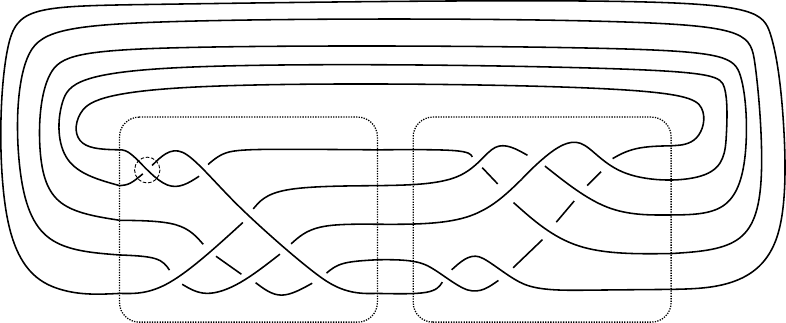
}
\caption{A crossing change on $\tK_2$ producing the unknot.}
\label{fig:tK2crossing}
\end{figure}

In the above arguments we used the following result, which was alluded to in the introduction. For completeness, we present a proof.

\begin{lemma}
Let $K \subset \RP^3$ be a class-1 knot, and $\tK$ its lift to $S^3$. Then, the standardly equivariant genus of $\tK$ is equal to twice the slice genus of $K$: $\gequiv(\tK) = 2g_4(K)$.  
\end{lemma}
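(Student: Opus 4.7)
The plan is to establish both inequalities $\gequiv(\tK) \leq 2 g_4(K)$ and $\gequiv(\tK) \geq 2 g_4(K)$.

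For $\gequiv(\tK) \leq 2 g_4(K)$, I would start with a genus-$g$ cobordism $\Sigma \subset I \times \RP^3$ from $K$ to $U_1$, with $g = g_4(K)$. Since both $K$ and $U_1$ are class-$1$, the loop $K \subset \Sigma$ represents the nontrivial element of $\pi_1(I \times \RP^3) = \Z/2$, so the lift of $\Sigma$ under the double cover $I \times S^3 \to I \times \RP^3$ is a connected standardly equivariant cobordism from $\tK$ to the lift of $U_1$. A Euler characteristic count ($\chi = -4g$ with two boundary components) gives genus $2g$. Now $U_1 = \RP^1 \subset \RP^3$ lifts to a great circle in $S^3 = \partial B^4$, which bounds the flat disk $\{(x_1, x_2, 0, 0) : x_1^2 + x_2^2 \leq 1\} \subset B^4$; this disk is invariant under $x \mapsto -x$ because it is a linear subspace through the origin. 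Capping off the lift with this disk yields a standardly equivariant genus-$2g$ surface in $B^4$ bounded by $\tK$.

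For the reverse inequality, I would start with a standardly equivariant slice surface $\tSigma \subset B^4$ for $\tK$ of minimal genus $g_e = \gequiv(\tK)$. The key observation is that $\tSigma$ must pass through the origin. Indeed, the involution $x \mapsto -x$ on $B^4$ preserves orientation (its determinant on $\R^4$ is $(-1)^4 = 1$), so its restriction to $\tSigma$ is an orientation-preserving involution whose fixed set is therefore $0$-dimensional. If the action on $\tSigma$ were free, then $\chi(\tSigma)$ would be divisible by $2$; but $\chi(\tSigma) = 1 - 2g_e$ is odd, a contradiction. Since the only fixed point of the involution on $B^4$ is the origin, and $\tSigma$ is smoothly embedded, $\tSigma$ contains the origin with a well-defined tangent plane $T_0 \tSigma$, and the origin is the unique fixed point of the involution on $\tSigma$.

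To construct the desired cobordism, excise a small invariant ball $B^4_\epsilon$ around the origin. Then $B^4 \setminus B^4_\epsilon \cong I \times S^3$ equivariantly, and $\tSigma \cap (B^4 \setminus B^4_\epsilon)$ is a \emph{free} equivariant surface with boundary $\tK \cup C$, where $C := \tSigma \cap \partial B^4_\epsilon$. After a small equivariant isotopy supported near the origin, we may assume that $\tSigma$ coincides with the flat disk $T_0\tSigma \cap \bar B^4_\epsilon$ inside $\bar B^4_\epsilon$, so that $C$ is a great circle in $\partial B^4_\epsilon \cong S^3$. Passing to the quotient yields a cobordism $\Sigma \subset I \times \RP^3$ from $K$ to $C/\sim$, and $C/\sim$ is (isotopic to) $U_1 = \RP^1$ because great circles quotient to $\RP^1$'s in $\RP^3$. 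An Euler characteristic count (removing a disk from $\tSigma$ gives $\chi = -2g_e$; the free quotient halves this to $\chi(\Sigma) = -g_e$; with two boundary components, $g(\Sigma) = g_e/2$) then yields $g_4(K) \leq g_e/2$. The main technical point is the local equivariant straightening of $\tSigma$ near the fixed point, which is enabled by the linearity of the involution on $T_0 \tSigma \cong \R^2$ and standard equivariant tubular neighborhood theorems; the remaining steps are a bookkeeping exercise.
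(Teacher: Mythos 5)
Your proof is correct and follows the same overall structure as the paper's: lift the minimal cobordism and cap it with the equivariant flat disk for one inequality; show $0 \in \tSigma$, excise an invariant ball, straighten near the fixed point, and descend to $I \times \RP^3$ for the other. The one genuine difference is the argument that $\tSigma$ must pass through the origin. The paper argues homologically: if $\tSigma$ missed $0$, it would descend to a compact surface in $I \times \RP^3$ with boundary exactly $K$, forcing $[K]=0 \in H_1(\RP^3;\Z)$ and contradicting the class-$1$ hypothesis. You instead argue by Euler characteristic parity: if $\iota|_{\tSigma}$ were free, then $\chi(\tSigma)=1-2\gequiv(\tK)$ would be even, a contradiction since it is odd. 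Both are clean one-line arguments and neither is obviously preferable. One small caveat in your write-up: the claim that $\iota|_{\tSigma}$ is orientation-preserving does not follow immediately from $\iota$ being orientation-preserving on $B^4$ (one would also need to control the action on the normal bundle), and the observation that the fixed set of $\iota|_{\tSigma}$ is $0$-dimensional already follows trivially from $\operatorname{Fix}(\iota)=\{0\}\subset B^4$; fortunately your $\chi$-parity step uses only freeness, not orientation-preservation, so this does not affect the correctness of the argument.
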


\begin{proof}
Given an oriented cobordism $W \subset I \times \RP^3$ of genus $g_4(K)$ from $K$ to $U_1$, its lift to the double cover gives a standardly equivariant cobordism $\tW \subset I \times S^3$ from $\tK$ to the unknot $U=S^1 \subset S^3$; we can fill this with the standard equivariant disk $B^2 \subset B^4$ to get an equivariant surface $\tSigma \subset B^4$ with boundary $\tK$. An Euler characteristic computation shows that $\tSigma$ has genus $2g_4(K)$. Thus, $\gequiv(\tK) \leq 2g_4(K)$.

Conversely, consider a (standardly) equivariant surface $\tSigma \subset B^4$ with boundary $\tK$, of genus $\gequiv(\tK)$. First, note that $\tSigma$ must contain the origin $0$, because otherwise it would double cover a surface $\Sigma \subset I\times \RP^3$ with boundary $K$, which would contradict the fact that $[K] \neq 0 \in H_1(\RP^3; \Z)$. Near the origin $0$, the surface $\Sigma$ must look locally like its tangent space $T\Sigma$ with a nontrivial $\Z/2$-action, which means that we can excise a ball around $0 \in B^4$ to get a standardly equivariant cobordism $\tW\subset I \times S^3$ from $\tK$ to the equivariant unknot $\tU_1$. This is the double cover of some cobordism $W$ from $K$ to $U_1$, whose genus must be $\gequiv(\tK)/2$. This implies that $g_4(K) \leq \gequiv(\tK)/2$.
\end{proof}

Finally, we prove the application about standardly equivariant concordance.

\begin{proof}[Proof of Theorem~\ref{thm:eqconc}]
The knots $\tK$ and $\tK' = (-\tK) \# \tK \# \tK$ are clearly concordant, because $(-\tK) \# \tK$ is slice. On the other hand, the existence of a standardly equivariant concordance between $\tK$ and $\tK'$ is equivalent to that of a concordance (an annular cobordism)  in $I \times \RP^3$ from $K$ to $K'=(-K) \# \tK$. The latter would imply that  $s(K)=s(K')$, using Theorem~\ref{thm:BW}. However, we chose our knot $K$ so that $s(\tK) \neq 2s(K)$ and hence $$s(K') = s(-K) + s(\tK) = -s(K) + s(\tK) \neq s(K).$$
Here, we made use of Theorem~\ref{thm:local} and Proposition~\ref{prop:mirrors}.
\end{proof}

\subsection{Open problems}
In \cite[Question 1]{BM}, Boyle and Musyt ask if there are freely periodic slice knots that are not equivariantly slice. A weaker version of their question is:
\begin{question}
\label{q:slice}
Does there exist a freely 2-periodic slice knot  $\tK \subset S^3$ that is not standardly equivariantly slice, i.e., such that the corresponding class-1 knot $K \subset \RP^3$ is not concordant to $U_1$?
\end{question}

Theorem~\ref{thm:eqconc} provides examples of a similar flavor (with concordance instead of sliceness), but does not quite answer Question~\ref{q:slice}. One may be tempted to consider the direct sum $(-\tK) \# \tK'$ (with $\tK$ and $\tK'$ as in Theorem~\ref{thm:eqconc}), but in general the connected sum of two freely 2-periodic knots is not freely 2-periodic.  

Nevertheless, in principle the $s$-invariant constructed in this paper could be useful in answering Question~\ref{q:slice}: one needs to find a class-1 knot $K \subset \RP^3$ such that $s(K) \neq 0$ but $\tK$ is slice.

Question~\ref{q:slice}, along with the results in this paper, provides an impetus for a further study of concordance of knots in $\RP^3$. Apart from $s$, there are for example classical concordance invariants such as the $d$-signatures for $d$ odd \cite{Gilmer}, as well as invariants from knot Floer homology, such as Raoux's $\tau_{\mathfrak{s}}$ invariants \cite{Raoux, HR}. It would be interesting to see if one could recover results such as Theorem~\ref{thm:eqconc} using $\tau_{\mathfrak{s}}$.

In a different direction, while Theorem~\ref{thm:BW} gives genus bounds for surfaces in $I \times \RP^3$, one may wonder about the genus of surfaces in 4-manifolds with a single boundary $\RP^3$. A natural candidate is $DTS^2$, the disk bundle associated to the tangent bundle to $S^2$. We conjecture the following:
\begin{conjecture}
\label{conj:DT}
Let $\Sigma \subset DTS^2$ be an oriented, smoothly and properly embedded surface of genus $g$ with boundary $K \subset \del(DTS^2) = \RP^3$. Suppose that $\Sigma$ is null-homologous, i.e., its relative homology class in $H_2(DTS^2, \RP^3; \Z) \cong \Z$ is zero. Then:
$$ -s(K) \leq 2g.$$
\end{conjecture}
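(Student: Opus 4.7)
The plan is to pass to the double cover $\widetilde{DTS^2}\to DTS^2$ branched along the zero section $S^2$, reducing Conjecture~\ref{conj:DT} to a Rasmussen-type bound in the cover and then bridging $s_{S^3}(\tK)$ back to $s_{\RP^3}(K)$ via a chain-level transfer at the level of Lee complexes.

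First I would verify that the null-homology hypothesis forces $K$ to be class-0: the long exact sequence $H_2(DTS^2)\to H_2(DTS^2,\RP^3)\to H_1(\RP^3)=\Z/2\to 0$ sends $[\Sigma]\mapsto [K]$, so $[\Sigma]=0$ implies $[K]=0$ and the lift $\tK\subset S^3$ under the antipodal double cover $S^3\to\RP^3$ is a two-component link.  Next, I would perturb $\Sigma$ to meet $S^2$ transversely; since $[\Sigma]\cdot[S^2]=0$, the number $m$ of intersection points is even.

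The double branched cover $\widetilde{DTS^2}$ is the disk bundle of Euler number one over $S^2$, i.e.\ $\CP\setminus B^4$, with $\partial\widetilde{DTS^2}\to\partial DTS^2$ being the standard antipodal cover $S^3\to\RP^3$.  Lifting $\Sigma$ produces a properly embedded surface $\td\Sigma\subset\CP\setminus B^4$ with boundary $\tK$, satisfying $\chi(\td\Sigma)=2\chi(\Sigma)-m$ by Riemann--Hurwitz, null-homologous in $H_2(\CP\setminus B^4,S^3)$, and meeting the lifted zero section $\mathbb{CP}^1$ transversely in the same $m$ points.  Tubing off these intersection pairs at a cost of $m/2$ additional genus places $\td\Sigma$ inside $\CP\setminus\mathbb{CP}^1\cong\mathbb{C}^2$, and hence inside a large round $B^4$, where Rasmussen's original link bound applied to $\tK\subset S^3$ gives an upper estimate of $-s_{S^3}(\tK)$ linear in $g$ and $m$.

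The main obstacle, and the crux of the plan, is the final descent from $s_{S^3}(\tK)$ to $s_{\RP^3}(K)$.  Proposition~\ref{prop:abc}(a) yields the clean relation $s(\tK)=2s(K)-1$ only for local class-0 links, and Examples~\ref{ex:K1} and \ref{ex:K2}, together with the discussion in Section~\ref{sec:mirror and conn sum}, show that no universal equality between $s(K)$ and $s(\tK)$ can hold in general.  I would therefore construct a filtered chain-level transfer homomorphism $\LCC(D)\to LC^*_{S^3}(\tD)$ compatible with the Lee generators $\LeegenC_o$ on both sides, using the explicit presentation of $\tD$ as the planar closure of $T\circ\flip(T)$ from the start of Section~\ref{sec:app}.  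Computing the precise filtration shift of this transfer, and showing that it combines with the Riemann--Hurwitz penalty $m$ so as to deliver the sharp bound $-s(K)\leq 2g$ rather than the weaker $-s(K)\leq 2g+O(m)$ obtained from a naive combination, is the hardest step and where I expect nearly all of the work to lie.
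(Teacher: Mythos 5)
The paper does not prove Conjecture~\ref{conj:DT}; it only sketches a possible approach at the very end of Section~\ref{sec:app}. That sketch stays entirely within $\RP^3$: remove a tubular neighborhood of the core sphere $S^2 \subset DTS^2$, so that $\Sigma$ becomes a cobordism in $I\times\RP^3$ from $-K$ to the projective half-twist link $H_p$ (where $2p$ is the geometric intersection number of $\Sigma$ with $S^2$), and then apply Theorem~\ref{thm:BW}. The $p$-dependence of $\chi(\Sigma\cap(I\times\RP^3))=1-2g-2p$ cancels exactly against the conjectured value $s(H_p)=1-2p$, leaving $-s(K)\le 2g$. The only missing ingredient in that route is the single computation $s(H_p)=1-2p$, which is currently unknown.

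\textbf{Your proposal is a genuinely different route, and it has a structural gap.} Passing to the double branched cover $\CP\setminus B^4\to DTS^2$ along the core, tubing $\widetilde\Sigma$ off $\CP^1$, and applying Rasmussen's $S^3$ bound to $\tK$ is all geometrically sound; in particular you correctly note that $[\Sigma]=0$ forces $K$ to be class-0 and $m$ to be even, and the Euler characteristic bookkeeping $\chi(\widetilde\Sigma)=2\chi(\Sigma)-m$ and the $m/2$ tubes is right. But what you end up with is an estimate on $s_{S^3}(\tK)$ of the form $-s(\tK)\le 4g + 4p + O(1)$, because the tubed surface has $\chi(\widetilde\Sigma')=2-4g-4p$. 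To deduce $-s(K)\le 2g$ you would need your ``transfer'' to supply an inequality of the shape $-s(K)\le -s(\tK)/2 - 2p + \text{const}$. That cannot come from a filtered chain map $LC_{\mc C}^*(D)\to LC^*_{S^3}(\tD)$: such a map is built from $D$ and $\tD$ alone and therefore has a \emph{fixed} filtration degree, independent of the surface $\Sigma$ and in particular independent of $p$. The paper's own route avoids this precisely because the quantity that absorbs the $p$-dependence is $s(H_p)$, a $p$-indexed family of link invariants, not a single transfer constant.

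\textbf{A secondary caution.} Even setting aside the $p$-dependence, a transfer with a fixed filtration shift would give a universal linear inequality between $s_{\RP^3}(K)$ and $s_{S^3}(\tK)$. The paper's computations (Propositions~\ref{prop:local links}, \ref{prop:disj unions and conn sums}, \ref{prop:abc}) establish $s(\tK)=2s(K)-1$ only for local, $U_1\#(\cdot)$, and positive links, and Examples~\ref{ex:K1}--\ref{ex:K2} (for class-1 knots) show that the difference $s(\tK)-2s(K)$ is not constant in general, taking both positive and negative values. So a clean chain-level transfer that is simultaneously filtered, degree-homogeneous, and Lee-generator-preserving is not to be expected. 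This is not ``nearly all of the work''---it is an obstacle to the whole strategy. If you want to pursue this circle of ideas, the natural target is the one the paper isolates: prove $s(H_p)=1-2p$, which is a computation in $\RP^3$ and does not require any comparison with $s_{S^3}(\tK)$.
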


This is similar in spirit to Theorem 1.15 in \cite{MMSW}, which gives genus bounds for null-homologous  surfaces in $S^1 \times B^3$ and $B^2 \times S^2$ with boundary a knot $K \subset S^1 \times S^2$; and also to Corollary 1.9 in \cite{MMSW}, which gives  genus bounds for null-homologous surfaces in $(\#^t \overline{\mathbb{CP}^2}) \setminus B^4$  with boundary a knot $K \subset S^3.$ Those results are proved by considering the intersection of the surface with the ``core'' of the 4-manifold, and obtaining a cobordism between $K$ and a link from a fixed infinite family. The same idea can be used to approach Conjecture~\ref{conj:DT}. In this case the surface in $DTS^2$ gives a cobordism in $I \times \RP^3$  relating $-K$ to a link $H_p \subset \RP^3$ obtained as the projective closure of a positive half-twist on $p$ positively oriented and $p$ negatively oriented strands. (A picture of $H_2$ is given in Figure~\ref{fig:H2}.) Conjecture~\ref{conj:DT} would follow from Theorem~\ref{thm:BW} if one could prove that $s(H_p) = 1-2p$. 
\begin{figure}
{
   \def\svgwidth{1.5in}
   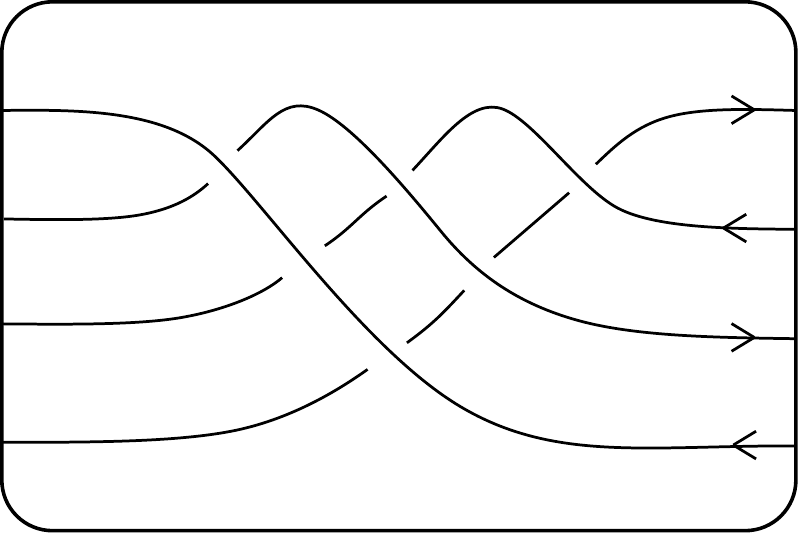
}
\caption{The projective closure $H_2$ of the half-twist on $4$ balanced strands.}
\label{fig:H2}
\end{figure}

% ----------------------------------------------------------------
\bibliographystyle{custom}
\bibliography{biblio}

\end{document}